\let\footnote=\endnote
\gdef\AQ#1{}
\gdef\CQ#1{}
\algrenewcommand\alglinenumber[1]{\tiny #1:}
\algrenewcommand\ALG@beginalgorithmic{\small} 
\algnewcommand{\LineComment}[1]{\State \(\triangleright\) #1} 
\algrenewcommand{\algorithmiccomment}[1]{\hskip3em$\triangleright$ #1} 
\renewcommand{\Comment}[1]{\hskip3em$\triangleright$ #1} 
\newcommand{\nld}{n_l}
\newcommand{\x}{x}
\newcommand{\xvec}{\mathbf{\x}}
\newcommand{\xvecset}{\mathcal{X}}
\newcommand{\indexset}{\mathcal{J}}
\newcommand{\nfl}{n_f}
\newcommand{\y}{y}
\newcommand{\yvec}{\mathbf{\y}}
\newcommand{\Al}{\mathbf{A}}
\newcommand{\Bf}{\mathbf{B}}
\newcommand{\rhsf}{\mathbf{b}}
\newcommand{\Gx}{\mathbf{G}_{\xvec}}
\newcommand{\hvec}{\mathbf{h}}
\newcommand{\state}{\mathbf{s}}
\newcommand{\stateset}{\mathcal{S}}
\newcommand{\fstateset}{\hat{\mathcal{S}}}
\newcommand{\issvec}{\upgamma}
\newcommand{\iss}{\gamma}
\newcommand{\cvec}{\mathbf{c}}
\newcommand{\dvec}{\mathbf{d}}
\newcommand{\al}{\mathbf{a}}
\newcommand{\z}{z}
\newcommand{\extval}{\mathscr{G}}
\newcommand{\polynet}{\mathscr{P}}
\newcommand{\net}{\mathcal{N}}
\newcommand{\nodeset}{\mathscr{U}}
\newcommand{\edgeset}{\mathscr{E}}
\newcommand{\edgevalvec}{\upnu}
\newcommand{\edgeval}{\nu}
\newcommand{\node}{u}
\newcommand{\edge}{e}
\newcommand{\initnode}{r}
\newcommand{\termnode}{t}
\newcommand{\flow}{\omega}
\newcommand{\flowvec}{\upomega}
\newcommand{\outedges}{\edgeset^{+}}
\newcommand{\inedges}{\edgeset^{-}}
\newcommand{\Gy}{\mathbf{G}_{\yvec}}
\newcommand{\symgroup}{\hat{\stateset}}
\newcommand{\budget}{B}
\newcommand{\Node}{U}
\newcommand{\nodelb}{\ell}
\newcommand{\nodelbvec}{\boldsymbol{\ell}}
\newcommand{\nodeub}{\mathfrak{r}}
\newcommand{\nodeubvec}{\boldsymbol{\mathfrak{r}}}
\newcommand{\ncons}{m}
\newcommand{\nf}{n_f}
\newcommand{\yvecset}{\mathcal{Y}}
\newcommand{\val}{\phi}
\newcommand{\zeros}{\mathbf{0}}
\newcommand{\vale}{\overline{\val}}
\newcommand{\statevec}{\mathbf{s}}
\newcommand{\augindexset}{\hat{\indexset}}
\newcommand{\MibS}{\texttt{MibS}}
\newcommand{\BC}{\texttt{B\&C}}
\newcommand{\DD}{\texttt{DD}}
\newcommand{\DDMaxMin}{\texttt{DD+MaxMin}}
\newcommand{\DDMaxMinSep}{\texttt{DD+MaxMin+Sep}}
\newcommand{\DDMibS}{\texttt{MibS+DD}}
\newcommand{\DDBC}{\texttt{B\&C+DD}}
\begin{document}
	
\def\COPYRIGHTHOLDER{INFORMS}%
\def\COPYRIGHTYEAR{2024}%
\def\DOI{\fontsize{7.5}{9.5}\selectfont\sf\bfseries\noindent no-doi}

\RUNAUTHOR{Lozano et~al.} %

\RUNTITLE{Network Relaxations for Discrete Bilevel Optimization under Linear Interactions}

\TITLE{Network Relaxations for Discrete Bilevel Optimization under Linear Interactions}


\ARTICLEAUTHORS{

	\AUTHOR{Leonardo Lozano}
	\AFF{Operations, Business Analytics \& Information Systems, University of Cincinnati \\ 2925 Campus Green Drive, Cincinnati, OH 45221\\ 
		\EMAIL{leolozano@uc.edu} \URL{}}
		
	\AUTHOR{David Bergman}
	\AFF{Department of Operations and Information Management, University of Connecticut \\2100 Hillside Rd, Storrs, CT 06268\\ \EMAIL{david.bergman@uconn.edu}
	}

	\AUTHOR{Andre A. Cire}
	\AFF{Dept. of Management, University of Toronto Scarborough and Rotman School of Management \\ Toronto, Ontario M1C 1A4, Canada
	\\ \EMAIL{andre.cire@rotman.utoronto.ca} 
		}


}
	

	\ABSTRACT{
We investigate relaxations for a class of discrete bilevel programs where the interaction constraints linking the leader and the follower are linear. Our approach reformulates the upper-level optimality constraints by projecting the leader's decisions onto vectors that map to distinct follower solution values, each referred to as a state. Based on such a state representation, we develop a network-flow linear program via a decision diagram that captures the convex hull of the follower's value function graph, leading to a new single-level reformulation of the bilevel problem. We also present a reduction procedure that exploits symmetry to identify the reformulation of minimal size. For large networks, we introduce parameterized relaxations that aggregate states by considering tractable hyperrectangles based on lower and upper bounds associated with the interaction constraints, and can be integrated into existing mixed-integer bilevel linear programming (MIBLP) solvers. 
Numerical experiments suggest that the new relaxations, whether used within a simple cutting-plane procedure or integrated into state-of-the-art MIBLP solvers, significantly reduce runtimes or solve additional benchmark instances. Our findings also highlight the correlation between the quality of relaxations and the properties of the interaction matrix, underscoring the potential of our approach in enhancing solution methods for structured bilevel optimization instances.
}



\AREAOFREVIEW{Optimization.}

\KEYWORDS{Bilevel optimization, mixed-integer programming, network/graphs.}
	
	%
	
\maketitle


\section{Introduction}
\label{sec:introduction}


Often prescriptive optimization problems take the form of a two-player hierarchical process, i.e., where decisions made at an upper level by a so-called leader directly impact the possible choices that can be made at a lower level by a follower. Such lower level decisions, in turn, also influence outcomes and pay-offs at the higher level, resulting in challenging bilevel formulations that capture the complex sequential interplay between the leader and the follower. Bilevel optimization has been a pervasive research topic due to its broad array of applications, with examples in logistics design \citep{qi2022sequential}, pricing \citep{labbe1998bilevel,chen2018complexity}, energy markets \citep{arroyo2010bilevel}, and transportation \citep{kinay2023charging}, to name a few.

In this paper, we investigate relaxations for a general class of discrete bilevel programs where the interaction constraints relating the follower and leader are linear, that is,
\begin{subequations}
	\label{model:bilevel}
	\begin{align}
    \min_{\xvec, \yvec}  
        &\quad 
        f(\xvec, \yvec) \label{bilevel:obj} \\
    \textnormal{s.t.}
        &\quad
            \yvec \in \argmin_{\yvec'} 
            \left\{ 
                g(\yvec') 
                \,\colon\,
                \Al \xvec + \Bf \yvec' \ge \rhsf, \;\; \yvec' \in \yvecset
            \right\}, \label{bilevel:cons:1} \\ 
        &\quad
            \xvec \in \xvecset(\yvec) \cap \mathbb{Z}^{\nld} \label{bilevel:cons:2}.
\end{align}
\end{subequations}
More precisely, $\yvec$ is the follower's decision vector for some feasible set $\yvecset \subseteq \mathbb{R}^{\nf}$, $\nf > 0$;
$\xvec$ is the leader's decision vector for some feasible set $\xvecset(\yvec) \subseteq \mathbb{R}^{\nld}$, $\nld > 0$, which is parameterized by $\yvec$; $\Al \in \mathbb{R}^{\ncons \times \nld}, \Bf \in \mathbb{R}^{\ncons \times \nf}$ are rational coefficient matrices associated with $\ncons > 0$ linear interaction constraints; and $f, g$ are the leader's and follower's objective function, respectively. We refer to \eqref{bilevel:cons:1} and \eqref{bilevel:cons:2} as the follower's subproblem and the leader's constraints, respectively. In particular, the set $\xvecset(\yvec)$ may include additional continuous variables, and the follower's variables $\yvec$ are not required to be integers; i.e., the model imposes that the leader's variables $\xvec$ associated with the interaction constraints are integers.

Structured cases of \eqref{model:bilevel} include classical bilevel problems such as network interdiction \citep{MortonEtal07, cappanera2011, hemmati2014, lozano2017b}, blocking problems \citep{bazgan2011, Mahdavi2014}, and other models where the leader's variables are fully or partially combinatorial \citep{costa2011, Caprara2016, zare2018}. Recent developments within this class have particularly focused on general solution approaches for mixed-integer bilevel linear programs (MIBLP), where $f,g$ are linear, $\yvecset$ is polyhedral, and  
\begin{align}
	\label{cons:linearleader}
	\xvecset(\yvec) = \{ \xvec \in \mathbb{R}^{\nld} \,\colon\, \Gx \xvec + \Gy \yvec \ge \hvec \}
\end{align} 
for given matrices $\Gx,\Gy$ and vector $\hvec$ of appropriate dimensions \citep{fischetti2016intersection,fischetti2017new,tahernejad2020branch,kleinert2021survey}. MIBLP problems remain notoriously difficult to address computationally; in the now standard bilevel benchmark introduced by \cite{fischetti2016intersection}, instances with as many as 29 leader variables are not solved to optimality by state-of-the-art solvers under practical time limits (see \S\ref{sec:numericalstudy}). 

In this work, we propose an alternative but complementary perspective to relax \eqref{model:bilevel} that leverages the inherent symmetry that results from the integrality of $\xvec$. The relaxation can be used to provide optimization bounds or be incorporated directly as an initial model in existing bilevel solvers to strengthen their formulations, particularly those based on mathematical programming. Specifically, our starting point is the equivalent value-function model \eqref{model:vfr} introduced by \cite{fortuny1981representation},
\begin{subequations}
	\label{model:vfr}
	\begin{align}
		\min_{\xvec, \yvec}  
			&\quad 
			f(\xvec, \yvec) \label{vfr:obj} \\
		\textnormal{s.t.}
			&\quad
				\xvec \in \xvecset(\yvec) \cap \mathbb{Z}^{\nld}, \,\, \yvec \in \yvecset, 
					\label{vfr:cons:1}\\
			&\quad
				\Al \xvec + \Bf \yvec \ge \rhsf
					\label{vfr:cons:2}\\
			&\quad
				g(\yvec) \le \val(\xvec), \label{vfr:cons:3}
	\end{align}
\end{subequations}
where 
$
	\val(\xvec) := \min_{\yvec} 
	\left\{ 
		g(\yvec) 
		\,\colon\,
		\Bf \yvec \ge \rhsf - \Al \xvec, \;\; \yvec \in \yvecset
	\right\}
$
is a value function that provides the optimal solution value of the follower's subproblem for a given $\xvec$. In such a model, the leader is optimistic in that it assumes the follower picks the decision $\yvec$ that minimizes $f(\xvec, \yvec)$ for any $\xvec$ if multiple optima to \eqref{bilevel:cons:1} exist. State-of-the-art methods for bilevel optimization typically relax inequality \eqref{vfr:cons:3} to derive the so-called high-point relaxation, which is subsequently strengthened via separation methods that approximate $\val$ sequentially \citep{moore1990mixed, outrata1990numerical, fischetti2017new}, which is now pervasive in current approaches.

We investigate a lifted version of the high-point relaxation that explicitly encodes evaluations of $\val$ into its formulation. Our methodology is based on the observation that the value function $\val$ may be redundant in a large portion of the leader's feasible space $\xvecset(\yvec)$; more precisely,
\begin{align}
	\val(\xvec) = \val(\xvec'), \;\; \forall \xvec,\xvec' \,\,\textnormal{such that}\,\, \Al \xvec = \Al \xvec'.
\end{align}
In other words, it suffices to evaluate $\phi$ only with respect to the linear projection $\{ \statevec \in \mathbb{R}^{\ncons} \colon \statevec = \Al \xvec, \, \forall \xvec \}$ when solving \eqref{model:vfr}. Building upon this principle, our main contributions are as follows.
\begin{enumerate}
	\item Using the discrete structure of $\xvec$, we propose an extended reformulation of $\val$ that constructs the vectors $\Al \xvec$ by considering each component $\x_j$ of $\xvec$ at a time. The partial evaluations of this sequential steps are referred to as ``states'' that define symmetry classes in terms of the evaluations of $\val$.

	\item Through this state-based model, we propose a network encoding of $\val$ that identifies further redundant states via its topological structure. Specifically, the network is a decision diagram that captures the distinct evaluations of $\val$ in terms of $\xvec$ and its terminal nodes, and it is free of symmetry and unique.
	
    \item We obtain a reformulation of \eqref{model:vfr} where the value-function constraint \eqref{vfr:cons:3} is written as a flow model over the network encoding of $\val$, which is convex-hull defining with respect to the feasible solution and value pairs $(\xvec, \val(\xvec))$. We also discuss the size of such reformulations; in particular, if the followers' subproblem is defined by a knapsack constraint, one can obtain an exact single-level model of \eqref{model:vfr} with pseudo-polynomial size in the right-hand side of the constraint.
    
    \item If the network is large, we propose a new class of parameterized high-point relaxations based on aggregating states to generate smaller networks. Specifically, the aggregated recursive model provides a bound of $\val$ at all points $\xvec$ by operating on hyperrectangles of reachable states. We also demonstrate how to strengthen the approximation by solving a sequence of tractable robust optimization models with respect to the follower's subproblem \eqref{bilevel:cons:2}. 
\end{enumerate}

\smallskip
We report experiments comparing our relaxations and solution approaches with respect to two state-of-the-art MIBLP solvers -- MibS by \cite{tahernejad2020branch} and the branch-and-cut model by \cite{fischetti2017new} -- on structured instances and standard bilevel benchmark problems with up to 3,000 variables. Our numerical results suggest that the quality of the proposed high-point relaxations is correlated with properties of the interaction matrix $\Al$, such as sparsity and the range of coefficients. For these cases, the relaxations matched the optimal solution bound for several structured and benchmark instances. We also report runtimes of an exact, straightforward cutting-plane procedure that iteratively improves the proposed relaxation, as well as the performance gains obtained when incorporating the proposed flow models as redundant constraints into the two MIBLP solvers. Specifically, for benchmark instances, the combined relaxation improved the average runtimes of the branch-and-cut model by \cite{fischetti2017new}, the current best-performing solver for this class, by 11.7\%. Further, the cutting-plane approach solved to optimality several challenging instances that presented positive gaps for other solvers, reducing the best average runtime for the benchmark by 16\%.

\medskip
\paragraph{Assumptions.} 

Model \eqref{model:bilevel} imposes that interaction constraints are limited to leader's integer variables, which is a standard condition that ensures optimal solutions are attainable by enumerative branching methods \citep{koppe2010parametric}. Throughout this text, we also consider the following assumptions for the existence of solutions, tractability of the bilevel program, and ease of exposition. 

\begin{assumption} 
    \label{as:subproblemcalls}
    The evaluation of $\val(\xvec)$ for a given $\xvec$ is computationally inexpensive.
\end{assumption}
Assumption \ref{as:subproblemcalls} indicates that, even if the follower's subproblem belongs to a difficult complexity class (e.g., NP-Hard), we are equipped with solvers that can evaluate or approximate $\val$ in a short computational time for instances of interest.  This is a fundamental primitive of optimization methods based on separating infeasible solutions of the high-point relaxation \citep{fischetti2017new}, which we also rely in the development of our methodology.  Thus, our primarily focus is on reducing the calls to $\val$ when evaluating \eqref{model:vfr}.

\begin{assumption} 
    \label{as:binarylinearinteger} 
	In all our formulations,
    \begin{itemize}    
		\item[(a)] The bilevel problem \eqref{model:bilevel} is an MIBLP where $\xvec$ are binaries.  
		\item[(b)] If the follower's subproblem \eqref{bilevel:cons:1} is feasible for $\xvec' \in \{0,1\}^{\nld}$, then it is also bounded for $\xvec'$, i.e., an optimal $\yvec^*$ that solves \eqref{bilevel:cons:1} exists.	
    \end{itemize}
\end{assumption}
Bilevel problems satisfying Assumption \ref{as:binarylinearinteger}-(a) are such that their high-point relaxations can be solved using standard mixed-integer linear programming models. Assumption \ref{as:binarylinearinteger}-(b) imposes that the follower's subproblems are bounded for any $\xvec$. These conditions capture the large majority of benchmarks in bilevel optimization, and we focus on such problem classes for our methodological development and computational results. That is, we are particularly interested in formulations that could also lead to strong linear programming bounds. The assumption of binary leader's variables, in particular, facilitates exposition since problems with integer variables are easily transformed into 0-1 problems using a standard binary expansion.

\medskip 
\paragraph{Organization.} The paper is organized as follows. Section \ref{sec:relatedwork} extends our literature review in bilevel optimization and associated relaxations in mathematical programming. Section \ref{sec:extension} describes an extended reformulation of \eqref{model:vfr} based on the graph of the function $\val$ and presents an state-based models. Section \ref{sec:value} introduces the network encoding and the underlying convex-hull reformulation as a flow model. Section \ref{sec:networkconstruction} develops a general methodology to build the network encoding and remove symmetric nodes. Section \ref{sec:approximation} develops parameterized approximation strategies to generate smaller reformulations. Section \ref{sec:numericalstudy} presents a numerical study comparing and integrating the approach into existing MIBLP solvers. Finally, we conclude in Section \ref{sec:conclusion} with extensions, trade-offs, and future work. Proofs are included in the Appendix \cite{app:proofs}.

\medskip 
\paragraph{Notation.} In this paper, vectors and matrices are represented in bold, and scalars in regular font. For ease of notation, we fix $\val(\xvec) = +\infty$ if the follower's subproblem is infeasible for $\xvec$. The $j$-th column of $\Al$ is denoted by $\al_{j}$, and $\zeros$ is the vector or matrix of zeros in a suitable dimension. We also refer to $\val(\xvec)$ as the follower's value of $\xvec$. A vector $\mathbf{v}$ with indices defined by a set $S$ is written as $\mathbf{v} = \{v_i\}_{i \in S}$.


\section{Related Work}
\label{sec:relatedwork}

Bilevel optimization was first introduced in the seminal work by \cite{bracken1973mathematical} and has now become a pervasive research area in operations and economics. We refer to \cite{dempe2015bilevel} and \cite{colson2007overview} for methodological foundations, recent surveys, and applications.

This work investigates general-purpose exact and relaxation mechanisms to strengthen the representation of the follower's subproblem \eqref{bilevel:cons:1} in the leader's model. 
For scenarios where the follower is a linear programming, \cite{bard1982linear} introduced the now standard  dualize-and-combine method, which reformulates \eqref{bilevel:cons:1} via duality and related Karush-Kuhn-Tucker (KKT) optimality conditions. Generalizations of this approach have been of theoretical interest, including, e.g., the sequential optimality conditions by \cite{luo1996mathematical}, convex lower-level encoding by \cite{dutta2006bilevel}, and semi-definite approximations for polynomial bilevel problems by \cite{nie2017bilevel}. Dualize-and-combine, however, often requires convexity and special follower's problem structure to be computationally effective. In this work, we circumvent this requirement by explicitly enumerating the follower's subproblem values as a function of the leader's variables, exploiting symmetry and approximations to reduce the size of the underlying representation. 

A primary focus of our contribution is in providing stronger and complementary reformulations that can also be leveraged by existing MIBLP techniques. Methods based on mathematical programming have been central in both novel theoretical developments and computationally scalable procedures for bilevel optimization. \cite{DeNegreRalphs09} proposed the first branch-and-cut approach which uses the high-point relaxation as a starting point, and has now evolved to one of the state-of-the-art MibS solver \citep{tahernejad2020branch}. Recent bilevel research generalizes classical cuts to strengthen to their bilevel variants, such as the disjunctive cuts by \cite{audet2007disjunctive} and strong-duality cuts by \cite{kleinert2021closing}. Another seminal stream of works are by \cite{fischetti2016intersection,fischetti2017new}, who develop a high-performing MIBLP solver by leveraging a large class of intersection cuts and other families of inequalities. We refer to the survey by \cite{kleinert2021survey} for other related cuts and mathematical programming techniques. In this paper, we provide an approximate reformulation of the follower's subproblem \eqref{bilevel:cons:1} that projects the graph of the value function $\val$ onto the leader's decision. Such a reformulation is linear and of parameterized size, and can be incorporated in any branching search that uses existing cuts.

The proposed approximation is generated via a decision-diagram encoding of the value function $\val$. In the context of optimization, decision diagrams are network representations of the state-transition graph of a dynamic program; we refer to \cite{bergman2016,castro2022decision} for concepts and a survey of applications. Specifically, decision diagrams have been applied to represent or approximate the set of feasible solutions of a combinatorial optimization problem via a network-flow model. For discrete bilevel and general two-stage problems, \citet{cspDD}, \citet{Sebas}, and \citet{DDCarvalho} exploit this reformulation to rewrite the follower's problem as a shortest-path linear program, which now becomes amenable to a dualize-and-combine approach under certain classes of interaction constraints, such as interdiction inequalities where the leader ``blocks'' arcs of the resulting shortest-path formulation.

This work assumes no underlying structure of the follower's subproblem except for general linearity of interactions; e.g., the follower's variables can be continuous. To this end, the proposed flow model represents the domain and evaluations of $\val$ instead of feasible solutions, i.e., the network captures functional values as opposed to feasibility space. To our knowledge, the closest related work is by \cite{bergman2018discrete}, which uses state aggregation to approximate structured sums of non-linear functions (e.g., submodular sums). In our setting, ``states'' capture both the slacks of the interaction constraints and symmetry associated with follower's subproblem values. \cite{brotcorne2013one} similarly proposes a notion akin to a state to provide smaller single-level reformulations when the follower's problem is a knapsack, specifically by considering weight intervals that lead to the same follower's solution values. The state-based model used as a starting-point for our derivations in \S\ref{subsec:statebased} can be perceived as a generalization of this concept for general bilevel programs, where we use a network-flow model to extract a convex-hull model of the graph of $\val$.


\section{Extended Value Function Formulations}
\label{sec:extension}

Let $\extval \subseteq \mathbb{R}^{\nld+1}$ be the graph of the function $\val$, that is, the set of leader's decisions extended with their follower's values whenever \eqref{bilevel:cons:1} is feasible:
\begin{align*}
	\extval := \{ \, (\xvec, \z) \in \{0,1\}^{\nld} \times \mathbb{R}\, \colon \, \z = \val(\xvec), \, \z < +\infty \}.
\end{align*}

We operate on the following reformulation of the value function model \eqref{model:vfr}, which imposes that both the vector $\xvec$ and an upper bound on its follower's value are in the graph $\extval$.
\begin{subequations}
	\label{model:vfr_ext}
	\begin{align}
		\min_{\xvec, \yvec, \z}  
		&\quad 
			f(\xvec, \yvec) \label{vfr_ext:obj} \\
		\textnormal{s.t.}
		&\quad
			\textnormal{\eqref{vfr:cons:1}-\eqref{vfr:cons:2}}, 
				\label{vfr_ext:cons:1} \\
		&\quad
			g(\yvec) \le \z, 
				\label{vfr_ext:cons:2} \\
		&\quad
			(\xvec, \z) \in \extval.
				\label{vfr_ext:cons:3}
	\end{align}
\end{subequations}

Our objective in this work is to investigate extended polyhedral formulations to the set $\extval$ to solve or approximate the extension \eqref{model:vfr_ext} efficiently, with a focus on strong mathematical programming formulations in terms of their linear relaxation. Of special relevance to our approach are formulations that capture symmetric classes of the leader's decision with respect to $\val$, i.e., where a solution in a class leads to the same follower's objective value as other solutions in that class, leading to more compact models and approximations. 

\subsection{State-based model}
\label{subsec:statebased}

An explicit strategy to expose symmetry in the leader's variable set is to define
\begin{align*}
	\vale(\state) := \min_{\yvec} 
	\left\{ 
		g(\yvec) 
		\,\colon\,
		\Bf \yvec \ge \rhsf - \state, \;\; \yvec \in \yvecset
	\right\},
\end{align*}
which is the alternative value function $\vale \colon \mathbb{R}^{\ncons} \rightarrow \mathbb{R} \cup \{+\infty\}$ written in terms of the right-hand side of the interaction constraints. It follows that $\val(\xvec') = \vale(\Al \xvec')$ for any leader's decision $\xvec' \in \{0,1\}^{\nld}$; thus, to solve the extension \eqref{model:vfr_ext}, it suffices to evaluate $\vale$ only on the set of vectors generated via the linear operator $\Al$, i.e., 
\begin{align}
	\label{eq:statedef}
	\stateset := \{\state \in \mathbb{R}^{\ncons} \colon \state = \Al \xvec, \, \xvec \in \{0,1\}^{\nld}, \, \vale(\state) < +\infty\}.
\end{align} 
Each $\state \in \stateset$ can be perceived as a ``state'' in that vectors $\xvec$ that yield the same $\state$ also lead to the same follower's objective. Since $\stateset$ is finite, one can represent $\extval$ via standard indicator variables $\issvec$ capturing the active state $\state$ at a feasible solution. Namely,
\begin{align*}
	\extval^{\issvec} 
	:= 
	\left \{ 
		(\xvec, \z) \in \{0,1\}^{\nld} \times \mathbb{R}
		\, \colon\, \exists \,\, \issvec \,\, \textnormal{s.t.} 
		\quad
			\def\arraystretch{1.4}
			\begin{array}{|ll}
				\quad \Al \xvec = \sum_{\state \in \stateset} \state \cdot \iss_{\state}, \\ 
				\quad \sum_{\state \in \stateset} \iss_{\state} = 1, \\
				\quad \iss_{\state} \in \{0,1\}, \,\,\,\, \forall \state \in \stateset, \\
				\quad z = \sum_{\state \in \stateset} \vale(\state) \cdot \iss_{\state}
			\end{array}
	\right \}.
\end{align*}
In the representation above, $\iss_{\state} = 1$ if and only if $\Al \xvec = \state$ for exactly one state $\state \in \stateset$, as prescribed by the first, second, and third constraints. From the last constraint, $\extval^{\iss}$ is a valid representation of $\extval$ since $\z = \sum_{\state \in \fstateset \colon \state = \Al \xvec'} \vale(\state) \cdot \iss_{\state} = \vale(\Al \xvec') = \val(\xvec')$ for any $\xvec' \in \{0,1\}^{\nld}$.

Under the MIBLP Assumption \ref{as:binarylinearinteger}, the extension \eqref{model:vfr_ext} written in terms of $\extval^{\issvec}$ is a single-level mixed-integer linear program with an additional $\ncons+2$ constraints and $|\stateset|$ binary variables with respect to \eqref{model:vfr}. The cardinality of $\stateset$ is bounded above by $|\extval|$ because each $\xvec$ can only map to a single state $\state \in \stateset$. In other words, the resulting size of \eqref{model:vfr_ext} written in terms of $\extval^{\iss}$ -- that is, its number of variables plus constraints -- would never be worse than the analogous model obtained when enumerating values $\val(\xvec)$ for each vector $\xvec$ instead. 

In structured settings, $\stateset$ can be significantly smaller than $\extval$. A common case is when the follower's subproblem models a classical knapsack system,
\begin{align}
	\label{model:fknapsack}
	\val^{K}(\xvec) := \max_{\yvec} 
	\left\{ 
		\cvec^{\top} \yvec 
		\,\colon\,
		\dvec^{\top} \yvec \le b - \al^{\top} \xvec, \;\; \yvec \in \{0,1\}^{\nf}
	\right\},
\end{align}
where $\al \in \mathbb{R}^{\nld}$ and $\cvec, \dvec \in \mathbb{R}^{\nf}$ are positive integer vectors, and $b$ is a positive integer scalar. Such a bilevel problem formulates settings where the leader wishes to limit the follower's budget, with applications in resource allocation and scheduling \citep{brotcorne2013one, ghatkar2023solution, carvalho2023bilevel}. In this formulation, the state set $\stateset$ models the distinct possible values of $\al^{\top} \xvec$ and is such that $\stateset \subseteq \{0, 1, \dots, b \}$, as all input parameters are positive integers and $\ncons = 1$. Thus, $\stateset$ and $\extval^{\issvec}$ are pseudo-polynomial in the right-hand side $b$ irrespective of the dimension of $\xvec$. This implies that \eqref{model:vfr_ext} written in terms of $\extval^{\iss}$ is a compact single-level reformulation of the original bilevel problem \eqref{model:bilevel} when $b \in \mathcal{O}(\nld)$. 

Nonetheless, even when the number of states is small, one of the challenges of the indicator-based formulation $\extval^{\issvec}$ is that the resulting linear programming relaxation of \eqref{model:vfr_ext} can be arbitrarily weak. We illustrate this in Example \ref{ex:continousexact} for a continuous follower's subproblem.
\begin{example}
	\label{ex:continousexact}
	Suppose $\nld = 2$ and consider the follower's subproblem			
	\begin{align*}			
		\val(\xvec) := \min_{\yvec} 
		\left \{
			- \y_2 
			\colon
				y_1 = 5 \x_1 + 5 \x_2, \, y_2 \le 20\y_1, \, y_2 \le 199 - 19.8y_1
		\right \}.
	\end{align*}	
Since $\Al \xvec = -5x_1 -5x_2$ and $\xvec \in \{0,1\}^2$, the state set is $\stateset = \{0, -5, -10\}$ with $\vale(0) = 0$, 
$\vale(-5) = -100$, and $\vale(-10) = -1$. Thus, $\extval^{\issvec}$ is defined by the system
\begin{align*}
	&5x_1 + 5x_2 = 0\,\iss_{0} + 5\,\iss_{5} + 10 \, \iss_{10}, \\ 
	&\iss_{0} + \iss_{5} + \iss_{10}  = 1, \\
	&z = 0 \, \iss_{0} + -100 \iss_{5} -1 \iss_{10}.
\end{align*}
For $(x_1,x_2) = (0, 1)$, we must have $\iss_{5} = 1$ and $z = -100$. However, note that $\iss_{0} = 0.5$, $\iss_{10} = 0.5$, and $z = -0.5$ is feasible for the continuous relaxation obtained by dropping the integrality constraints on $\issvec$ and imposing $\iss_{0}, \iss_{5}, \iss_{10} \in [0,1]$. Thus, the value function reformulation for an instance of \eqref{model:vfr_ext} defined by
\begin{align*}
	\min_{\xvec \in \{0,1\}^2, \yvec, \z}  
	&\quad 
		x_1 + y_2 \\
	\textnormal{s.t.}
	&\quad
		x_1 + x_2 = 1, \, x_2 \ge 1, \\
	&\quad
		-y_2 \le \z, \\
	&\quad
		(\xvec, \z) \in \extval^{\iss},
\end{align*}
has a linear relaxation value of $0.5$ in contrast to its optimal value of $100$. We can replace $\vale(-5)$ by any negative number (with small adjustments to the constraints of $\vale(\state)$) to obtain any arbitrarily large gap.
\hfill $\square$
\end{example}


\section{Network Representation of Value Functions}
\label{sec:value}

In this section, we introduce a network model of the value function $\val$ that conceptually generalizes the state-based model from \S \ref{subsec:statebased} and will serve as the basis for our mathematical programs for the extension \eqref{model:vfr_ext}. Specifically, we define our network representation in \S\ref{subsec:valdd}, and show in \S\ref{subsec:extformulation} that any such a representation provides a network-flow linear program that characterizes the convex hull of $\extval$, that is, a polyhedral model such that its extreme points have a one-to-one mapping with the points $(\xvec, \val(\xvec)) \in \extval$. 

\subsection{Value Networks}
\label{subsec:valdd}

Let $\indexset := \{1, 2, \dots, \nld\}$ be the set of leader's variable indices, and define $\augindexset := \indexset \cup \{\nld+1\}$. We design a network model for $\extval$ in the form of a decision diagram that maps the domain and evaluations of $\val$ through its paths and terminal values, as follows.

\begin{definition}{\textsc{(Value Networks.)}}
	\label{def:valuedd}
	A value network $\net = (\nodeset, \edgeset, \edgevalvec)$ for $\val$ is a directed acyclic multi-terminal graph with node set $\nodeset$, edge set $\edgeset$, and terminal values $\edgevalvec$ satisfying properties (A)-(C) below.

	\begin{enumerate}[label=(\Alph*)]
		\item The node set $\nodeset$ is the union of $\nld+1$ disjoint subsets $\nodeset_1, \nodeset_2, \dots, \nodeset_{\nld+1}$, where $\nodeset_{1} = \{\initnode\}$ for a fixed root node $\initnode$ and each $\node \in \nodeset_{\nld+1}$ is referred to as a terminal node.
		\item An edge $\edge = (\node, \node', \x) \in \edgeset$ connects node $\node \in \nodeset_{j}$ and node $\node' \in \nodeset_{j+1}$ for some $j \in \indexset$ and is associated with a binary label $\x \in \{0,1\}$. No two outgoing edges of a node $u$ share the same label, i.e., $\x \neq \x'$ for any node $u \in \nodeset_{j}$ and pair $(\node, \node', \x), (\node, \node'', \x') \in \edgeset$, $j \in \indexset$. 
		\item Each terminal node $\node \in \nodeset_{\nld+1}$ is associated with a value $\edgeval_{\node} \in \mathbb{R}$. In particular, $(\xvec, \val(\xvec)) \in \extval$ if and only if there exists a path $(\node_1, \node_2, \x_1), (\node_2, \node_3, \x_2), \dots, (\node_{\nld}, \node_{\nld+1}, \x_{\nld})$ such that $\node_{j} \in \nodeset_{j}$ for all $j \in \augindexset$ and $\edgeval_{\node_{\nld+1}} = \val(\xvec)$. In other words, a path from the root to a terminal node maps to a leader's decision $\xvec = (\x_{1}, \dots, \x_{\nld})$ and corresponds to a terminal value equal to $\val(\xvec)$, and vice-versa. \hfill $\square$ 
	\end{enumerate}
\end{definition}

Given a value network $\net$, the follower's value $\val(\xvec)$ of a decision vector $\xvec$ is  obtained by following the unique root-terminal path with labels $\x_1, \x_2, \dots, \x_{\nld}$ and extracting the associated terminal value $\edgeval_{\node}$. Example \ref{ex:knapsack} depicts an example of such construction, also illustrating that $\val$ may admit multiple value networks satisfying Definition \ref{def:valuedd}.

\begin{example}
	\label{ex:knapsack}
	Suppose $\nld = 3$ and consider the follower's subproblem
	\begin{subequations}
		\label{eq:multiknapsackexample}
		\begin{align}			
			\val(\xvec) := \min_{\yvec \in \{0,1\}^2} 
				&\quad 
					-5 \y_1 + 3\y_2 \\
			\textnormal{s.t.} 
				&\quad 
					-3\y_1 - \,\,\,\y_2 \ge -5 + \x_1 + \x_2 + \x_3, \\
				&\quad
					-4\y_1 + 2\y_2 \ge -4 + 2\x_3.
		\end{align}
	\end{subequations}

	Figure \ref{fig:knap_dd} depicts a value networks for $\val$ above, where the $j$-th layer of the graph represents the node set $\nodeset_{j}$, and solid and dashed edges associate with labels $\x_{j} = 0$ and $\x_{j} = 1$, respectively. For example, suppose $\xvec' = (\x'_1, \x'_2, \x'_3) = (1, 1, 0)$; this corresponds to the subproblem
	$$
		\val((1,1,0)) = \min_{\yvec \in \{0,1\}^2} \left\{ -5 \y_1 + 3\y_2 \colon -3\y_1 - \y_2 \ge -3, \, -4\y_1 + 2\y_2 \ge -4 \right\}
	$$
	which has an optimal solution $\yvec^* = (1, 0)$ with value $-5$. This solution maps to the path $( (\initnode, u_{12}, 1), (u_{12}, u_{23}, 1), (u_{23}, u_{35}, 0))$ in the network. Figures \ref{fig:knap_reduction}-(a) and \ref{fig:knap_reduction}(b) also present two alternative value networks for $\val$; in particular, note that the vector $(1,1,0)$ maps to the path $( (\initnode, u_{12}, 1), (u_{12}, v_{21}, 1), (v_{22}, v_{31}, 0))$ in Figure \ref{fig:knap_reduction}-(b), ending in a terminal node with a value of $-5$. \hfill $\blacksquare$

    \begin{figure}[t]
        \begin{center}

            \tikzstyle{main node} = [circle,fill=gray!50,font=\scriptsize, inner sep=1pt]            
            \tikzstyle{text node} = [font=\scriptsize]
            \tikzstyle{arc text} = [font=\scriptsize]
			\tikzstyle{zero arc} = [draw,dashed, line width=0.5pt,->]
			\tikzstyle{one arc} = [draw,line width=0.5pt,->]
					
            \begin{tikzpicture}[->,>=stealth',shorten >=1pt,auto,node distance=1cm,
                thick]        
                \tikzstyle{node label} = [circle,fill=gray!20,font=\scriptsize, inner sep=2pt]


    \node[node label, label={[xshift=0.35em, yshift=0.5em, font=\scriptsize] left:$(0,0)$}] (r) at (-1.5375,0) {$\;\initnode\;$};

    \node[node label, label={[xshift=0.35em, yshift=0.5em, font=\scriptsize] left:$(0,0)$}] (u11)  at (-3.475,-1.25)  {$u_{11}$};
                
    \node[node label, label={[xshift=-0.35em, yshift=0.5em, font=\scriptsize] right:$(-1,0)$}] (u12)  at (0.4,-1.25)  {$u_{12}$};


    \node[node label, label={[xshift=0.35em, yshift=0.5em, font=\scriptsize] left:$(0,0)$}] (u21)  at (-5.5,-2.5) {$u_{21}$};
    
    \node[node label, label={[xshift=0.35em, yshift=0em, font=\scriptsize] left:$(-1,0)$}] (u22)  at (-1.5375,-2.5) {$u_{22}$};
        
    \node[node label, label={[xshift=-0.35em, yshift=0.5em, font=\scriptsize] right:$(-2,0)$}] (u23)  at (2.25,-2.5) {$u_{23}$};


    \node[node label, label={[xshift=0.35em, yshift=0.5em, font=\scriptsize] left:$(0,0)$}, label={[color=blue!50!black] south:$-5$}] (u31)  at (-6.5,-3.9) {$u_{31}$};

    \node[node label, label={[xshift=0.35em, yshift=0.5em, font=\scriptsize] left:$(0,-2)$}, label={[color=blue!50!black] south:$-2$}] (u32)  at (-4.5,-3.9) {$u_{32}$};

    \node[node label, label={[xshift=0.35em, yshift=0.5em, font=\scriptsize] left:$(-1,0)$}, label={[color=blue!50!black] south:$-5$}] (u33)  at (-2.5,-3.9) {$u_{33}$};

    \node[node label, label={[xshift=0.35em, yshift=0.5em, font=\scriptsize] left:$(-2,-2)$}, label={[color=blue!50!black] south:$0$}] (u34)  at (-0.4,-3.9) {$u_{34}$};				

    \node[node label, label={[xshift=0.35em, yshift=0.5em, font=\scriptsize] left:$(-2,0)$}, label={[color=blue!50!black] south:$-5$}] (u35)  at (1.5,-3.9) {$u_{35}$};				

    \node[node label, label={[xshift=-0.35em, yshift=0.5em, font=\scriptsize] right:$(-3,-2)$}, label={[color=blue!50!black] south:$0$}] (u36)  at (3,-3.9) {$u_{36}$};				
;

    \path[every node/.style={font=\sffamily\small}]
    (r)                     
        edge[zero arc] (u11)
        edge[one arc] (u12)
    (u11)
        edge[zero arc] (u21)
        edge[one arc] (u22)
    (u12)
        edge[zero arc] (u22)
        edge[one arc] (u23)	
    (u21)
        edge[zero arc] (u31)
        edge[one arc] (u32)
    (u22)
        edge[zero arc] (u33)
        edge[one arc] (u34)
    (u23)
        edge[zero arc] (u35)
        edge[one arc] (u36)	
    ;

				\node[font=\scriptsize] (x1) at (-7.5, -0.35) {$\x_1$} ;
				\node[font=\scriptsize] (x2) at (-7.5, -1.75) {$\x_2$} ;
				\node[font=\scriptsize] (x3) at (-7.5, -3) {$\x_3$} ;

			\end{tikzpicture}        
		\end{center}
    \caption{A value networks for the follower's subproblem \eqref{eq:multiknapsackexample}. Solid and dashed arcs at the $j$-th layer of the figure correspond to edges with labels $\x_{j} = 0$ and $\x_{j} = 1$, respectively, and numbers below the last-layer nodes are the terminal values. The network is a state-based network with states depicted on each node (\S\ref{subsec:statebasednet}).}
    \label{fig:knap_dd} 
\end{figure}
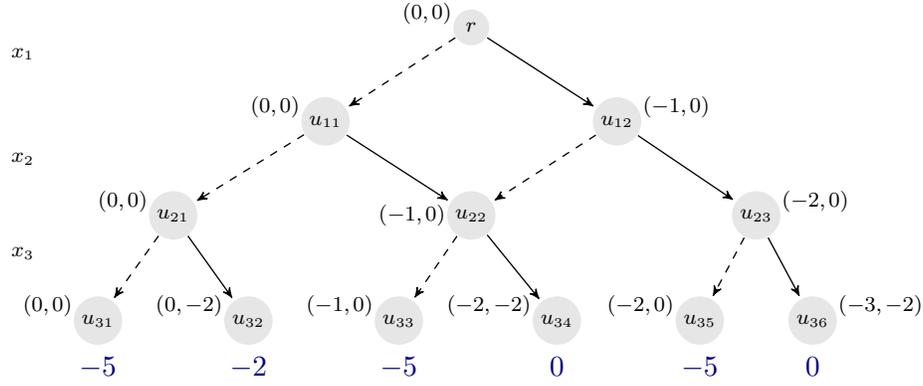
			
\end{example}

In comparison to the state-based model in \S\ref{subsec:statebased}, each terminal node $\node \in \nodeset_{\nld+1}$ defines an equivalence class in terms of its graphical structure. More precisely, all leader's decision vectors $\xvec$ associated with the labels of the paths starting at $\initnode$ and ending at a terminal node $\node$ have the same value $\val(\xvec) = \edgeval_{\node}$. The terminal nodes in Figure \ref{fig:knap_dd}, in particular, have a one-to-one mapping to states $\stateset$. 

Intermediate nodes $\node \in \nodeset \setminus \nodeset_{\nld+1}$, in turn, admit a similar but slightly more general symmetry interpretation, in that any two vectors $\xvec_1, \xvec_2$ extracted from an $(\initnode,\node')$-path label can be \textit{completed} in the same way. That is, for any $\xvec'$ associated with a partial path starting from $\node$ and ending at a terminal $\node' \in \nodeset_{\nld+1}$, 
the solutions $(\xvec_1, \xvec')$ and $(\xvec_2, \xvec')$ have the same follower's value $\edgeval_{\node'}$. For example, in Figure \ref{fig:knap_dd}, all paths ending at node $u_{22}$ have two completions, $\x_{3} = 0$ and $\x_{3} = 1$. Regardless how $u_{22}$ is reached from $\initnode$, all resulting leader's solutions would yield the same follower's value for either $x_3 = 0$ or $x_3 = 1$.

The alternative representations in Figure \ref{fig:knap_reduction} compress the network size by removing all redundancies related to the symmetry of both terminal and intermediate notes discussed above. We discuss network construction procedures and symmetry reduction in Section \ref{sec:networkconstruction}.

\subsection{Convex-hull Formulation}
\label{subsec:extformulation}

We next write a convex-hull formulation of $\extval$ through an edge-based flow model of $\net$ with additional linking constraints for $\xvec$ and $\z$. More precisely, for any node $\node \in \nodeset$, let $\outedges(\node) := \{ (\node, \node', \x) \in \edgeset \colon \node' \in \nodeset, \x \in \{0,1\} \}$ and $\inedges(\node) := \{ (\node', \node, \x) \in \edgeset, \colon \node' \in \nodeset, \x \in \{0,1\} \}$ be the set of outgoing and incoming edges at a $\node$, respectively. We present our formulation in Proposition \ref{prop:convexhull}.

\begin{proposition}{\textsc{(Convex-hull Formulation of the Graph $\extval$.)}}
	\label{prop:convexhull}
	Given a value network $\net$, let $\polynet(\net)$ be the polytope defined by
	\begin{subequations}
		\label{model:extform}
		\begin{align}
		\polynet(\net) := \bigg \{ (\xvec, \z) 
		\colon \exists \flowvec \in \mathbb{R}^{|\edgeset|} \,\textnormal{s.t.}\,
		&\sum_{\edge \in \outedges(\initnode)} \flow_{\edge} = 1, 
				\label{extform:cons:1} \\	
		&\sum_{\edge \in \outedges(\node)} \flow_{\edge} - \sum_{\edge \in \inedges(\node)} \flow_{\edge}
		= 0, &&\forall j \in \augindexset\setminus\{1\},  \forall \node \in \nodeset_{j}, 
			\label{extform:cons:2} \\
		&\sum_{ \substack{ \edge = (\node, \node', 1) \colon \node \in \nodeset_{j}}} \flow_{\edge} = \x_{j}, &&\forall j \in \indexset, 
			\label{extform:cons:3} \\
		&z - \sum_{\node \in \nodeset_{\nld+1}} \sum_{\edge \in \edgeset^{-}(\node)} \edgeval_{\node} \, \flow_{\edge} = 0,
				\label{extform:cons:5} \\	
		&\flowvec \ge 0 \,\,\, \}.
			\label{extform:cons:6}
		\end{align}
	\end{subequations}
	Then, $(\xvec^*, \z^*)$ is an extreme point of $\polynet(\net)$ if and only if $(\xvec^*, \z^*) \in \extval$, i.e., $\polynet(\net) = \textnormal{conv}(\extval)$.
\end{proposition}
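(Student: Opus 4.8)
The plan is to realize $\polynet(\net)$ as the linear image, onto the $(\xvec,\z)$-coordinates, of the unit-flow polytope of the value network, to pin down the vertices of that flow polytope via total unimodularity, and finally to match them with $\extval$ using Definition~\ref{def:valuedd}. Throughout, the flow model \eqref{model:extform} should be read with the terminal nodes in $\nodeset_{\nld+1}$ acting as sinks that absorb the unit of flow (equivalently, one may append a super-sink $\termnode$ of unit demand joined to every terminal), so that the equality system \eqref{extform:cons:1}--\eqref{extform:cons:2} is feasible.

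First I would introduce $F := \{\flowvec \in \mathbb{R}^{|\edgeset|}_{\ge 0} : \eqref{extform:cons:1}, \eqref{extform:cons:2}\}$, the flow-feasible region stripped of the linking equations, and observe that \eqref{extform:cons:3} and \eqref{extform:cons:5} express $(\xvec,\z)$ as a fixed linear function $\ell(\flowvec)$ of $\flowvec$; hence $\polynet(\net) = \ell(F)$. Because $\net$ is layered, flow conservation forces the total flow crossing each layer to equal $1$, so $0 \le \flow_\edge \le 1$ for all $\edge$ and $F$ is a polytope; consequently $\polynet(\net) = \ell(F)$ is a polytope and each of its extreme points is the image under $\ell$ of an extreme point of $F$.

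Next I would show $F$ is integral. Its equality constraints form a subsystem of the node--arc incidence system of the directed acyclic graph $\net$ (with unit supply at $\initnode$ and the demand absorbed at the terminal layer), whose coefficient matrix is totally unimodular, and the right-hand side is integral; since submatrices of totally unimodular matrices are totally unimodular, every vertex of $F$ is $\{0,1\}$-valued. A $\{0,1\}$-flow in $F$ is exactly the indicator vector of a directed path $P = ((\node_1,\node_2,\x_1),\dots,(\node_\nld,\node_{\nld+1},\x_\nld))$ from $\initnode$ to a terminal node $\node_{\nld+1}$: the single unit leaving $\initnode$ must be conserved layer by layer until absorbed at a terminal, $0/1$ flow forbids branching, and acyclicity forbids stray cycles. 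By property~(B) of Definition~\ref{def:valuedd} such a path is uniquely determined by its label sequence $\xvec^P := (\x_1,\dots,\x_\nld)$, and by property~(C) the map $P \mapsto (\xvec^P, \edgeval_{\node_{\nld+1}})$ is a bijection from the set of $\initnode$--terminal paths onto $\extval$, with $\edgeval_{\node_{\nld+1}} = \val(\xvec^P)$. Evaluating $\ell$ on the indicator $\flowvec^P$ then returns exactly $(\xvec^P, \val(\xvec^P))$: constraint \eqref{extform:cons:3} gives $\x_j = \x^P_j$, since the unique layer-$j$ edge of $P$ carries flow and is counted precisely when its label is $1$, and constraint \eqref{extform:cons:5} gives $\z = \edgeval_{\node_{\nld+1}}$, since only the terminal-incident edge of $P$ carries flow. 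Therefore $\ell$ maps the vertex set of $F$ bijectively onto $\extval$, whence $\polynet(\net) = \ell(F) = \ell(\textnormal{conv}(\textnormal{vert}(F))) = \textnormal{conv}(\ell(\textnormal{vert}(F))) = \textnormal{conv}(\extval)$.

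Finally I would upgrade this identity to the stated extreme-point equivalence. Distinct points of $\extval$ have distinct $\xvec$-parts (since $\val$ is single-valued), and those $\xvec$-parts are $\{0,1\}$-vectors, i.e.\ vertices of $[0,1]^{\nld}$; as vertices of the cube are in convex position, no point of $\extval$ is a proper convex combination of the others, so $\extval$ is exactly the extreme-point set of $\textnormal{conv}(\extval) = \polynet(\net)$, which is precisely the claim. The main obstacle I anticipate is the bookkeeping around the terminal layer of \eqref{model:extform} --- getting the node--arc incidence structure, and hence the total-unimodularity argument, exactly right once terminals are treated as sinks --- together with the routine but essential verification that $\ell$ separates the vertices of $F$; the remaining steps are standard polyhedral manipulations.
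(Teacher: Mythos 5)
Your proposal is correct and follows essentially the same route as the paper's proof: isolate the flow polytope defined by \eqref{extform:cons:1}--\eqref{extform:cons:2} and \eqref{extform:cons:6}, use network-flow integrality (you via total unimodularity, the paper via citing the path-polytope result) to identify its vertices with root--terminal paths, invoke Definition~\ref{def:valuedd}-(B),(C) to put these paths in bijection with $\extval$, and then project linearly onto $(\xvec,\z)$. The only differences are cosmetic: you make explicit the sink convention at the terminal layer and close the argument by noting that points of $\extval$ have distinct $\{0,1\}$-valued $\xvec$-parts and hence are in convex position, whereas the paper closes by matching extreme points of the flow polytope with extreme points of $\polynet(\net)$ through the convex-combination decomposition of the projection.
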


\medskip
The continuous model $\polynet(\net)$ represented by \eqref{model:extform} is composed of $|\edgeset|$ variables and $|\nodeset|+|\indexset|+1$ constraints. We can replace \eqref{vfr_ext:cons:3} by $\polynet(\net_{A})$ in the extension \eqref{model:vfr_ext} to obtain an alternative single-level exact reformulation of the bilevel program. Due to Proposition \ref{prop:convexhull}, the continuous relaxation would be either the same or stronger than the one obtained when using the state-based formulation $\extval^{\issvec}$; e.g., in Example \ref{ex:continousexact}, the resulting linear programming relaxation from $\polynet(\net)$ is optimal. The trade-off is the additional number of variables and constraints required for the reformulation, further discussed in the size analysis of \S\ref{sec:networkconstruction}.


\section{Network Construction and Symmetry-based Reduction}
\label{sec:networkconstruction}

This section provides a general procedure to build $\net$ and identify symmetries that are exposed by the underlying network structure. In \S \ref{subsec:statebasednet}, we show how to construct an initial value network by expanding states $\stateset$ from the representation in \S\ref{subsec:statebased} component-wise. Next, in \S \ref{subsec:reduction} we describe a compression algorithm that removes the redundancies associated with partial paths that evaluate to the same $\val$.




\subsection{State-based network}
\label{subsec:statebasednet}

Let $\al_{1}, \dots, \al_{j}$ denote the columns of $\Al$. Recall that the state-based representation in \S\ref{subsec:statebased} enumerates the states 
$\stateset := \{\state \in \mathbb{R}^{\ncons} \colon \state = \Al \xvec, \, \xvec \in \{0,1\}^{\nld}, \, \vale(\state) < +\infty\}$ from \eqref{eq:statedef} that admit a feasible follower's subproblem. For our subsequent derivation, define the state sets $\stateset_1, \dots, \stateset_{\nld+1}$ by setting
$\stateset_{\nld+1} := \stateset$ and 
\begin{align}
	\label{eq:partialstateset}
	\stateset_{j} := \{\state \in \mathbb{R}^{\ncons} \, \colon \, \state = \state' - \al_{j} \x, \, \forall \state' \in \stateset_{j+1}, \, \forall \x \in \{0,1\} \}, \,\,\, \forall j \in \indexset.
\end{align}
In other words, each element $\state \in \stateset_{j}$ is the partial sum $\state = \al_{1} \x_{1} + \al_{2} \x_{2} + \dots + \al_{j-1} \x_{j-1} \in \mathbb{R}^{\ncons}$ for some $(\x_1, \dots, \x_{j-1})$ that can be completed to some state in $\stateset$. Moreover, note that $\stateset_{1} = \{ \zeros \}$ for the zero vector $\zeros \in \mathbb{R}^{\ncons}$. We have that $\stateset_{1}, \dots, \stateset_{\nld+1}$ provide a value network for $\val$ by mapping states to nodes and state transitions to edges, as more rigorously defined in Lemma \ref{lem:statevalue}.  

\begin{lemma}{\textsc{(State-based Networks)}.}
	\label{lem:statevalue}
	Let $\net^{S} = (\nodeset^S, \edgeset^S, \edgevalvec^S)$ be such that
	\begin{enumerate}[label=(\alph*)]
		\item $\nodeset^S_{j} = \stateset_{j}$ for all $j \in \augindexset$, i.e., each node $\node \in \nodeset^S_{j}$ is a state $\state \in \stateset_{j}$.
		\item There exists an edge $(\state, \state', \x) \in \edgeset^S$ if and only if $\state \in \nodeset^S_{j}$, $\state' \in \nodeset^S_{j+1}$, $\x \in \{0,1\}$, and $\state' = \state + \al_{j} \x$ for some $j \in \indexset$.
		\item $\edgeval^S_{\state} = \vale(\state)$ for all $\state \in \nodeset^S_{\nld+1} = \stateset_{\nld+1}$.
	\end{enumerate}

	\smallskip
	Then, $\net^{S}$ is a value network for $\val$.
\end{lemma}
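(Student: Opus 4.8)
The plan is to verify directly that $\net^{S}$ meets each of the three requirements (A)--(C) of Definition~\ref{def:valuedd}. Properties (A) and (B) amount to bookkeeping about the layering and the transition rule in item~(b), so the real content is property~(C): the requirement that root--terminal paths be in one-to-one correspondence with the pairs $(\xvec,\val(\xvec))\in\extval$, with matching terminal values.

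First I would make explicit the description of the layers encoded in~\eqref{eq:partialstateset}, namely that for $j\in\indexset$ a vector $\state$ lies in $\stateset_{j}$ exactly when $\state=\sum_{i=1}^{j-1}\al_{i}\x_{i}$ for some partial assignment $(\x_{1},\dots,\x_{j-1})\in\{0,1\}^{j-1}$ that extends to an $\xvec\in\{0,1\}^{\nld}$ with $\Al\xvec\in\stateset$ (equivalently $\vale(\Al\xvec)<+\infty$). In particular $\stateset_{1}=\{\zeros\}$, so $\nodeset^{S}_{1}$ is the single root, and, reading each node as a pair (layer, state vector), the sets $\nodeset^{S}_{1},\dots,\nodeset^{S}_{\nld+1}$ form a layered partition; this is property~(A). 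For~(B): by item~(b) every edge of $\edgeset^{S}$ goes from $\nodeset^{S}_{j}$ to $\nodeset^{S}_{j+1}$ for some $j\in\indexset$, and from a fixed node $\state\in\nodeset^{S}_{j}$ the only possible out-edge carrying label $0$ is $(\state,\state,0)$ and the only one carrying label $1$ is $(\state,\state+\al_{j},1)$; hence no two out-edges of a node share a label, whichever of these two candidates actually belongs to $\edgeset^{S}$.

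For the forward half of~(C), let $(\xvec,\val(\xvec))\in\extval$, so $\xvec\in\{0,1\}^{\nld}$ and $\val(\xvec)<+\infty$, and set $\state^{(1)}:=\zeros$ and $\state^{(j+1)}:=\state^{(j)}+\al_{j}\x_{j}$ for $j\in\indexset$, so $\state^{(\nld+1)}=\Al\xvec$. Using the identity $\val(\xvec)=\vale(\Al\xvec)$ from~\S\ref{subsec:statebased}, finiteness of $\val(\xvec)$ gives $\state^{(\nld+1)}\in\stateset=\stateset_{\nld+1}$; then an induction on $j$ from $j=\nld+1$ down to $j=1$ via~\eqref{eq:partialstateset} (if $\state^{(j+1)}\in\stateset_{j+1}$ then $\state^{(j)}=\state^{(j+1)}-\al_{j}\x_{j}$ with $\x_{j}\in\{0,1\}$ lies in $\stateset_{j}$) shows $\state^{(j)}\in\stateset_{j}=\nodeset^{S}_{j}$ for every $j\in\augindexset$. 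By item~(b) each triple $(\state^{(j)},\state^{(j+1)},\x_{j})$ is an edge, so they form a root--terminal path with labels $\x_{1},\dots,\x_{\nld}$, and by item~(c) its terminal value is $\edgeval^{S}_{\state^{(\nld+1)}}=\vale(\Al\xvec)=\val(\xvec)$. For the converse, a root--terminal path $(\state^{(1)},\state^{(2)},\x_{1}),\dots,(\state^{(\nld)},\state^{(\nld+1)},\x_{\nld})$ must, by item~(b), have $\state^{(1)}=\zeros$ and $\state^{(j+1)}=\state^{(j)}+\al_{j}\x_{j}$, so $\state^{(\nld+1)}=\Al\xvec$ with $\xvec:=(\x_{1},\dots,\x_{\nld})$; since $\state^{(\nld+1)}\in\stateset_{\nld+1}=\stateset$ we get $\vale(\Al\xvec)<+\infty$, hence $\val(\xvec)=\vale(\Al\xvec)<+\infty$, so $(\xvec,\val(\xvec))\in\extval$, and the terminal value is again $\edgeval^{S}_{\state^{(\nld+1)}}=\val(\xvec)$. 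Because the transition in item~(b) is a deterministic function of the current node and the label, the path attached to a given $\xvec$ is unique, which gives the one-to-one claim in~(C).

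The one step that needs genuine care is the downward induction showing $\state^{(j)}\in\stateset_{j}$ — i.e., that the partial sums of a follower-feasible $\xvec$ really do appear as nodes in the corresponding layers — together with keeping straight the three coincident notions of infeasibility ($(\xvec,\cdot)\notin\extval$, $\val(\xvec)=+\infty$, $\Al\xvec\notin\stateset$), which are tied together only through $\val(\xvec)=\vale(\Al\xvec)$. I do not expect any deeper obstacle; in particular, $\net^{S}$ may carry nodes lying on no root--terminal path, but Definition~\ref{def:valuedd} does not forbid this, and such nodes are removed by the reduction of~\S\ref{subsec:reduction}.
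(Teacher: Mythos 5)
Your proposal is correct and follows essentially the same route as the paper's proof: trace the partial-sum states $\state^{(1)}=\zeros$, $\state^{(j+1)}=\state^{(j)}+\al_j\x_j$ through the layers, show via \eqref{eq:partialstateset} that each lies in $\stateset_j$, and use $\val(\xvec)=\vale(\Al\xvec)$ together with condition~(c) to match terminal values in both directions. Your added detail — the explicit downward induction for $\state^{(j)}\in\stateset_j$, the (layer, state)-pair reading of nodes for property (A), and the label-uniqueness check for (B) — merely spells out what the paper's proof dismisses as "by construction."
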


We refer to $\net^{S}$ as the state-based network of $\val$. In principle, if the problem input size is small (e.g., a knapsack case as presented in \S\ref{subsec:statebased}), $\net^{S}$ can be constructed via the following three steps:
\begin{enumerate}
	\item Enumerate the states $\stateset_{\nld+1}$ by listing all vectors $\{ \Al \xvec \colon \xvec \in \{0,1\}^{\nld} \}$ and removing infeasible states.
	\item Obtain $\stateset_{\nld}, \stateset_{\nld-1}, \dots, \stateset_{1}$, in order, via the relation \eqref{eq:partialstateset}. 
        \item Set $\nodeset^S_1, \dots, \nodeset^S_{\nld+1}$ as $\stateset_1, \dots, \stateset_{\nld+1}$, respectively, and create $\edgeset^S$ reflecting the state transitions of
        \eqref{eq:partialstateset}.
\end{enumerate}

\begin{example}
	\label{ex:statenetwork}
Figure \ref{fig:knap_dd}-(a) depicts the state-based network $\net^{S}$ for the value function $\val$ from Example \ref{ex:knapsack}. In particular, given the $\ncons=2$ interaction constraints, a terminal state $\statevec \in  \stateset_{\nld+1} = \nodeset^S_{\nld+1} \subseteq \mathbb{R}^2$ is described by
\begin{align*}
    \statevec = \Al \xvec =
    \left[
        \begin{array}{ccc}
        -1 & -1 & -1 \\
        0  & 0 & -2 \\
        \end{array}
    \right]
    \cdot 
    \left [
        \begin{array}{c}
        \x_1 \\
        \x_2 \\
        \x_3 
        \end{array}
    \right ]
    = 
    \left [
        \begin{array}{c}
        -1 \\
        0 \\
        \end{array}
    \right ]    
    \x_1
    +
    \left [
        \begin{array}{c}
        -1 \\
        0 \\
        \end{array}
    \right ]    
    \x_2
    +
    \left [
        \begin{array}{c}
        -1 \\
        -2 \\
        \end{array}
    \right ]
    \x_3
\end{align*}
for some $\xvec = (\x_1, \x_2, \x_3) \in \{0,1\}^{\nld}$. Analogously, any state in layer $j \ge 2$ can be described by considering the product of the first $j-1$ columns of $\Al$ and $j-1$ components of $\xvec$. The labels above a node at the $j$-th layer represent the associated state $\statevec \in \stateset_{j} \in \mathbb{R}^2$, $j \in \augindexset$. In particular, $(\x_1, \x_2, \x_3) = (1, 1, 0)$ results in the state $\statevec \in \stateset_{\nld+1}$ such that $\statevec = (-\x_1 - \x_2 - \x_3, -2x_3) = (-2, 0)$, which has a value of $\vale(\statevec) = -5$. \hfill $\square$
\end{example}

\subsection{Size of state-based networks} 
\label{sec:sizestatebased}

The total number of nodes in a state-based network is $|\nodeset^{S}| = \sum_{j=1}^{\nld+1} |\stateset_{j}|$ and the number of edges is bounded by $2|\nodeset^{S}|$, since each node has at most two outgoing edges as labels are binaries. The cardinality of each $|\stateset_{j}|$ at some layer $j$ depends on the number of distinct vectors $\al_1 \x_1 + \dots + \al_{j-1} \x_{j-1} \in \mathbb{R}^{\ncons}$ that leads to a feasible state $\stateset_{\nld+1}$. In particular, for the $i$-th interaction constraint, $i \in \{1,\dots,\ncons\}$, the number of distinct elements $L_{ij}$ at a layer $j$ is given by the number of subset sums of the set $\{a_{i,1}, a_{i,2}, \dots, a_{i,j-1}\}$, i.e., $L_{ij} := \left|\left\{\sum_{j'=1}^{j-1} a_{i,j'} \x_{j'} \colon \xvec \in \{0,1\}^{\nld} \right\} \right|$. If $L := \max_{i \in \{1,\dots,\ncons\}, j \in \augindexset} L_{ij}$ is the maximum across all interaction constraints, then $\stateset_j \in \mathcal{O}(L^m)$ and $|\nodeset^{S}| \in \mathcal{O}(\nld \, L^m)$. Thus, the size of $\net^S$ will be tractable under conditions that restrict either the dimension or the variance of $\Al$; for instance,
\begin{itemize}
\item The number of interaction constraints $\ncons$ is small and $\Al$ carries structure that allows us to rule out infeasible follower's subproblems. For example, if the follower's subproblem is a knapsack constraint, $\ncons = 1$ and $\Al$ are positive integers (see \S\ref{subsec:statebased}). In such a case, $L$ is bounded by the right-hand side of the knapsack constraint, $b$, and the network $\net_S$ is pseudo-polynomial in the problem input. This implies that the single-level reformulation obtained from Proposition \ref{prop:convexhull} is also pseudo-polynomial.

\item The interaction matrix $\Al$ is sparse or contains multiple repeated elements, restricting the number of subset sums and therefore the possible values of $L$.
\end{itemize}


In particular, if $\Al \ge \zeros$, then $\stateset_j \subseteq \stateset_{\nld+1}$ for all $j \in \indexset$ because $\statevec + \zeros \in \stateset_{\nld+1}$ for any $\statevec \in \stateset_{j}$, that is, one obtains a feasible state by setting the remaining leader's components to $0$. For example, note that every node in an intermediate layer in Figure \ref{fig:knap_dd} has a path to a terminal node that only include edges with labels $\x = 0$. It follows that $|\stateset_j| \in \mathcal{O}(|\stateset_{\nld+1}|) = \mathcal{O}(|\stateset|)$, and therefore $|\nodeset^{S}| \in \mathcal{O}(\nld \cdot |\stateset|)$. That is, the trade-off in generating the convex hull of $\extval$ is, in the worst case, replicating $\stateset$ for each of the $\nld$ leader's components.


\begin{remark}[Edge lengths]
	\label{ex:remark}
	It is common in the decision diagram literature to associate arcs with lengths, as opposed to considering only node terminal values. In particular, arc lengths lead to formulations that represent more traditional shortest-path problems (see, e.g., \citealt{bergman2018discrete}). If such a perspective is desired, one could define an equivalent value network with edge lengths, for example, by distributing node terminal values to arcs via a bottom-up approach \citep{hooker2013decision}. Nonetheless, in our context, such a transformation has no direct effect in terms of the linear programming relaxation as implied by Proposition \ref{prop:convexhull}, nor in the minimality of the network size (see \S\ref{subsec:reduction}). 
\end{remark}

\subsection{Reduction via Symmetric Nodes}
\label{subsec:reduction}

The combinatorial structure of the network exposes additional symmetries of the value function that are not captured by  the linear form $\Al \xvec$ embedded in $\vale$. Specifically, if two terminal nodes have the same value, such as $u_{31}$ and $u_{33}$ in Figure \ref{fig:knap_dd}, they are effectively redundant to the representation and could be intuitively replaced by a single node. We can leverage the same interpretation from Section \ref{subsec:valdd} for intermediate nodes $\node, \node' \in \nodeset \setminus \nodeset_{\nld+1}$. Namely, if $\node$ and $\node'$ lead to completions ending at the same terminal values, they also capture the same information. This is the case, for instance, of nodes $u_{22}$ and $u_{23}$ in Figure \ref{fig:knap_dd}. We formalize this concept in Definition \ref{def:symmetryclass}.

\begin{definition}{\textsc{(Symmetric Nodes and Classes.)}}
	\label{def:symmetryclass}
	The nodes $u, v \in \nodeset_{j}$, $j \in \augindexset$, are \textit{symmetric} if for any path $(\node, \node')$ starting at $u$ and ending at a terminal node $\node' \in \nodeset_{\nld+1}$, there exists a path $(v, \node'')$ starting at $v$ and ending at $\node'' \in \nodeset_{\nld+1}$ such that 
	\begin{enumerate}
		\item[(i)]  if $j < \nld+1$, both the paths $(u, \node')$ and $(v, \node'')$ have the same labels $\xvec = (\x_{j}, \dots, \x_{\nld})$; and
		\item[(ii)] the terminal nodes $\node'$ and $\node''$ have the same values, i.e., $\edgeval_{\node'} = \edgeval_{\node''}$;
	\end{enumerate}
	and vice-versa for $v$ and $\node$.  A \textit{symmetry class} is a subset of nodes $\symgroup \subseteq \stateset_{j}$ that are pairwise symmetric.
	\hfill $\square$
\end{definition}


For example, in the state-based network of Figure \ref{fig:knap_dd}, nodes $u_{22}$ and $u_{23}$ in the previous-to-last layer are symmetric, despite their associated states being distinct. Analogously, the terminal node set $\{u_{31}, u_{33}, u_{35}\}$ also defines a symmetry class. Symmetric nodes are redundant as they represent the same set of paths and value function evaluations, and it suffices to represent only one node per symmetry class. That is, nodes in a symmetry class can be equivalently ``merged'' without loss of information, as we develop more rigorously in this section. 
Notably, even if two nodes have the same path labels to terminal nodes, they may not lead to the same follower's value; for example, nodes $u_{21}$ and $u_{23}$ in Figure \ref{fig:knap_dd} have the same completions but they are not symmetric. 
This distinction separates symmetry from the traditional concept of node equivalence from the existing decision diagram literature (see, e.g., \citealt{bergman2016}). 

Nonetheless, similar reduction algorithms can be applied to construct a value network that only contains one node per symmetry class, i.e., it is minimal in size. Further, this value network is unique (up to isomorphism). To demonstrate this, the following lemma states two sufficient conditions for nodes to be symmetric that we apply in our reduction methodology.
\begin{lemma}{(\sc{Reduction Conditions})}
	\label{lemm:reduction}
	For any two nodes $u,v \in \nodeset$ at the same layer:
	\begin{enumerate}
		\item[(R1)] If $u,v \in \nodeset_{\nld+1}$ are terminal nodes, $u$ and $v$ are symmetric if and only if $\edgeval_{u} = \edgeval_{v}$.

		\item[(R2)] If $u,v \in \nodeset \setminus \nodeset_{\nld+1}$, $u$ and $v$ are symmetric if for every edge $(u,u',\x) \in \edgeset$, there exists an edge $(v,u',\x) \in \edgeset$ directed to the same target node $u'$, and vice-versa.
	\end{enumerate}
\end{lemma}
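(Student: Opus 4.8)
The plan is to prove the two reduction conditions by unwinding Definition~\ref{def:symmetryclass} and using the structural properties (A)--(C) of value networks. I will treat (R1) and (R2) separately, and in each case I will verify conditions (i) and (ii) of the symmetry definition directly.

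\emph{Condition (R1).} Here $u,v \in \nodeset_{\nld+1}$ are terminal nodes, so there are no outgoing paths to complete; condition (i) of Definition~\ref{def:symmetryclass} is vacuous (the case $j = \nld+1$). Thus only condition (ii) is relevant: the (trivial, length-zero) path starting at $u$ ends at $u$ itself, and we need a path starting at $v$ that ends at a terminal with the same value. The only such path is the trivial one ending at $v$, so symmetry of $u$ and $v$ reduces \emph{exactly} to $\edgeval_{u} = \edgeval_{v}$. Both directions are immediate, giving the ``if and only if.''

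\emph{Condition (R2).} Now suppose $u,v \in \nodeset_j$ with $j \le \nld$ and assume the arc-matching hypothesis: for every $(u,u',\x) \in \edgeset$ there is $(v,u',\x) \in \edgeset$, and vice versa. I would argue by backward induction on the layer index $j$, using (R1) as the base case at layer $\nld+1$. The inductive claim is: if $u,v$ at layer $j$ have identical sets of $(\,\cdot\,,\x)$-labeled outgoing arcs pointing into a common set of layer-$(j+1)$ nodes, and moreover corresponding targets are symmetric, then $u$ and $v$ are symmetric. The key observation is that because no two outgoing edges of a node share the same label (property (B)), the outgoing arc with a given label $\x$ is unique; so the hypothesis of (R2) sets up a canonical bijection between out-arcs of $u$ and out-arcs of $v$ that preserves both the label and the target node. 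Given a path $(u,u')$ with label sequence $(\x_j,\dots,\x_\nld)$, its first arc $(u,w,\x_j)$ is matched with $(v,w,\x_j)$, the \emph{same} node $w$; then the remainder of the path is a path from $w$ to a terminal, which (trivially, since it is literally the same suffix path) is also a path from $w$ with labels $(\x_{j+1},\dots,\x_\nld)$ ending at the same terminal $u'$. So the path $(v,w,\x_j)$ followed by this suffix is a valid path from $v$ with labels $(\x_j,\dots,\x_\nld)$ ending at $u'$; conditions (i) and (ii) hold with $\node'' = \node' = u'$. The ``vice-versa'' direction is symmetric. Hence $u$ and $v$ are symmetric, proving (R2); note that because the target node is literally shared, we do not even need the inductive hypothesis on symmetry of the targets — the arc-matching condition alone suffices, which is why (R2) is stated as a one-directional implication rather than an equivalence.

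\emph{Main obstacle.} The argument is essentially bookkeeping, so there is no deep obstacle; the one subtlety worth care is the asymmetry between (R1) and (R2). In (R1) the condition is necessary and sufficient because terminal symmetry is fully determined by the value label. In (R2) the arc-matching condition is only \emph{sufficient}: two intermediate nodes can be symmetric without sharing targets (e.g., their targets could themselves be distinct-but-symmetric nodes, as with $u_{22}$ and $u_{23}$ in Figure~\ref{fig:knap_dd} before any reduction), so I must be careful to state and prove only the implication, not its converse. The clean way to handle this is to phrase (R2) as a lemma about arcs into a \emph{common} target set, and to invoke it during the reduction procedure \emph{after} lower layers have already been reduced — at which point symmetric targets have been merged into a single node and the arc-matching condition becomes the operative test. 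I would remark on this interplay so that the subsequent reduction algorithm's correctness (and the minimality/uniqueness claim) rests on applying (R1) at the terminal layer and then (R2) layer by layer moving upward.
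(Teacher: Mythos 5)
Your proposal is correct and follows the same reasoning the paper uses: (R1) is read off directly from Definition~\ref{def:symmetryclass} at the terminal layer, and (R2) holds because label-matched edges into the \emph{same} target node force identical suffix paths (labels and terminal values), so no induction on target symmetry is needed. Your closing remark about (R2) being only sufficient and becoming the operative test after lower layers are reduced is exactly how the paper deploys the lemma inside Algorithm~\ref{algo:reduction} and Proposition~\ref{prop:minimality}.
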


Both conditions (R1) and (R2) are directly implied by Definition \ref{def:symmetryclass}, where (R2) indicates that outgoing edges from $u$ and $v$ with the same label target the same node in a subsequent layer; thus, they must result in the same paths to terminal nodes. 

Based on Lemma \ref{lemm:reduction}, Algorithm \ref{algo:reduction} presents a reduction procedure to compress a given value network $\net$ by only preserving one node per symmetry class. More specifically, the procedure applies condition (R1) at the terminal layer (lines 1-3) to remove value redundancy, i.e., to ensure that the last layer only contains one node per distinct value of $\val$. Condition (R2) is then applied iteratively at layers $\nld, \dots, 1$ (lines 4-7) to identify the remaining symmetric nodes, which are then appropriately removed by redirecting their incoming arcs to another node of the class (note that their outgoing nodes are already redundant). 




\begin{algorithm}	
	\begin{algorithmic}[1] 
	\Procedure{Reduce}{$\net$}
	\While{$\exists \, u,v \in \nodeset_{\nld+1}$ such that (R1) is satisfied }
		\State Redirect incoming edges at $v$ to $u$ 
		\State $\nodeset_{j} \gets \nodeset_{j} \setminus \{v\}$ 
	\EndWhile
	\For{$j=\nld, \dots, 1$}
		\While{$\exists \, u,v \in \nodeset_{j}$ such that (R2) is satisfied }
			\State Redirect incoming edges at $v$ to $u$ \;
			\State $\nodeset_{j} \gets \nodeset_{j} \setminus \{v\}$ 	
		\EndWhile
	\EndFor
	\EndProcedure
	\end{algorithmic}
 	\caption{Reduction Procedure}
	\label{algo:reduction}
\end{algorithm}

The procedure in Algorithm \ref{algo:reduction} is a network reduction algorithm augmented with the terminal value conditions. We can analogously extend existing results on minimality and uniqueness. More precisely, Proposition \ref{prop:minimality} states that Algorithm \ref{algo:reduction} retuns a  value network $\net$ with no symmetries for any arbitrary input, and that $\net$ is unique under the leader's variable indexing $\indexset$. In particular, this imply that $\net$ is necessarily minimal in terms of the number of nodes $|\nodeset|$ under this same indexing, since the presence of symmetry always imply the existence of redundant nodes in $\net$.

\begin{proposition}
	\label{prop:minimality}
	The modified value network $\net$ from Algorithm \ref{algo:reduction} is such that no two nodes $u,v \in \nodeset$ are symmetric. Moreover, $\net$ is unique, i.e., there does not exist an alternative value network that does not include symmetric nodes.
\end{proposition}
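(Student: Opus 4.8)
The statement has two halves, which I would treat separately: (i) Algorithm~\ref{algo:reduction} terminates with a value network having no symmetric pair of nodes; (ii) any two such symmetry-free value networks for $\val$ under the ordering $\indexset$ are isomorphic. In both halves the central device is the same. To each node $\node\in\nodeset_{j}$ I would attach its \emph{completion profile} $h_{\node}$, the partial function sending a label string $(\x_{j},\dots,\x_{\nld})$ to the terminal value $\edgeval_{\node'}$ of the---by Definition~\ref{def:valuedd}(B), unique---$(\node,\node')$-path carrying those labels, and undefined when no such path exists. As is standard and without loss of generality (it holds for $\net^{S}$ by construction and is preserved by Algorithm~\ref{algo:reduction}), I would assume every node lies on a root--terminal path, so each $h_{\node}$ has nonempty domain, $h_{\initnode}$ agrees with $\val$ on $\{\xvec : \val(\xvec)<+\infty\}$ by Definition~\ref{def:valuedd}(C), and $h_{\node}$ of a terminal node is the constant $\edgeval_{\node}$. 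The first fact I would record, by unwinding Definition~\ref{def:symmetryclass}, is that two nodes of a common layer are symmetric \emph{if and only if} their completion profiles coincide.

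\textbf{Part (i).} Termination is immediate since each loop iteration deletes a node from a finite set. The real content is the claim that, \emph{if every node strictly below layer $j$ already forms a singleton symmetry class, then two nodes of $\nodeset_{j}$ are symmetric if and only if they satisfy} (R1) \emph{(when $j=\nld+1$) or} (R2) \emph{(when $j\le\nld$)}. The ``if'' direction is Lemma~\ref{lemm:reduction}. For ``only if'' with $j\le\nld$: given symmetric $\node,\nodep$ and an edge $(\node,\node',\x)$, I would prepend that edge to an arbitrary $(\node',w)$-path and use symmetry of $\node,\nodep$ to produce a $(\nodep,w'')$-path whose first edge is necessarily $(\nodep,\nodep',\x)$ and whose suffix matches; this forces $h_{\node'}=h_{\nodep'}$, hence $\node'=\nodep'$ by hypothesis, which is exactly (R2). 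I would then run the induction in the order Algorithm~\ref{algo:reduction} uses: after the first loop the terminal values are pairwise distinct, so $\nodeset_{\nld+1}$ is symmetry-free by (R1); and when the second loop reaches layer $j\le\nld$, every lower layer is already symmetry-free, and deletions performed there (or later) merely rewire edges entering layers $\le j$ and never change the outgoing edges of $\nodeset_{j}$, so at termination $\nodeset_{j}$ contains no (R2)-pair. The claim then yields that $\nodeset_{j}$ is symmetry-free, for every $j$.

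\textbf{Part (ii).} Let $\net,\net'$ be symmetry-free value networks for $\val$ under $\indexset$, with profiles $h$ and $h'$. By the equivalence above, symmetry-freeness makes $\node\mapsto h_{\node}$ a bijection of $\nodeset_{j}$ onto $\mathcal{F}_{j}:=\{h_{\node}:\node\in\nodeset_{j}\}$, and likewise for $\net'$. I would show $\mathcal{F}_{j}$ is determined by $\val$ alone: $\mathcal{F}_{1}=\{h_{\initnode}\}$ is the singleton containing $\val$ on its finite domain, and along any edge $(\node,\node',\x)$ the profile $h_{\node'}$ is the ``$\x$-shift'' of $h_{\node}$ (restrict to strings beginning with $\x$, then drop that coordinate), while conversely every nonempty $\x$-shift of an $h_{\node}$ with $\node\in\nodeset_{j}$ is realized at the $\x$-child of $\node$ (which exists precisely because the shift is nonempty, and which is unique by Definition~\ref{def:valuedd}(B)). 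Hence $\mathcal{F}_{j+1}=\{\,\x\text{-shift of }h : h\in\mathcal{F}_{j},\ \x\in\{0,1\},\ \text{nonempty}\,\}$, and induction gives $\mathcal{F}_{j}=\mathcal{F}'_{j}$ for all $j$. I would then define $\varphi\colon\nodeset\to\nodeset'$ by letting $\varphi(\node)$ be the unique node of $\nodeset'_{j}$ with $h'_{\varphi(\node)}=h_{\node}$; this is a layer-preserving bijection with $\varphi(\initnode)=\initnode'$. Finally, in a symmetry-free value network an edge $(\node,\node',\x)$ is present exactly when $h_{\node'}$ is the nonempty $\x$-shift of $h_{\node}$---a condition on profiles only---so $\varphi$ preserves edges and labels in both directions, and since a terminal's profile is its value, $\varphi$ preserves terminal values. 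Thus $\varphi$ is an isomorphism of value networks, establishing uniqueness.

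\textbf{Main obstacle.} I expect the crux to be the claim in Part (i): condition (R2) is only \emph{sufficient} for symmetry in general, so the work is in showing it becomes \emph{necessary} once every deeper layer has been collapsed to singleton symmetry classes, together with the bookkeeping that Algorithm~\ref{algo:reduction}'s bottom-up sweep never reintroduces a symmetry into a layer it has already processed. Granting that, Part (ii) is the customary reduced decision diagram uniqueness argument, adapted so that terminal nodes carry values of $\val$ rather than plain sink labels.
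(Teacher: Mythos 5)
Your proposal is correct. For the symmetry-freeness half you take essentially the same route as the paper: a backward induction over the layers whose base case is handled by (R1) and whose inductive step shows that, once every deeper layer is symmetry-free, condition (R2) is \emph{necessary} as well as sufficient for symmetry, so a surviving symmetric pair at layer $j$ would force two distinct symmetric children, contradicting the induction hypothesis. Your completion-profile reformulation of Definition~\ref{def:symmetryclass} makes this necessity step explicit, and you additionally cover the case where one node has an $\x$-labelled edge and the other does not, and record the bookkeeping that processing shallower layers in Algorithm~\ref{algo:reduction} never alters the outgoing edges of an already-processed layer -- details the paper's proof leaves implicit. Where you genuinely diverge is the uniqueness half: the paper disposes of it by observing that the reduced network is a multi-terminal binary decision diagram with no equivalent nodes and invoking the known uniqueness/minimality theorem (Thm.~3.1.4 of \citealt{wegener2000branching}), whereas you re-prove that fact in situ via the profile-shift argument ($\mathcal{F}_1$ is determined by $\val$, $\mathcal{F}_{j+1}$ consists of the nonempty shifts of $\mathcal{F}_j$, and in a symmetry-free network nodes, edges, labels and terminal values are all read off from the profiles, yielding the isomorphism $\varphi$). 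This is the standard proof of the cited theorem adapted to value networks; it buys self-containedness and a verification that terminal values are preserved, at the cost of length. One shared caveat: both your argument and the cited theorem require that every node lie on a root--terminal path (otherwise vacuous dead-end nodes could spoil strict uniqueness); you state this standing assumption explicitly and note it is preserved by the reduction, while the paper leaves it implicit, so this is not a gap relative to the paper's own proof.
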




    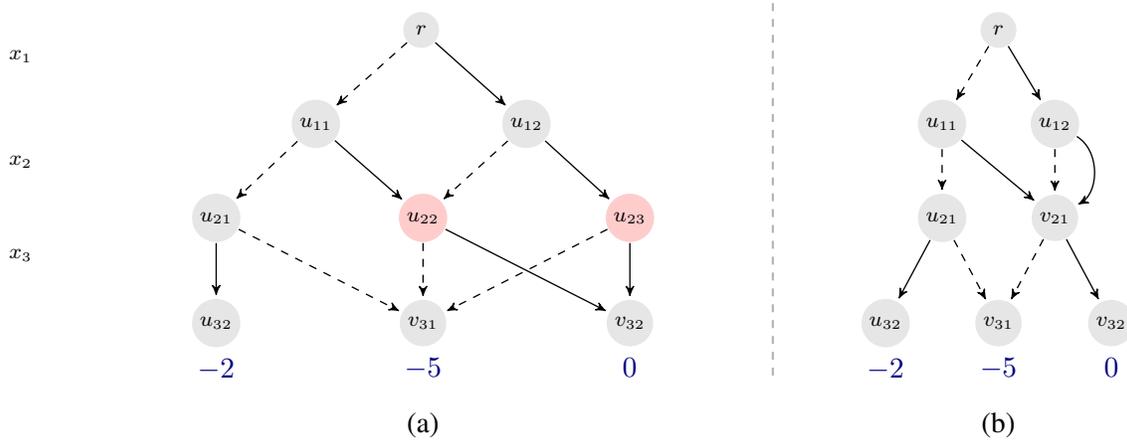
\begin{figure}[t]
        \begin{center}

            \tikzstyle{main node} = [circle,fill=gray!50,font=\scriptsize, inner sep=1pt]            
            \tikzstyle{text node} = [font=\scriptsize]
            \tikzstyle{arc text} = [font=\scriptsize]
			\tikzstyle{zero arc} = [draw,dashed, line width=0.5pt,->]
			\tikzstyle{one arc} = [draw,line width=0.5pt,->]
					
            \begin{tikzpicture}[->,>=stealth',shorten >=1pt,auto,node distance=1cm,
                thick]        
                \tikzstyle{node label} = [circle,fill=gray!20,font=\scriptsize, inner sep=2pt]


                \node[node label] (r) at (-2.175,0) {$\;\initnode\;$};

				\node[node label] (u11)  at (-3.575,-1.25)  {$u_{11}$};
                \node[node label] (u12)  at (-0.775,-1.25)  {$u_{12}$};

				\node[node label] (u21)  at (-4.9,-2.5) {$u_{21}$};
                \node[node label, fill=red!20] (u22)  at (-2.15,-2.5) {$u_{22}$};
                \node[node label, fill=red!20] (u23)  at (0.6,-2.5) {$u_{23}$};

				\node[node label, label={[color=blue!50!black] south:$-2$}] (u32)  at (-4.9,-3.9)  {$u_{32}$};
				\node[node label, label={[color=blue!50!black] south:$-5$}] (v31)  at (-2.15,-3.9) {$v_{31}$};
				\node[node label, label={[color=blue!50!black] south:$0$}] (v32)  at (0.6,-3.9) {$v_{32}$};				
				;

				\path[every node/.style={font=\sffamily\small}]
                (r)                     
                    edge[zero arc] (u11)
                    edge[one arc] (u12)
				(u11)
					edge[zero arc] (u21)
					edge[one arc] (u22)
				(u12)
					edge[zero arc] (u22)
					edge[one arc] (u23)	
				(u21)
					edge[zero arc] (v31)
					edge[one arc] (u32)
				(u22)
					edge[zero arc] (v31)
					edge[one arc] (v32)
				(u23)
					edge[zero arc] (v31)
					edge[one arc] (v32)	
				;


				\node (sepA) at (2.5, 0.5) {};
				\node (sepB) at (2.5, -4.8) {};
				\draw[dashed, -, color=black!30] (sepA) -- (sepB);


                \node[node label] (rva) at (5.5,0) {$\;\initnode\;$};

				\node[node label] (v11a) at (4.75,-1.25) {$u_{11}$};
				\node[node label] (v12a) at (6.25,-1.25) {$u_{12}$};

				\node[node label] (v21a) at (4.75,-2.5) {$u_{21}$};
				\node[node label] (v22a) at (6.25,-2.5) {$v_{21}$};

				\node[node label, label={[color=blue!50!black] south:$-2$}] (u32a) at (4,-3.9) {$u_{32}$};
				\node[node label, label={[color=blue!50!black] south:$-5$}] (v31a) at (5.5,-3.9) {$v_{31}$};
				\node[node label, label={[color=blue!50!black] south:$0$}] (v33a) at (7,-3.9) {$v_{32}$};
				;

				\path[every node/.style={font=\sffamily\small}]
				(rva)
					edge[zero arc] (v11a)
					edge[one arc] (v12a)
				(v11a)
					edge[zero arc] (v21a)
					edge[one arc] (v22a)
				(v12a)
					edge[zero arc] (v22a)
					edge[one arc, bend left=60] (v22a)
				(v21a)
					edge[one arc] (u32a)
					edge[zero arc] (v31a)
				(v22a)
					edge[zero arc] (v31a)
					edge[one arc] (v33a)
				;


				\node(item1) at (-2.15, -5.25) {(a)} ;
				\node(item2) at (5.5, -5.25) {(b)} ;


				\node[font=\scriptsize] (x1) at (-7.5, -0.35) {$\x_1$} ;
				\node[font=\scriptsize] (x2) at (-7.5, -1.75) {$\x_2$} ;
				\node[font=\scriptsize] (x3) at (-7.5, -3) {$\x_3$} ;

			\end{tikzpicture}        
		\end{center}
    \caption{Two alternative value networks for the follower's subproblem \eqref{eq:multiknapsackexample}. Coloured nodes depict a symmetry class. The networks in (a) and (b) are obtained by applying the reduction Algorithm \ref{algo:reduction} in node sets $\nodeset_{4}$ and $\nodeset_{3}$, respectively.}
    \label{fig:knap_reduction} 
\end{figure}
\begin{example}
	\label{ex:reduction}
	The networks depicted in Figures \ref{fig:knap_reduction}-(a) and \ref{fig:knap_reduction}-(b) are obtained after applying the reduction from Algorithm \ref{algo:reduction} in layers $\nodeset_{\nld+1} = \nodeset_{4}$ and $\nodeset_{\nld} = \nodeset_{3}$ to the network in Figure \ref{fig:knap_dd}, respectively. In particular, node $v_{31}$ in Figure \ref{fig:knap_reduction}-(a) aggregates the symmetric class $\{u_{31}, u_{33}, u_{35}\}$ from Figure \ref{fig:knap_dd}, and analogously $v_{32}$ aggregates the class $\{u_{34}, u_{36}\}$. Figure \ref{fig:knap_reduction}-(a) presents two symmetric nodes $\{u_{22}, u_{23}\}$ (in color), which are aggregated by node $v_{21}$ in Figure \ref{fig:knap_reduction}-(b). \hfill $\square$ 
\end{example}

\section{Network-based Approximations}
\label{sec:approximation}

In general, a value network may still be significantly large for practical purposes. We next develop an approximate value network of parameterized size that provides a relaxation of the bilevel program \eqref{model:bilevel}. The relaxation provides optimization bounds that are equal or stronger  to the high-point relaxation \eqref{vfr:obj}-\eqref{vfr:cons:2} and can used, e.g., as strengthened initial models for existing solvers. We introduce the approximation in \S\ref{subsec:avdd} and present a construction procedure based on state aggregation in \S\ref{subsec:stateagg}. Finally, in \S\ref{subsec:robust} we discuss how to further strengthen the quality of the bound via a robust optimization perspective.

\subsection{Approximate Value networks}
\label{subsec:avdd}

We define the following class of value networks to approximate the extension \eqref{model:vfr_ext}:

\begin{definition}{\textsc{(Approximate Value Networks.)}}
	\label{def:avdd}
	A network $\net_{A} = (\nodeset, \edgeset, \edgeval)$ is an approximate value network if satisfies conditions (A)-(B) of Definition \ref{def:valuedd} and
	\begin{itemize}
		\item[(D1)] (\textit{Path feasibility.}) For every solution-value pair $(\xvec, \val(\xvec)) \in \extval$, there exists an $(\initnode, \node_{\nld+1})$-path $(\node_1, \node_2, \x_1), (\node_2, \node_3, \x_2), \dots, (\node_{\nld}, \node_{\nld+1}, \x_{\nld})$ in $\net_{A}$ with labels $\xvec = (\x_1, \dots, \x_{\nld})$.

		\item[(D2)] (\textit{Objective feasibility.}) For every solution-value pair $(\xvec, \val(\xvec)) \in \extval$, the terminal node $u_{\nld+1} \in \nodeset_{\nld+1}$ of its associated $(\initnode, \node_{\nld+1})$-path in $\net_{A}$ satisfies $\edgeval_{\node_{\nld+1}} \ge \val(\xvec)$. \hfill $\square$
	\end{itemize} 
\end{definition}

Definition \ref{def:avdd} considers two approximations with respect to the original value network. In (D1), the relationship between paths in $\net_{A}$ and points in $(\xvec, \val(\xvec)) \in \extval$ is unidirectional, in that solutions $\xvec'$ extracted from paths in $\net_{A}$ may not necessarily lead to feasible follower's subproblems. Moreover, (D2) states that the terminal node value of the path representing $(\xvec, \val(\xvec)) \in \extval$ is now an \textit{upper bound} on $\val(\xvec)$, even if $\val$ solves a minimization problem. For example, any feasible follower solution $\yvec$ to $\val(\xvec)$ suffices in a valid approximation. Proposition \ref{prop:appextval} shows that these two conditions imply that solving the extension \eqref{model:vfr_ext} with $\polynet(\net_{A})$ provides a lower bound to \eqref{model:bilevel}.

\begin{proposition}
	\label{prop:appextval}
	Given an approximate value network $\net_{A} = (\nodeset, \edgeset)$, the extension \eqref{model:vfr_ext} solved by replacing \eqref{vfr_ext:cons:3} with $\polynet(\net_{A})$ is a relaxation of the original bilevel problem \eqref{model:bilevel} and provides a lower bound to its optimal solution value.
\end{proposition}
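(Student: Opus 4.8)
The plan is to show that every feasible solution of the bilevel program \eqref{model:bilevel} extends to a feasible solution of the relaxed single-level model---that is, the extension \eqref{model:vfr_ext} with \eqref{vfr_ext:cons:3} replaced by $\polynet(\net_{A})$---attaining the same objective value $f(\xvec,\yvec)$. Since both formulations minimize the same objective, this immediately gives that the relaxed optimum is no larger than the bilevel optimum, which is precisely the lower-bound claim. For context, recall that \eqref{model:bilevel} is equivalent to the value-function model \eqref{model:vfr} of \cite{fortuny1981representation}, and that \eqref{model:vfr_ext} is equivalent to \eqref{model:vfr} since \eqref{vfr_ext:cons:3} together with \eqref{vfr_ext:cons:2} reproduces $g(\yvec)\le\val(\xvec)$; thus the relaxed model carries the constraints \eqref{vfr:cons:1}-\eqref{vfr:cons:2}, $g(\yvec)\le z$, and $(\xvec,z)\in\polynet(\net_{A})$, with objective $f(\xvec,\yvec)$.

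First I would fix an arbitrary feasible pair $(\xvec^*,\yvec^*)$ of \eqref{model:bilevel} (if none exists the statement is vacuous). Because $\yvec^*$ solves the follower's subproblem \eqref{bilevel:cons:1} at $\xvec^*$, that subproblem is feasible at $\xvec^*$, so Assumption \ref{as:binarylinearinteger}-(b) guarantees $\val(\xvec^*)<+\infty$ and $g(\yvec^*)=\val(\xvec^*)$; in particular $(\xvec^*,\val(\xvec^*))\in\extval$. By condition (D1) of Definition \ref{def:avdd}, $\net_{A}$ contains a root-terminal path whose edge labels are exactly $\xvec^*$, ending at some terminal node $\node_{\nld+1}\in\nodeset_{\nld+1}$. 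I would then route one unit of flow along this path, i.e.\ set $\flow_{\edge}=1$ for every edge of the path and $\flow_{\edge}=0$ otherwise, and check the constraints of \eqref{model:extform}: \eqref{extform:cons:1} and \eqref{extform:cons:2} hold because a single root-terminal path carries a feasible unit flow from $\initnode$ to the terminal layer; \eqref{extform:cons:3} holds because the path labels equal $\xvec^*$; \eqref{extform:cons:6} is immediate; and \eqref{extform:cons:5} then forces $z^{*}:=\sum_{\node\in\nodeset_{\nld+1}}\sum_{\edge\in\inedges(\node)}\edgeval_{\node}\,\flow_{\edge}=\edgeval_{\node_{\nld+1}}$, since the path enters the terminal layer only at $\node_{\nld+1}$. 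Hence $(\xvec^*,z^{*})\in\polynet(\net_{A})$ with $z^{*}=\edgeval_{\node_{\nld+1}}$.

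It remains to verify that $(\xvec^*,\yvec^*,z^{*})$ satisfies the remaining constraints of the relaxed model. Constraints \eqref{vfr:cons:1}-\eqref{vfr:cons:2} hold because $(\xvec^*,\yvec^*)$ is bilevel-feasible: $\yvec^*\in\yvecset$ and $\Al\xvec^*+\Bf\yvec^*\ge\rhsf$ follow from $\yvec^*$ solving \eqref{bilevel:cons:1}, while $\xvec^*\in\xvecset(\yvec^*)\cap\mathbb{Z}^{\nld}$ follows from \eqref{bilevel:cons:2}. The constraint $g(\yvec)\le z$ holds because condition (D2) of Definition \ref{def:avdd} gives $z^{*}=\edgeval_{\node_{\nld+1}}\ge\val(\xvec^*)=g(\yvec^*)$. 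Therefore $(\xvec^*,\yvec^*,z^{*})$ is feasible for the relaxed model with objective value $f(\xvec^*,\yvec^*)$. Since this holds for every bilevel-feasible pair, the projection of the relaxed feasible region onto $(\xvec,\yvec)$ contains the bilevel-feasible set, so the relaxed model is indeed a relaxation of \eqref{model:bilevel}, and as the objectives coincide its optimal value is a lower bound.

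The only step requiring genuine care is the path-to-flow translation: one must confirm that the indicator vector of a single root-terminal path satisfies flow conservation \eqref{extform:cons:2} at every intermediate node and that \eqref{extform:cons:5} extracts precisely the terminal value of the unique node where that path ends, so that condition (D2) can be applied cleanly. The remainder of the argument is a direct unwinding of the definitions of $\extval$, $\polynet(\net_{A})$, and bilevel feasibility, together with Assumption \ref{as:binarylinearinteger}-(b) to ensure $\val(\xvec^*)$ is finite.
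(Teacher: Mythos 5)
Your proposal is correct and follows essentially the same route as the paper's proof: take a bilevel-feasible (in the paper, optimal) pair, use (D1) to obtain the path labeled $\xvec^*$ and (D2) to bound its terminal value below by $\val(\xvec^*)\ge g(\yvec^*)$, and conclude feasibility in the relaxed model at the same objective value. Your write-up is in fact slightly more careful than the paper's, since you explicitly set $z^*$ to the terminal value $\edgeval_{\node_{\nld+1}}$ and verify membership in $\polynet(\net_{A})$ via the unit path flow, whereas the paper leaves this adjustment implicit.
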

\begin{proof}{Proof of Proposition \ref{prop:appextval}.}
	Let $(\xvec^*, \yvec^*, \z^*)$ be an optimal solution of reformulation \eqref{model:vfr_ext} with respect to an exact $\extval$, i.e., $(\xvec^*, \yvec^*)$ solves \eqref{model:bilevel}. Then, 
	$\val(\xvec) < +\infty$ and, by Definition \ref{def:avdd}-(D2) and inequality \eqref{vfr_ext:cons:2}, $g(\yvec) \le \z^* \le \edgeval_{u^*}$ for the terminal node in $\nodeset$ associated with $\xvec^*$. That is, $(\xvec^*, \yvec^*, \z^*)$ is also feasible to \eqref{model:vfr_ext} when solved with respect to $\polynet(\net_{A})$, and it yields the same objective function value. \hfill $\blacksquare$
\end{proof}

\begin{example}
	\label{ex:appvdd}
        Consider the follower's subproblem below for $\xvec \in \{0,1\}^3$:
	\begin{subequations}
		\label{eq:multiknapsackexample_approximation}
		\begin{align}			
			\val(\xvec) := \min_{\yvec \in \{0,1\}^2} 
				&\quad 
					-100 \y_1 - 100\y_2 \\
			\textnormal{s.t.} 
				&\quad 
					-6\y_1 \ge -10 + \x_1 + 2\x_2 + 3\x_3, \\
				&\quad
					-7\y_2 \ge -10 + 3\x_1 + 2 \x_2 + \x_3.
		\end{align}
	\end{subequations}
        Figure \ref{fig:approx_dd}-(c) depicts an approximate value network for $\val$ above. In particular, note that $\val(\xvec) = -200$ for $\xvec = (0,1,0)$, which has an optimal follower's solution value of $\yvec=(1,1)$. The corresponding path in Figure \ref{fig:approx_dd}-(c) crosses nodes $\initnode$, $u_{11}$, $u_{21}$, and $u_{31}$, the latter providing an upper bound on the value function of $0$. Using such a network in the reformulation of \eqref{vfr_ext:cons:3} provides a lower bound to the bilevel problem. \hfill $\square$
\end{example}

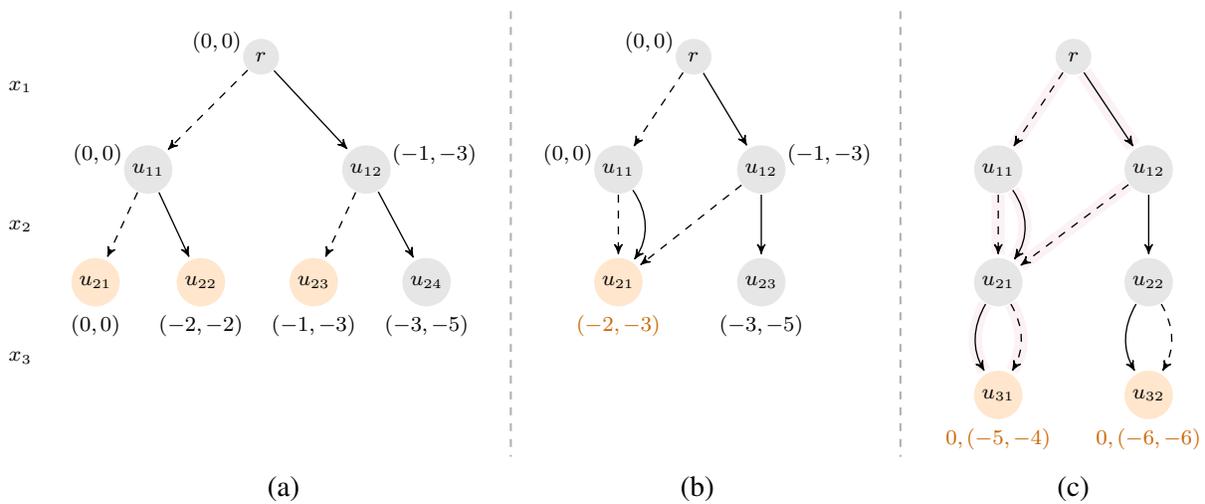
\begin{figure}[t]
    \begin{center}

        \tikzstyle{main node} = [circle,fill=gray!50,font=\scriptsize, inner sep=1pt]            
        \tikzstyle{text node} = [font=\scriptsize]
        \tikzstyle{arc text} = [font=\scriptsize]
        \tikzstyle{zero arc} = [draw,dashed, line width=0.5pt,->]
        \tikzstyle{one arc} = [draw,line width=0.5pt,->]
                
        \begin{tikzpicture}[->,>=stealth',shorten >=1pt,auto,node distance=1cm,
            thick]        
            \tikzstyle{node label} = [circle,fill=gray!20,font=\scriptsize, inner sep=2pt]


            \node[node label, label={[xshift=0.35em, yshift=0.5em, font=\scriptsize] left:$(0,0)$}] (r) at (-4.3,0) {$\;\initnode\;$};

            \node[node label, label={[xshift=0.35em, yshift=0.5em, font=\scriptsize] left:$(0,0)$}] (u11)  at (-5.8,-1.5)  {$u_{11}$};
            
            \node[node label, label={[xshift=-0.35em, yshift=0.5em, font=\scriptsize] right:$(-1,-3)$}] (u12)  at (-2.9,-1.5)  {$u_{12}$};

            \node[node label, fill=orange!20, label={[xshift=0em, yshift=-1.5em, font=\scriptsize] center:$(0,0)$}] (u21)  at (-6.5,-3) {$u_{21}$};

            \node[node label, fill=orange!20, label={[xshift=0em, yshift=-1.5em, font=\scriptsize] center:$(-2,-2)$}] (u22)  at (-5.1,-3) {$u_{22}$};

            \node[node label, fill=orange!20, label={[xshift=0em, yshift=-1.5em, font=\scriptsize] center:$(-1,-3)$}] (u23)  at (-3.6,-3) {$u_{23}$};

            \node[node label, label={[xshift=0em, yshift=-1.5em, font=\scriptsize] center:$(-3,-5)$}] (u24)  at (-2.1,-3) {$u_{24}$};

            \path[every node/.style={font=\sffamily\small}]
            (r)                     
                edge[zero arc] (u11)
                edge[one arc] (u12)
            (u11)
                edge[zero arc] (u21)
                edge[one arc] (u22)
            (u12)
                edge[zero arc] (u23)
                edge[one arc] (u24)
            ;


            \node[node label, label={[xshift=0.35em, yshift=0.5em, font=\scriptsize] left:$(0,0)$}] (r2) at (1.45,0) {$\;\initnode\;$};

            \node[node label, label={[xshift=0.35em, yshift=0.5em, font=\scriptsize] left:$(0,0)$}] (v11)  at (0.45,-1.50)  {$u_{11}$};
            \node[node label, label={[xshift=-0.35em, yshift=0.5em, font=\scriptsize] right:$(-1,-3)$}] (v12)  at (2.35,-1.50)  {$u_{12}$};

            \node[node label, fill=orange!20, label={[xshift=0em, yshift=-1.5em, font=\scriptsize, color=orange!80!black] center:$(-2,-3)$}] (v21)  at (0.45,-3) {$u_{21}$};

            \node[node label, label={[xshift=0em, yshift=-1.5em, font=\scriptsize] center:$(-3,-5)$}] (v23)  at (2.35,-3) {$u_{23}$};

            \path[every node/.style={font=\sffamily\small}]
            (r2)                     
                edge[zero arc] (v11)
                edge[one arc] (v12)
            (v11)
                edge[zero arc] (v21)
                edge[one arc, bend left=35] (v21)
            (v12)
                edge[zero arc] (v21)
                edge[one arc] (v23)
            ;


            \node[node label] (r2) at (6.5,0) {$\;\initnode\;$};

            \node[node label] (v11)  at (5.5,-1.50)  {$u_{11}$};
            \node[node label] (v12)  at (7.5,-1.50)  {$u_{12}$};

            \node[node label] (v21)  at (5.5,-3) {$u_{21}$};
            \node[node label] (v23)  at (7.5,-3) {$u_{22}$};

            \node[node label, fill=orange!20, label={[xshift=0em, yshift=-1.5em, font=\scriptsize, color=orange!80!black] center:$0, (-5,-4)$}] (v31)  at (5.5,-4.5) {$u_{31}$};
            \node[node label, fill=orange!20, label={[xshift=0em, yshift=-1.5em, font=\scriptsize, color=orange!80!black] center:$0, (-6,-6)$}] (v32)  at (7.5,-4.5) {$u_{32}$};

            \path[every node/.style={font=\sffamily\small}]
            (r2)
                edge[-, thick, line width=5, purple!5!white] (v11)
                edge[zero arc] (v11)

                edge[-, thick, line width=6, purple!5!white] (v12)
                edge[one arc] (v12)
            (v11)
                edge[-, thick, line width=5, purple!5!white] (v21)
                edge[zero arc] (v21)
                
                edge[-, thick, line width=5, purple!5!white, bend left=35] (v21)
                edge[one arc, bend left=35] (v21)
            (v12)
                edge[-, thick, line width=5, purple!5!white] (v21)
                edge[zero arc] (v21)

                edge[one arc] (v23)
            (v21)
                edge[-, thick, line width=5, purple!5!white, bend left=30] (v31)                
                edge[zero arc, bend left=30] (v31)

                edge[-, thick, line width=5, purple!5!white, bend right=30] (v31)
                edge[one arc, bend right=30] (v31)
            (v23)
                edge[zero arc, bend left=30] (v32)
                edge[one arc, bend right=30] (v32)
            ;	


            \node (sepA) at (-0.98, 0.75) {};
            \node (sepB) at (-0.98, -5.5) {};
            \draw[dashed, -, color=black!30] (sepA) -- (sepB);


            \node (sepA) at (4.2, 0.75) {};
            \node (sepB) at (4.2, -5.5) {};
            \draw[dashed, -, color=black!30] (sepA) -- (sepB);


            \node[font=\scriptsize] (x1) at (-7.5, -0.4) {$\x_1$} ;
            \node[font=\scriptsize] (x2) at (-7.5, -2.25) {$\x_2$} ;
            \node[font=\scriptsize] (x3) at (-7.5, -4) {$\x_3$} ;


            \node(item1) at (-4, -5.75) {(a)} ;
            \node(item2) at (1.5, -5.75) {(b)} ;
            \node(item3) at (6.5, -5.75) {(c)} ;

        \end{tikzpicture}        
    \end{center}
\caption{Generation steps of an approximation value network for the value function $\val$ in \eqref{eq:multiknapsackexample_approximation} (Examples \ref{ex:appvdd} and \ref{ex:approximation_example_construction}). Coloured nodes and states in Figures (b) and (c) depict aggregated/merged nodes and the smallest value of their relaxed states (i.e., $\nodelbvec$), respectively. Terminal values in (c) depict the value-function upper bound derived from such states. Shaded arcs represent the sub-network induced by the $(\initnode, \node_{31})$ paths.}
    \label{fig:approx_dd} 
\end{figure}

\subsection{Generating Approximations}
\label{subsec:stateagg}

Assume we are given a budget of $\budget$ nodes per layer previously fixed, e.g., according to memory or runtime specifications. The conditions (D1) and (D2) from Definition \ref{def:avdd} lead to a natural two-step approach to generate an approximate value network:
\begin{enumerate}
	\item Construct a network $\net$ that observes the budget $\budget$ and such that all leader's solution $\xvec$ for which $\val(\xvec) < +\infty$ are represented by some path in $\net$.
	\item For each terminal node $\node \in \nodeset_{\nld+1}$ in the resulting network, compute an upper bound $\hat{\edgeval{}}$ on the follower's values associated with the labels $\xvec$ of all $(\initnode, \node)$ paths, and set $\edgeval_{\node} = \hat{\edgeval{}}$.
\end{enumerate}

\smallskip

To construct approximations according to the two steps above, we first generalize the concept of state-based networks by reinterpreting each node as a set of states as opposed to a unique state. Specifically, recall the set of component-wise reachable states $\stateset_1, \dots, \stateset_{\nld+1}$ defined in \S\ref{subsec:statebasednet}, where $\stateset_{\nld+1} = \stateset$ for the feasible state set $\stateset$ as in \eqref{eq:statedef}. Lemma \ref{lem:statevalueapprox} is parallel to the constructive result from Lemma \ref{lem:statevalue} by considering instead a family of sets that contain $\stateset_j$, $j \in \augindexset$, as follows.

\begin{lemma}
	\label{lem:statevalueapprox}
	Let $\net_{A} = (\nodeset, \edgeset, \edgevalvec)$ be a value network satisfying 
	Definition \ref{def:valuedd}-(A),(B) where 
	\begin{enumerate}[label=(\alph*)]
		\item Each node $\Node \in \nodeset_j$, $j \in \augindexset$, is a set $\Node \subseteq \mathbb{R}^{\ncons}$ of potential states.
		\item For every $\state \in \stateset_{j}$ and $\state' \in \stateset_{j+1}$ such that $\state' = \state + \al_{j} \x$ for some $\x \in \{0,1\}$, $j \in \indexset$, there exists an edge $(\Node, \Node', \x) \in \edgeset$ where $\state \in \Node$ and $\state' \in \Node'$ for two nodes $\Node \in \nodeset_j$, $\Node' \in \nodeset_{j+1}$. 		
		\item $\edgeval_{\Node} = \max_{\state \in \Node}\vale(\state)$ for all $\Node \in \nodeset_{\nld+1}$.
	\end{enumerate}
	\smallskip
	Then, $\net_{A}$ is an approximate value network for $\val$.
\end{lemma}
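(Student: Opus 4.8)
The plan is to check that $\net_A$ meets the two requirements of Definition~\ref{def:avdd} that go beyond conditions (A)--(B) of Definition~\ref{def:valuedd} --- namely path feasibility (D1) and objective feasibility (D2) --- since (A) and (B) hold by hypothesis. The engine of the argument is a layer-by-layer tracking of the partial interaction sums of a fixed feasible leader decision, showing that they trace out a root--terminal path of $\net_A$ with exactly the required labels, and that this path ends at a node whose terminal value dominates the follower's value.

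Concretely, I would fix any $(\xvec,\val(\xvec)) \in \extval$, so that $\vale(\Al\xvec) = \val(\xvec) < +\infty$ and hence $\Al\xvec \in \stateset = \stateset_{\nld+1}$, and set $\statevec^{(j)} := \sum_{j' < j} \al_{j'} \x_{j'}$ for $j \in \augindexset$, with $\statevec^{(1)} = \zeros$ and $\statevec^{(\nld+1)} = \Al\xvec$. By the description of the component-wise reachable sets in \S\ref{subsec:statebasednet} --- each element of $\stateset_j$ is a partial sum $\al_1\x_1 + \dots + \al_{j-1}\x_{j-1}$ that can be completed to a feasible state --- every $\statevec^{(j)}$ belongs to $\stateset_j$. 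I would then show, by induction on $j \in \augindexset$, that $\net_A$ contains an $(\initnode,\node_j)$-path with edge labels $(\x_1,\dots,\x_{j-1})$ whose endpoint $\node_j$ satisfies $\statevec^{(j)} \in \node_j$. The base case $\node_1 = \initnode$ reduces to $\zeros \in \initnode$, which follows from condition (b) applied to $\statevec^{(1)} = \zeros \in \stateset_1$ and $\statevec^{(2)} \in \stateset_2$ together with $\nodeset_1 = \{\initnode\}$. For the step, from $\statevec^{(j)} \in \node_j \cap \stateset_j$ and $\statevec^{(j+1)} = \statevec^{(j)} + \al_j\x_j \in \stateset_{j+1}$, condition (b) yields an edge $(\Node,\Node',\x_j) \in \edgeset$ with $\statevec^{(j)} \in \Node \in \nodeset_j$ and $\statevec^{(j+1)} \in \Node' \in \nodeset_{j+1}$; since a reachable state lies in a unique node of its layer (the layer's node sets partition $\stateset_j$) we get $\Node = \node_j$, and since property (B) makes the $\x_j$-labeled out-edge of $\node_j$ unique, we may set $\node_{j+1} := \Node'$ and extend the path. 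Evaluating the claim at $j = \nld+1$ gives (D1): following the labels $\xvec$ from $\initnode$ reaches a terminal node $\node_{\nld+1}$. Condition (D2) is then immediate from condition (c), because $\Al\xvec = \statevec^{(\nld+1)} \in \node_{\nld+1}$ implies $\edgeval_{\node_{\nld+1}} = \max_{\statevec \in \node_{\nld+1}} \vale(\statevec) \ge \vale(\Al\xvec) = \val(\xvec)$.

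The delicate point --- and the one I expect to require the most care --- is the inductive extension of the path: condition (b) guarantees that \emph{some} edge with label $\x_j$ connects a node containing $\statevec^{(j)}$ to a node containing $\statevec^{(j+1)}$, and to glue this onto the path constructed so far one must know that the node carrying $\statevec^{(j)}$ is precisely the current endpoint $\node_j$. I would close this gap by invoking the structure of the state-aggregation construction of \S\ref{subsec:stateagg}, under which the node sets of a layer partition $\stateset_j$ so that each reachable state is assigned to exactly one node (equivalently, one reads condition (b) as asserting the edge between the unique nodes holding $\statevec$ and $\statevec'$). The remaining pieces --- inheriting (A) and (B) from the hypothesis, noting that ``the path associated with $\xvec$'' is well defined thanks to (B), and obtaining (D2) from the $\max$ in condition (c) --- are routine.
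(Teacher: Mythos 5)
Your argument is essentially the paper's own proof: track the partial sums $\statevec^{(j)} \in \stateset_{j}$ of a fixed feasible $\xvec$, invoke condition (b) layer by layer to produce the labelled edges (yielding (D1)), and use condition (c) together with $\Al\xvec \in \node_{\nld+1}$ to conclude $\edgeval_{\node_{\nld+1}} = \max_{\state \in \node_{\nld+1}} \vale(\state) \ge \vale(\Al\xvec) = \val(\xvec)$ (yielding (D2)). The gluing subtlety you flag --- that the edge supplied by (b) must emanate from the node already reached by the partial path --- is precisely the step the paper compresses into ``since $j$ is arbitrary, there exists a path with labels $\xvec$,'' so your explicit induction with the one-node-per-reachable-state reading of (b) is a more careful rendering of the same implicit argument rather than a different route.
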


Lemma \ref{lem:statevalueapprox} states that, for the purpose of constructing a valid approximate network, it suffices to have nodes as sets that cover the feasible states $\stateset_1, \dots, \stateset_{\nld+1}$, that is, \textit{state aggregations}, in addition to an edge structure that captures the corresponding state transitions. 

\subsubsection{Aggregation via Hyperrectangles.}

The result from Lemma \ref{lem:statevalueapprox} holds for any state aggregation set structure. Nonetheless, the representation and operations over such aggregated states, including the optimization in Lemma \ref{lem:statevalueapprox}-(c), must also be tractable to ensure that the network can be constructed within reasonable computational limits. 
To this end, we consider a polynomially sized representation of each node $\Node$ as a vector pair $(\nodelbvec, \nodeubvec)$ for $\nodelbvec, \nodeubvec \in \mathbb{R}^{\ncons}$, $\nodelbvec \le \nodeubvec$. In particular, $\nodelbvec$ and $\nodeubvec$ prescribe the lower bound and upper bounds, respectively, of the values of the each component $\{1,\dots,\ncons\}$ in the state set captured by $\Node$. That is, a node $(\nodelbvec, \nodeubvec)$ equivalently represents the $\ncons$-dimension hyperrectangle
$$
[\nodelb_1, \nodeub_1] \times [\nodelb_2, \nodeub_2] \times \dots [\nodelb_{\ncons}, \nodeub_{\ncons}] \subseteq \mathbb{R}^{\ncons}.
$$

We denote by $[\nodelbvec, \nodeubvec]$ the hyperrectangle implied by $\nodelbvec$ and $\nodeubvec$, as depicted above. Such a representation provides us with three properties to generate relaxations more efficiently. First, state transitions are closed under addition. That is, consider two states $\state \in \stateset_j$ and $\state' \in \stateset_{j+1}$, $j \in \indexset$, such that $\state' = \state + \al_{j} \x$ for some $\x \in \{0,1\}$ as in Lemma \ref{lem:statevalueapprox}-(b). Then,
\begin{align}
\label{eq:transitionupdate}
\state \in [\nodelbvec, \nodeubvec] \Longrightarrow \state' \in [ \nodelbvec + \al_j \x, \nodeubvec + \al_j \x ],
\end{align}
which follows by construction. In other words, if $\Node = (\nodelbvec, \nodeubvec)$, the transitions from all feasible states embedded in $\Node$ can be captured by a node $\Node' = ( \nodelbvec + \al_j \x, \nodeubvec + \al_j \x )$ and an edge $(\Node, \Node', \x)$ that links them. 

Second, we can efficiently build a superset with the same structure that contains the union of two box sets, which is key for node merging procedures. Specifically, for any pairs $(\nodelbvec_1, \nodeubvec_1)$
and $(\nodelbvec_2, \nodeubvec_2)$, let $\nodelbvec' := \left( \min \{\nodelb_{1,i}, \, \nodelb_{2,i}\} \right)_{i=1,\dots,\ncons}$ be the vector defined by the component-wise minimum of $\nodelbvec_1$ and $\nodelbvec_2$, and $\nodeubvec' := \left ( \max \{\nodeub_{1,i}, \, \nodeub_{2,i}\} \right )_{i=1,\dots,\ncons}$ be the component-wise maximum of $\nodeubvec_1$ and $\nodeubvec_2$. Then, 
\begin{align}
	\label{eq:unionupdate}
	\state \in [\nodelbvec_1, \nodeubvec_1] \cup [\nodelbvec_2, \nodeubvec_2] \Longrightarrow  \state \in [\nodelbvec', \nodeubvec'],
\end{align}
that is, $[\nodelbvec',\nodeubvec']$ preserves all states in $[\nodelbvec_1,\nodeubvec_1]$ and $[\nodelbvec_2,\nodeubvec_2]$.

Finally, for Lemma \ref{lem:statevalueapprox}-(c), the terminal node value of $\Node \in \nodeset_{\nld+1}$ is more directly calculated by 
\begin{align}
	\label{eq:optupdate}
	\edgeval_{\Node} = \max_{\state \in \Node} \vale(\state)
	= \max_{\state \in [\nodelbvec, \nodeubvec]} \vale(\state) 
	= \vale(\nodelbvec),
\end{align}
since any $\yvec \in \yvecset$ such that $\Bf \yvec \ge \rhsf - \nodelbvec$ also satisfies $\Bf \yvec \ge \rhsf - \state$ for all $\state \in [\nodelbvec, \nodeubvec]$, i.e., the follower's subproblem conditioned on states in $\Node$ is most restricted when $\state = \nodelbvec$. 

\subsubsection{Relaxation Procedure.} Building on the tractable hyperrectangle properties \eqref{eq:transitionupdate}-\eqref{eq:optupdate} above, we present the approximate network construction procedure in Algorithm \ref{algo:approxcompilation}. The methodology generates layers $\nodeset_1, \nodeset_2, \dots, \nodeset_{\nld+1}$ one at a time, starting with $\nodeset_1 = \{ (\nodelbvec_0, \nodeubvec_0) \}$ for $(\nodelbvec_0, \nodeubvec_0) = (\zeros,\zeros)$. For each layer $j \in \indexset$, three operations are performed in sequence:

\begin{enumerate}
	\item (\textit{Node generation, steps 4-6.}) For each node $\Node = (\nodelbvec, \nodeubvec) \in \nodeset_{j}$ and $\x \in \{0,1\}$, we create a new node $\Node' = (\nodelbvec + \al_{j}\x, \nodeubvec+ \al_{j}\x)$ in $\nodeset_{j+1}$ to preserve the transitions associated with $\state \in \stateset_{j} \cap [\nodelbvec, \nodeubvec]$ and $\x$ according to \eqref{eq:transitionupdate}, and add the corresponding edge to $\edgeset$.
	
	\smallskip
	\item (\textit{Node aggregation, steps 7-9.}) If the size of $\nodeset_{j+1}$ exceeds the budget $\budget$, we perform an additional state aggregation step to reduce the number of nodes in the layer. Specifically, for two selected nodes $(\nodelbvec_1, \nodeubvec_1)$
	$(\nodelbvec_2, \nodeubvec_2)$, we create a new node $[\nodelbvec',\nodeubvec']$ as in \eqref{eq:unionupdate} and remove the two first nodes, repeating the procedure until the budget $\budget$ is observed. Possible node selection strategies are discussed in \S\ref{sec:numericalstudy}. 
	
	\smallskip
	\item (\textit{Objective and Reduction, steps 10-12}.) Finally, to ensure objective feasibility, terminal node values are calculated according to \eqref{eq:optupdate} in step 11. The network is also reduced to rule out symmetric nodes.
\end{enumerate}

\begin{algorithm}[t]	
	\begin{algorithmic}[1] 
	\Procedure{GenerateApproximation}{Budget $\budget$}
	\State $\nodeset_{1} \gets (\nodelbvec_0, \nodeubvec_0)$, where $(\nodelbvec_0, \nodeubvec_0) = (\zeros, \zeros)$, and $\nodeset_{2}, \dots, \nodeset_{\nld+1}, \edgeset \gets \emptyset$
	\For{$j = 1, \dots, \nld$}		
		\For{$(\nodelbvec, \nodeubvec) \in \nodeset_{j}$ and $\x \in \{0,1\}$}		
			\Comment{\textit{Node generation steps}}		
			\State Add $(\nodelbvec', \nodeubvec') \gets (\nodelbvec + \al_{j}\x, \nodeubvec + \al_{j}\x)$ to $\nodeset_{j+1}$ (if it does not exist)
			\State Add edge $((\nodelbvec, \nodeubvec),(\nodelbvec', \nodeubvec'),\x)$ to $\edgeset$.
		\EndFor		
		\While{$|\nodeset_{j+1}| > \budget$}
			\Comment{\textit{Node merging steps.}}		
			\State Select and remove $(\nodelbvec_1, \nodeubvec_1)$, $(\nodelbvec_2, \nodeubvec_2)$ from $\nodeset_{j+1}$
			\State Add $(\nodelbvec', \nodeubvec') \gets ( \left( \min \{\nodelb_{1,i}, \, \nodelb_{2,i}\} \right)_{i=1,\dots,\ncons}, \left ( \max \{\nodeub_{1,i}, \, \nodeub_{2,i}\} \right )_{i=1,\dots,\ncons} )$
		\EndWhile
	\EndFor
	\For{$(\nodelbvec, \nodeubvec)  \in \nodeset_{\nld+1}$}
		\Comment{\textit{Calculate upper bound on terminal paths}}		
		\State $\edgeval_{(\nodelbvec, \nodeubvec)} \gets \vale(\nodelbvec)$
	\EndFor
	\State \textbf{return} \textsc{Reduce}$\left ( \net \gets (\nodeset, \edgeset, \edgeval) \right )$
	\EndProcedure
	\end{algorithmic}
 	\caption{Approximate Value network Construction}
	\label{algo:approxcompilation}
\end{algorithm}

\smallskip
The network obtained by Algorithm \ref{algo:approxcompilation} is a valid reduced approximate value network with at most $\budget$ nodes per layer. In particular, given that $\nodeset_1 = \{(\zeros,\zeros)\}$, the first layer correspond exactly to $\stateset_1$. Thus, Lemma \ref{lem:statevalueapprox}-(b) is satisfied for $\nodeset_{1}$ and $\nodeset_{2}$ (i.e., $j=1$) because of the relation \eqref{eq:transitionupdate}. The same argument can be repeated for $j=2, \dots, \nld$, given that no states are lost from node merging due to relation \eqref{eq:unionupdate}. Finally, Lemma \ref{lem:statevalueapprox}-(c) is satisfied because of \eqref{eq:optupdate}. Note also that, if the budget $\budget$ is sufficiently high, the resulting approximation matches an exact value network. 

\begin{example}
	\label{ex:approximation_example_construction}
	Figures \ref{fig:approx_dd}-(a) to (c) represent different steps of the relaxation procedure       for the value function $\val$ in \eqref{eq:multiknapsackexample_approximation} with budget $\budget = 2$. Labels at each node represent the vector $\nodelbvec$. Specifically, for a fixed $\xvec = (\x_1, \x_2, \x_{3})$, its associated state is
\begin{align*}
    \statevec = \Al \xvec =
    \left[
        \begin{array}{ccc}
        -1 & -2 & -3 \\
        -3  & -2 & -1 \\
        \end{array}
    \right]
    \cdot 
    \left [
        \begin{array}{c}
        \x_1 \\
        \x_2 \\
        \x_3 
        \end{array}
    \right ]
    =
    \left[
        \begin{array}{ccc}
        -1\\
        -3 \\
        \end{array}
    \right]
    \x_1
    +
    \left[
        \begin{array}{ccc}
        -2  \\
        -2 \\
        \end{array}
    \right]
    \x_2 
    +
    \left[
        \begin{array}{ccc}
        -3 \\
        -1 \\
        \end{array}
    \right]
    \x_3.
\end{align*}
The state updates to generate layer $j \ge 2$ are prescribed by the column vectors associated with $\x_{j-1}$ above. Figure \ref{fig:approx_dd}-(a) represents the stage at which the number of states exceeds the budget $B$. Figure \ref{fig:approx_dd}-(b) is the result of merging nodes $u_{21}$, $u_{22}$, and $u_{23}$ in steps 7-9 of Algorithm \ref{algo:approxcompilation}; in particular, the lower bound for the first and second components of the merged nodes are $\nodelb_1 = \min\{0, -2, -1\} = -2$
and $\nodelb_2 = \min\{0, -2, -3\} = -3$, respectively. Figure \ref{fig:approx_dd}-(c) is the final value network (with no reductions) after merging the terminal states. The value of node $\node_{31}$ is $\edgeval_{\node_{31}} = \vale(\nodelbvec) = \vale((-5,-4) = 0$, since the only feasible follower's solution for such a state is $\yvec = (0,0)$. \hfill $\square$ 
 
\end{example}

\subsection{Strengthening Terminal Node Values}
\label{subsec:robust}

As suggested by Example \ref{ex:approximation_example_construction}, the approximation procedure relaxes state information iteratively during construction to ensure the network remains small, and could lead to potentially loose upper bounds. In this section, we further leverage the network structure to strengthen node terminal values of an approximate value network $\net_A$. Specifically, let $\node \in \nodeset_{\nld+1}$ be a terminal node in $\net_A$, and consider the sub-network $\net(\node) = (\nodeset(\node), \edgeset(\node))$ induced by the $(\initnode, \node)$-paths in $\net_A$; formally, 
\begin{align*}
	\nodeset(\node) &:= \left\{ 
		v \in \nodeset \colon \exists\,\, \textnormal{path} \,\,\, (\initnode, \node_1, \x_1), \dots, (\node_{\nld}, \x_{\nld}, \node_{\nld+1}) \in \net_{A}, \node_{\nld+1} = \node, \,\, \node_k = v 
		\,\,\, \textnormal{for some $k$}
	\right \},\\
	\edgeset(\node) &:= \left\{
		(v, v', \x) \colon v, v' \in \nodeset(\node), \,\, (v, v', \x) \in \edgeset
	\right\}.
\end{align*}
For example, the shaded arcs in Figure \ref{fig:approx_dd}-(c) depict $\net(\node_{31})$. Analogous to the original system, the sub-network $\net(\node)$ is associated with a subset of leader's decisions represented by their path labels,
\begin{align*}
	\mathscr{X}(\node) = 
		\{ 
			\xvec \in \{0,1\}^{\nld} 
			\, \colon \,
			\exists
			\, 
			(\initnode, \node_1, \x_1),
			(\node_2, \node_3, \x_2),
			\dots,
			(\node_{\nld}, \node, \x_{\nld}) 
			\in
			\net(\node)
	  	\},
\end{align*}
which is distinct per terminal node $\node$. In view of the objective feasibility of Definition \ref{def:avdd}-(c2), the tighest terminal value of $\node$ that ensures objective feasibility is  therefore
\begin{align}
	\hat{\edgeval}_{\node} 
	&:= 
	\max_{\xvec \in \mathscr{X}(\node)} \val(\xvec) \nonumber \\
	&=
	\max_{\xvec \in \mathscr{X}(\node)} 
	\min_{\yvec} 
	\left\{ 
		g(\yvec) 
		\,\colon\,
		\Bf \yvec \ge \rhsf - \Al \xvec, \;\; \yvec \in \yvecset
	\right\}. \label{model:robust}
\end{align}
Problem \eqref{model:robust} is often referred to as a network interdiction problem (e.g., \citealt{borrero2019sequential, fischetti2019interdiction}). In particular, the outer problem selects a path in $\net(\Node)$ to maximize gains against an inner problem (or adversary) where actions are constrained by such path. We refer to the survey by \cite{smith2020survey} for recent models and solution approaches. 

Using $\hat{\edgeval}_{\node}$ as a terminal value can significantly strengthen the quality of the bound, as illustrated below.

\begin{example}
    \label{example:boundstrengthening}
    Consider the sub-network $\net(\node_{31})$ represented by the shaded arcs in Figure \ref{fig:approx_dd}-(c). The upper bound of $0$ observed at node $\node_{31}$ is due to the relaxed state $\nodelbvec = (-5,-4)$, which cannot be derived by any of the paths $\mathscr{X}(\node_{31})$ and is therefore excessively relaxed. Alternatively, when solving \eqref{model:robust} by inspection, the largest upper bound can be obtained by setting $\xvec = (1,0,1)$; in such a case, $\val(\xvec) = -100$ with an optimal follower's solution $\yvec = (\y_1, \y_2) = (1,0)$, and hence we can set $\edgeval_{31} = -100$. This also shows that the improvement can be arbitrarily large by considering any desired negative coefficient for $\y_1$. \hfill $\square$
\end{example}

To strengthen the value of the terminal $\node$ encoding the hyperrectangle $[\nodelbvec, \nodeubvec]$, we apply a cutting-plane methodology to \eqref{model:robust} proposed by \cite{lozano2017b} that iteratively improves an upper bound to $\hat{\edgeval}_{\node}$, and can be halted at any given computational limit. Specifically, let $\yvecset^S = \{\yvec_1, \yvec_2, \dots \yvec_K\} \subseteq \yvecset$ be a sample of $K$ follower's solutions within the set $\yvecset$. Then,
\begin{align}
	\hat{\edgeval}_{\node} 
	&=
	\max_{\xvec \in \mathscr{X}(\node)} 
	\min_{\yvec} 
	\left\{ 
		g(\yvec) 
		\,\colon\,
		\Bf \yvec \ge \rhsf - \Al \xvec, \;\; \yvec \in \yvecset
	\right\} \nonumber \\
	&\le
	\max_{\xvec \in \mathscr{X}(\node)} 
	\min_{\yvec} 
	\left\{ 
		g(\yvec) 
		\,\colon\,
		\Bf \yvec \ge \rhsf - \Al \xvec, \;\; \yvec \in \yvecset^S
	\right\}
	=: \tilde{\edgeval}_{\node},
	\label{eq:sampleinterdiction}
\end{align}
as the solution space of the follower's subproblem becomes more restricted when limiting the feasible space to samples. Since $\yvecset^S$ is discrete, we can reformulate \eqref{eq:sampleinterdiction} as a mixed-integer linear program (see Appendix \ref{app:formulations}). Our strengthening procedure can be written as follows, which is conducted for each node $\node$:
\begin{enumerate}
	\item Initialize $\yvecset^S$ by sampling $K > 0$ vectors from $\yvecset$ and solve \eqref{model:milpsample}, collecting its optimal solution $\xvec^*$ and value $\tilde{\edgeval}_{\node}$. 
	Set $\edgeval_{\node}$ to its new upper bound $\tilde{\edgeval}_{\node}$.
	
	\item Solve $\val(\xvec^*)$ and collect its optimal solution $\yvec^*$. If $\yvec^*$ is already in the sample set $\yvecset^S$, stop; otherwise, add to $\yvecset^S$ and repeat steps 1 and 2 until a desired computational limit.
\end{enumerate}

In particular, the procedure iteratively augments the sample set $\yvecset^S$ to better approximate $\yvecset$ by adding vectors $\yvec$ that are still feasible to $\xvec^*$, but were missed in the original sample set. If no such solution is found, then $\tilde{\edgeval}_{\node} = \hat{\edgeval}_{\node}$ and we obtain the tightest upper bound to the value of $\node$. Further, the procedure converges to the optimal value if $\yvecset$ is finite and discrete. 

However, for large networks, the robust problem \eqref{eq:sampleinterdiction} could still be challenging to optimize. We also observed significant bound improvements when considering a relaxed version that only accounts for the possible state set $[\nodelbvec, \nodeubvec]$ of node $\node$; i.e., note that 
\begin{align}
	\tilde{\edgeval}_{\node} 
	&=
		\max_{\xvec \in \mathscr{X}(\node)} 
		\min_{\yvec} 
		\left\{ 
			g(\yvec) 
			\,\colon\,
			\Bf \yvec \ge \rhsf - \Al \xvec, \;\; \yvec \in \yvecset^S
		\right\} \\
	&\le
		\max_{\xvec \colon \Al \xvec \in [\nodelbvec, \nodeubvec]} 
		\min_{\yvec} 
		\left\{ 
			g(\yvec) 
			\,\colon\,
			\Bf \yvec \ge \rhsf - \Al \xvec, \;\; \yvec \in \yvecset^S
		\right\}.
	\label{eq:sampleinterdictionhyperrect}
\end{align}
More specifically, we still obtain a valid bound if the set of states $\Al \xvec$ is restricted to be within the hyperrectangle $[\nodelbvec, \nodeubvec]$ of node $\node$, which leads to a much more tractable problem if $\nodeset$ is in the order of thousands, as we assess in \S\ref{sec:numericalstudy}. We also note that other improvements for efficiency are also possible, such as incorporate the leader-specific constraints \eqref{cons:linearleader} into the outer-problem of the formulation above.








\section{Numerical Study}
\label{sec:numericalstudy}

In this section, we conduct a computational study to evaluate the quality of the proposed relaxations and their impact on solution performance. In view of the wide availability of MIBLP solvers, our study focuses on insights for problems with linear constraints, linear objectives, and binary variables:
\begin{subequations}
	\label{model:miblp}
	\begin{align}
		\min_{\xvec, \yvec}  
			&\quad 
			\mathbf{c}^{\top} \xvec + \mathbf{p}^{\top} \yvec
				\label{miblp:obj} \\
		\textnormal{s.t.}
			&\quad
				\Gx \xvec + \Gy \yvec \ge \hvec, 
					\label{miblp:cons:1} \\
			&\quad
            \yvec \in \argmin_{\yvec'} 
            \left\{ 
                \mathbf{d}^{\top} \yvec' 
                \,\colon\,
                \Al \xvec + \Bf \yvec' \ge \rhsf, \;\; \yvec' \in \{0,1\}^{\nf}
            \right\}, \label{miblp:cons:2} \\ 
			&\quad
				\xvec \in \{0,1\}^{\nld}.
					\label{miblp:cons:3}
	\end{align}
\end{subequations}

The objective of this experimental analysis is to (a) assess the strength of the bounds provided by an approximate value network under varying input parameters; and (b) investigate how such bounds improve runtimes when incorporated into existing solvers, whenever applicable. The study considers the following state-of-the-art MIBLP solvers for the analysis:
\begin{enumerate}
	\item the branch-and-cut approach by \cite{tahernejad2020branch}, denoted by \MibS{} (v. 1.2.1), an open-source COIN-OR implementation that combines bilevel relaxations and enumerative techniques; and
	\item the branch-and-cut approach by \cite{fischetti2017new}, denoted by \BC{}, based on a large family of intersection cuts and associated branching strategies.
\end{enumerate}

We conduct our analysis on two instance classes, as follows. In \S\ref{sec:num:structured}, we investigate structured problems to understand trade-offs between network size, quality of the bound, and runtimes. In \S\ref{sec:num:benchmark}, we present results on a subset of a well-known bilevel benchmark proposed by \cite{fischetti2016intersection} and based on the MIPLIB 3.0 benchmark. 
Details concerning the numerical environment, solvers, and additional supporting tables are included in Appendix \ref{app:numericaldetails}. All source code and instances will be made available online.

\subsection{Structured Instances}
\label{sec:num:structured}

The strength of the bounds provided by an approximate value network $\net_A$ depends on the cardinality of set $\stateset' := \{\Al \xvec \colon \xvec \in \{0,1\}^{\nld} \}$, as its elements define states that are translated to nodes of $\net_A$. More precisely, if the budget $\budget$ is sufficiently large for the nodes of $\net_A$ to capture all vectors of $\stateset'$, the network representation and the resulting value-function reformulation \eqref{model:vfr_ext} would be exact. Otherwise, more states are potentially aggregated into the nodes of $\net_A$ during the merging process of Algorithm \ref{algo:approxcompilation}, leading to a looser relaxation with respect to terminal values due to larger intervals $[\nodelbvec,\nodeubvec]$.

To control for the cardinality of $\stateset'$, we generate random instances based on four parameters $\nld, \ncons, \alpha,$ and $\beta$ that limit the sparsity and variance of the elements of $\Al$, as follows:

\begin{itemize}
\item We consider $\nld \in \{25,50\}$ for the leader's vector dimension and $\ncons \in \{1,10,20\}$ for the number of interaction constraints. The follower's coefficients $\Gy$ and $\Bf$ are drawn independently from the discrete uniform distribution $\textnormal{U}(0,100)$. 

\item The leader's coefficients $\Al$ are set as zero with probability 0.8, and otherwise draw independently from the discrete uniform distribution $5 \times \textnormal{U}(0, \alpha)$, where $\alpha \in \{1, 3, 5\}$ controls the diversity of $\Al$. For example, with $\nld = 50$ and $\alpha = 1$, a row of $\Al$ has on average 10 elements within the set $\{0, 5\}$, i.e., each state component of $\stateset'$ is of the form $5k$ for $k=0,1,\dots,10$. This implies that there exist at most 11 possible values for every state component, and therefore $|\stateset'| \in \mathcal{O}(11^{\ncons})$ (see \S\ref{sec:sizestatebased}). For $\alpha=2$ and $\alpha=5$, we obtain $|\stateset'| \in \mathcal{O}(31^{\ncons})$ and $|\stateset'| \in \mathcal{O}(121^{\ncons})$, respectively (calculated, e.g., via a dynamic program that enumerates subset sums). The matrix $\Gx$ is generated analogously as $\Al$. 

\item The right-hand side of the interaction constraint is set as $\rhsf := \beta \times (\Al \xvec + \Bf \yvec )$, where $\beta \in \{0.1, 0.3, 0.5\}$ controls the constraint tightness. The vector $\hvec$ is generated analogously.

\item The leader's objective coefficients $\mathbf{c}$ and $\mathbf{p}$ are drawn independently at random from $\textnormal{U}(-100, -1)$, and the follower's objective coefficients $\mathbf{d}$ are drawn independently at random from $\textnormal{U}(-50,50)$. 
\end{itemize}

\smallskip
Five different problem instances were generated for each configuration $(\nld, \ncons, \alpha, \beta)$, for a total of $270$ instances. The reported gaps are with respect to the best-known upper bound obtained across all methods. We also remark that instances are expected to be more challenging for larger values of $\ncons$ and $\alpha$, as the number of possible states $\stateset'$ and follower's subproblem values increase significantly. 

Finally, we report the standard deviation in square brackets next to the corresponding average, when applicable; e.g., 20 [12.5] denotes an average of 20 with an observed standard deviation of 12.5.

\smallskip
\noindent \textit{Value network parameters.} The analysis considers two variants of an approximate value network: one using the standard methodology depicted by Algorithm \ref{algo:approxcompilation}, denoted by $\net_A$, and another applying the terminal value strengthening procedure from \S\ref{subsec:robust}, denoted by $\net^{+}_A$. The budget is set to $\budget = 50$ in all runs. For the node merging procedure of Algorithm \ref{algo:approxcompilation}, Step 7, we implemented a straightforward technique that performed well during fine-tuning, described as follows. First, if $\nodeset_{j+1} > |\budget|$, we order the nodes of the  $j+1$-th layer in decreasing order according to the longest path from the root node to every node in $\nodeset_{j+1}$, where the arc lengths are given by the leader's objective coefficient $\mathbf{c}$. Then, we merge pairs of nodes following this ordering until the width of the layer is less than or equal to $\budget$. Finally, variables in the compilation of $\net_A$ and $\net^+_A$ are ordered by the sum of their coefficients in $\Al$. 

For the terminal value strengthening of $\net^{+}_A$, we start with a small sample $\yvecset^{S}$ and run at most five iterations of the cutting-plane method described in \S\ref{subsec:robust}, considering the simplified version of the outer-optimization problem based on hyperrectangles. The details of how the initial samples are generated and supporting methods are in included in Appendix \ref{app:numericaldetails}.

\smallskip
\noindent \textit{Comparison to high-point relaxations and network sizes.} We first evaluate the improvements with respect to the high-point relaxation, referred to as \texttt{HPR}, and the impact of the reduction procedure to eliminate symmetric nodes (\S\ref{subsec:reduction}). In particular, \texttt{HPR} for \eqref{model:miblp} is defined by 
\begin{align}
	\min_{\xvec \in \{0,1\}^{\nld}, \yvec \in \{0,1\}^{\nfl}} 
	\left\{
		\mathbf{c}^{\top} \xvec + \mathbf{p}^{\top} \yvec
		\colon
		\Gx \xvec + \Gy \yvec \ge \hvec, \,\, \Al \xvec + \Bf \yvec \ge \rhsf
	\right\}.
	\label{model:miblp_hpr}
\end{align}
To assess the bound improvements with respect to \texttt{HPR}, we write the strengthened model for $\net$:
\begin{align}
	\min_{\xvec \in \{0,1\}^{\nld}, \yvec \in \{0,1\}^{\nfl}, \z} 
	\left\{
		\mathbf{c}^{\top} \xvec + \mathbf{p}^{\top} \yvec
		\colon
		\Gx \xvec + \Gy \yvec \ge \hvec, 
		\,\, \Al \xvec + \Bf \yvec \ge \rhsf,
		\,\,\z \le \mathbf{d}^{\top} \yvec,
		\,\, (\xvec, \z) \in \polynet(\net)
	\right\}.
	\label{model:hpr_net}
\end{align}
We refer to \DD{} and \DDMaxMin{} as the model \eqref{model:hpr_net} obtained when replacing $\net$ by 
$\net_A$ and $\net^+_A$, respectively. An optimal leader's solution $\xvec^*$ of \eqref{model:miblp_hpr} or \eqref{model:hpr_net} is integer and its value provides a lower bound $v^{\textnormal{LB}}$ to the optimal value $v^*$ of \eqref{model:miblp}. We denote 
the quantity $(v^* - v^{\textnormal{LB}})/v^*$ as the integer optimality gap of the model.

\begin{figure}[t!]
	\begin{subfigure}[b]{0.5\textwidth}
		\includegraphics[scale=0.45]{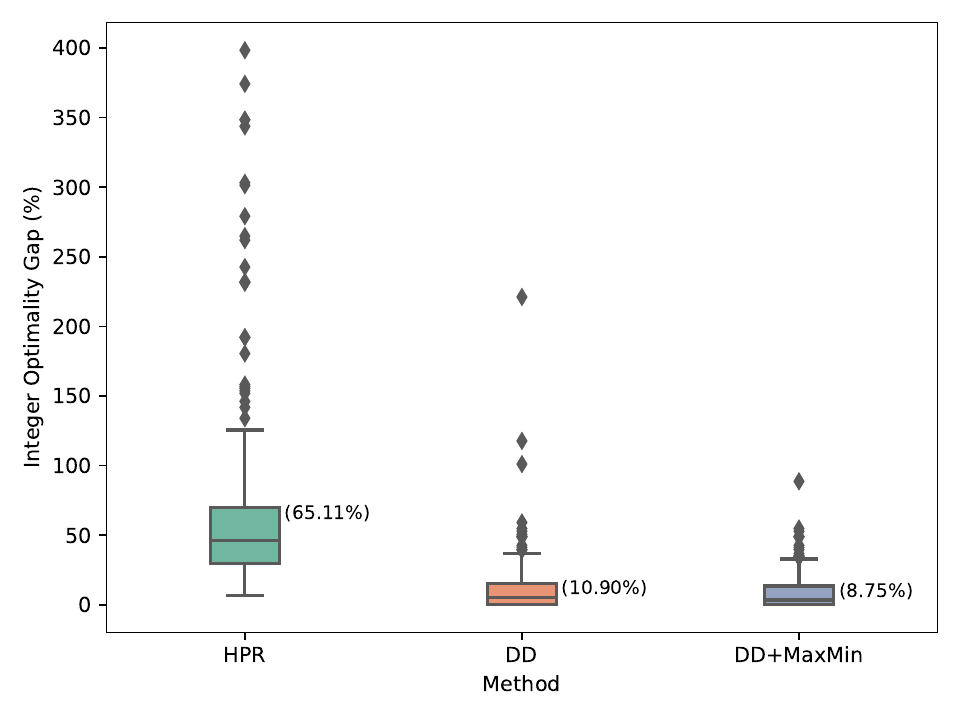} 
		\caption{Integer optimality gap.}
		\label{fig:integer_gap}
    \end{subfigure}
	\begin{subfigure}[b]{0.5\textwidth}
		\includegraphics[scale=0.45]{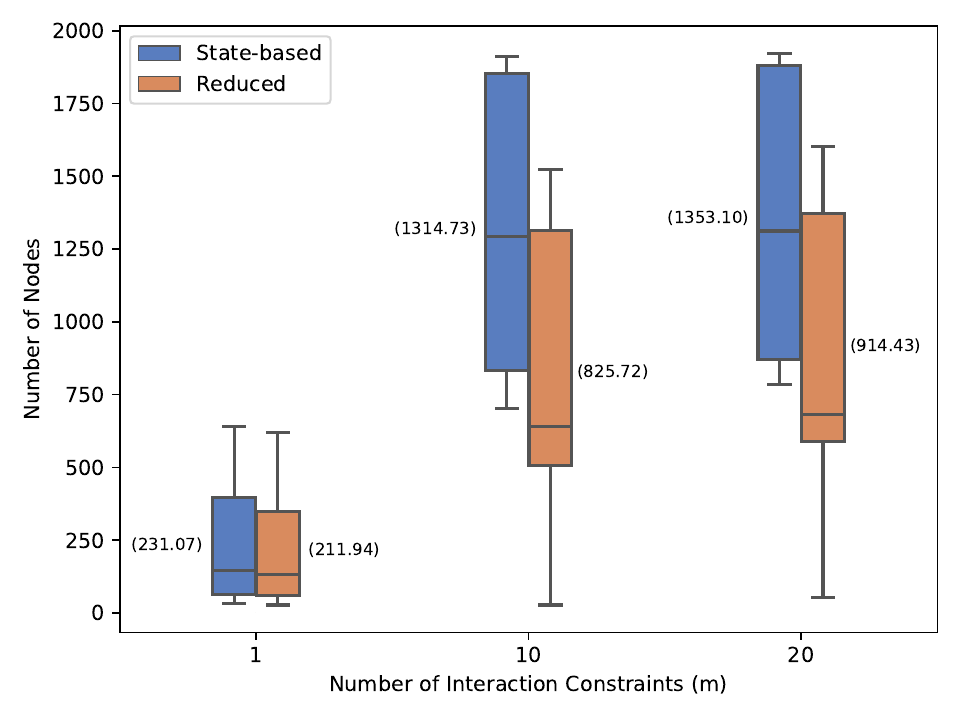} 
		\caption{Network sizes (\DD{} and \DDMaxMin{}).}
		\label{fig:reduction}
	\end{subfigure}	
    \caption{(Colored.) Box-and-whisker plots summarizing the distributions of integer optimality gaps and size of state-based and reduced networks. The box represent the interquantile range or IQR (difference between the 75\% and 25\% quantile), the lines within the boxes represent the median, the whiskers represent points within $1.5 \times$ IQR, and the remaining points are plotted individually. Values in parenthesis represent the averages.}
    \label{fig:integer_gap_reduction}
\end{figure}

Figure \ref{fig:integer_gap} depicts a box-and-whisker plot summarizing the distribution of the integer optimality gaps of each instance and model. Values in parenthesis next to each box depict averages. \texttt{HPR} provided an average optimality gap of 65.11\% [64.51\%], in contrast to an average of 10.89\% [19.84\%] for \DD{} and 8.75\% [12.49\%] for \DDMaxMin{}. Across all 270 test cases, \DD{} had a zero gap for all 90 instances with $\ncons=1$, an average gap of 12.97\% [11.78\%] for 90 instances with $\ncons=2$, and an average gap of 19.63\% [29.01\%] for the 90 instances with $\ncons=3$. \DDMaxMin{}, in turn, also had a zero optimality gap for $\ncons=1$, and presented generally stronger average optimality gaps of 11.86\% [11.68\%] and 14.36\% [14.63\%] for instances with $\ncons=2$ and $\ncons=3$, respectively. 

Figure \ref{fig:reduction} provides further intuition on the network sizes and impact of reduction for \DDMaxMin{} for each value of $\ncons$, summarizing the distribution of the number of nodes for the state-based and the reduced network. For $\ncons=1$, reduction had a small marginal effect on network sizes, as terminal node values were mostly distinct for each possible state (a scalar when $\ncons=1$). We also observed that all networks were exact in this case, i.e., no state aggregation occured for all 90 instances with $\ncons=1$. In constrast, all instances for $\ncons=10$ and $\ncons=20$ reached their budget $\budget$ and presented similar final network sizes, as also reflected in the box-plots. Nonetheless, reduction was more effective in these circumstances; on average, the network size reduced by 3.42 times [6.95] for $\ncons=2$, and $2.35$ times [4.88] for $\ncons = 3$. As expected, the variance parameter $\alpha$ also impacted reduction effectiveness: for $\alpha = 1$, $\alpha = 3$, and $\alpha=5$, the network size reduced by 3.72 times [7.81], 1.84 times [3.23], and 1.31 times [0.21], as the number of distinct follower's subproblem values is expected to increase with larger $\alpha$.

\smallskip
\noindent \textit{Comparison with exact approaches}. Next, we evaluate whether an approximate value network potentially captures distinct structure in comparison to \texttt{MibS} and \BC{}, which implement different state-of-the-art relaxation mechanisms. Our objecive is to assess whether the approximate network could be complementary to such relaxations. To this end, we compare total runtime performance with respect to an exact cutting-plane variant building on \DDMaxMin{}. More precisely, let $(\xvec^*, \yvec^*, \z^*)$ be the optimal solution of \DDMaxMin{}, i.e., formulation \eqref{model:hpr_net}. If $\z^* > \val(\xvec^*)$, the solution $(\xvec^*, \yvec^*, \z^*)$ is not feasible to \eqref{model:miblp} because $\yvec^*$ violates \eqref{miblp:cons:2}; that is, the terminal node value associated with $\xvec^*$ in $\net^+_A$ is overly restricted. However, we can improve the relaxation by adding an optimality cut of the form 
$$\x = \x^* \Rightarrow \z = \val(\x^*),$$
to \eqref{model:hpr_net}, which specifies the correct value of $\z$ when setting the leader's decision to $\x^*$. We can then resolve the model and repeat the procedure until a solution such that $\z^* = \val(\x^*)$ is found, which proves optimality. To linearize the above cut, we implement the blocking cuts from \cite{Lozano2017} (see Appendix \ref{app:numericaldetails} for the description of the cuts). We have also implemented \texttt{HPR} with the same cutting-plane procedure; because runtimes were several orders of magnitude larger than the other methods, we ommitted its analysis. 

Let \DDMaxMinSep{} denote this exact separation method. Figure \ref{fig:runtimes} compare the runtimes for \DDMaxMinSep{}, \texttt{MibS}, and \BC{} considering a time limit of 3,600 seconds (one hour), depicted in dashed lines in the scatter plots. The runtime for \DDMaxMinSep{} account for network construction, reduction, terminal value strengthening procedure, and the separation. In particular, network construction and reduction accounted, on average, for 24.25\% [23.89\%] of the total time, while the terminal value strengthening procedure accounted for 28.76\% [24.18\%] of the total time. The remaining portion of the time was largely spent solving the underlying mixed-integer linear programs.

Across the 270 instances, \texttt{MibS}, \BC{}, and \DDMaxMinSep{} solved to optimality 119, 190, and 239 instances, respectively. For the instances solved both by \texttt{MibS} and \DDMaxMinSep{} in Figure \ref{fig:runtime_mibs} (in total 119 cases), \texttt{MibS} outperformed \DDMaxMinSep{} in six instances of relatively small size ($\nld=25$, $\ncons=1$), due to the bottleneck in constructing the network. Instances were also much more difficult for larger $\ncons$. For \MibS{}, the average runtimes for $\ncons=1$ (77 instances), $\ncons=10$ (22 instances), and $\ncons=20$ (20 instances) were 130.93 seconds [298.50s], 1,048.50 seconds [970.55s], and 1,264.75 seconds [753.9s], respectively. For \DDMaxMinSep{}, the average runtimes were 0.88 seconds [0.35s], 14.05 seconds [13.95s], and 13.04 seconds [12.00s], respectively. We also note that \DDMaxMinSep{} was exact for $\ncons=1$ because of the small number of states that could be captured within the available budget $\budget = 50$, while the large number of interaction constraints when $\ncons=20$ was overall more challenging to all methodologies.

\begin{figure}[t!]
	\begin{subfigure}[b]{0.5\textwidth}
		\includegraphics[scale=0.45]{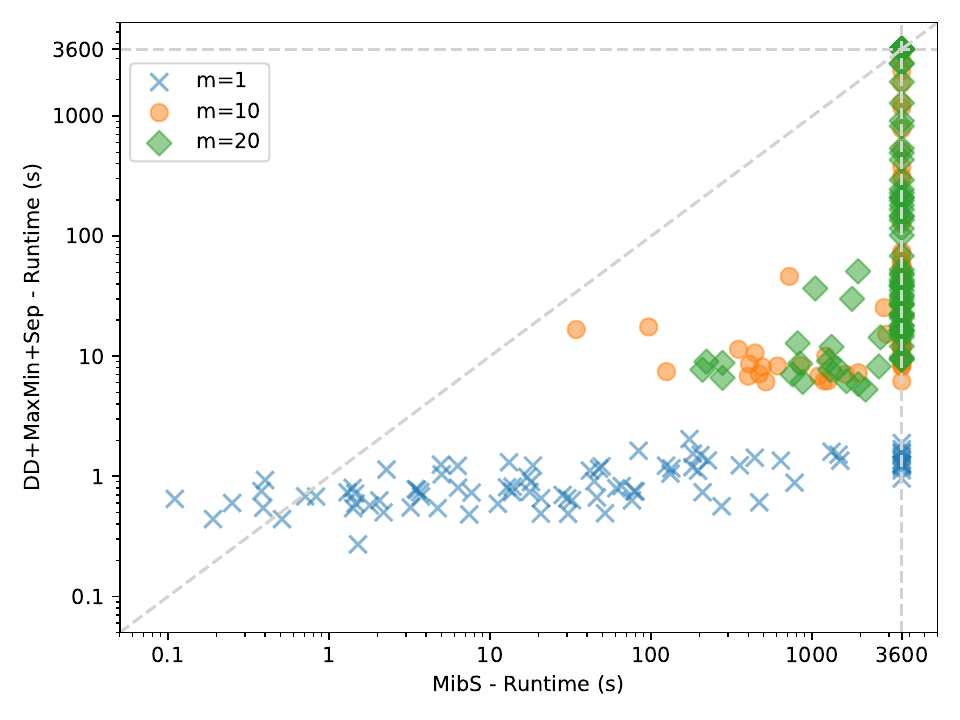} 
		\caption{Comparison with \MibS{}.}
		\label{fig:runtime_mibs}
    \end{subfigure}
	\begin{subfigure}[b]{0.5\textwidth}
		\includegraphics[scale=0.45]{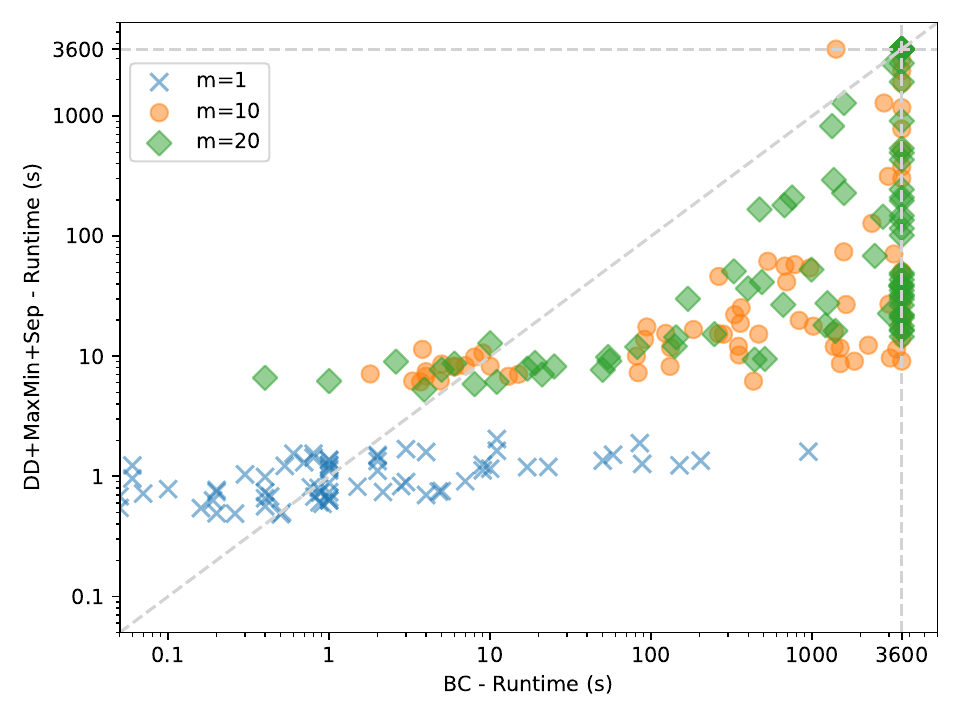} 
		\caption{Comparison with \BC{}.}
		\label{fig:runtime_bc}
	\end{subfigure}	
    \caption{(Colored.) Runtime comparions between \texttt{MibS}, \BC{}, and the exact \DDMaxMinSep{}. The $y$-axis is in logarithmic scale in both plots.}
    \label{fig:runtimes}
\end{figure}

In turn, for the instances solved both by \BC{} and \DDMaxMinSep{} in Figure \ref{fig:runtime_bc} (in total 189 cases), the average runtime for \BC{} for $\ncons=1$ (90 instances), $\ncons=10$ (56 instances), and $\ncons=20$ (43 instances) were 19.35 seconds [103.37s], 779.93 seconds [1,003.50s], and 731.64 seconds [975.74s], respectively. For \DDMaxMinSep{}, the average runtimes were 0.95 seconds [0.37s], 48.91 seconds [173.84s], and 155.27 seconds [462.12s], respectively. Although average times were significantly lower for \DDMaxMinSep{}, \BC{} outperformed \DDMaxMinSep{} in 70 instances. These were also generally smaller cases -- 52 instances with $\nld = 25$, and 50 instances with $\ncons=1$ -- that were solved by both methods in less than 40 seconds, but where the mixed-integer programs of \DDMaxMinSep{} presented a bottleneck in runtime. 

To provide further intuition on the parameters that result in a stronger \DDMaxMinSep{} performance, Figure \ref{fig:range_rhs_dd} depicts box-and-whisker runtime plots for different values of the variance parameter $\alpha$ and constraint tightness $\beta$. Only the 238 instances solved within the time limit of 3,600 seconds are considered. Specifically for the $\alpha$ comparison in Figure \ref{fig:range_dd}, the average runtimes for $\alpha=3$ and $\alpha=5$ increase by 6.32 times and 6.23 times, respectively, over the times for $\alpha=1$. For larger $\alpha$, the relaxation is weaker as the number of states increases and more nodes are merged; for these instances, the average integer optimality gap of \DDMaxMinSep{} for $\alpha=1$ (90 instances), $\alpha=3$ (79 instances), and $\alpha=5$ (70 instances) are 3.03\% [5.02\%], 8.08\% [10.67\%], and 10.53\% [15.02\%], respectively.

\begin{figure}[t!]
	\begin{subfigure}[b]{0.5\textwidth}
		\includegraphics[scale=0.45]{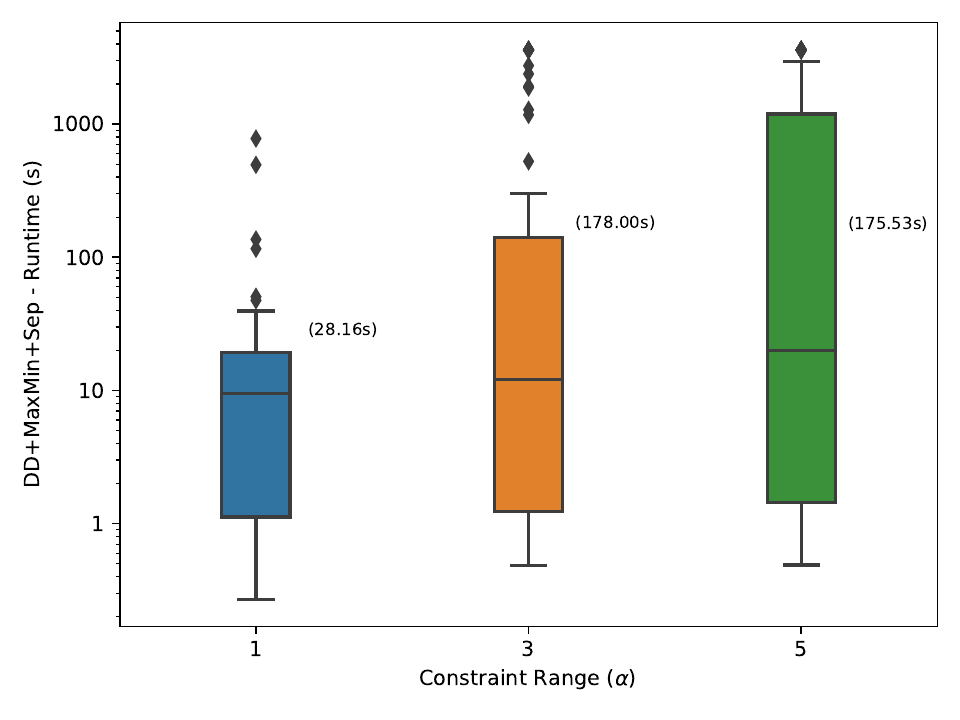} 
		\caption{\DDMaxMinSep{} runtimes for $\alpha=1,3,5$.}
		\label{fig:range_dd}
    \end{subfigure}
	\begin{subfigure}[b]{0.5\textwidth}
		\includegraphics[scale=0.45]{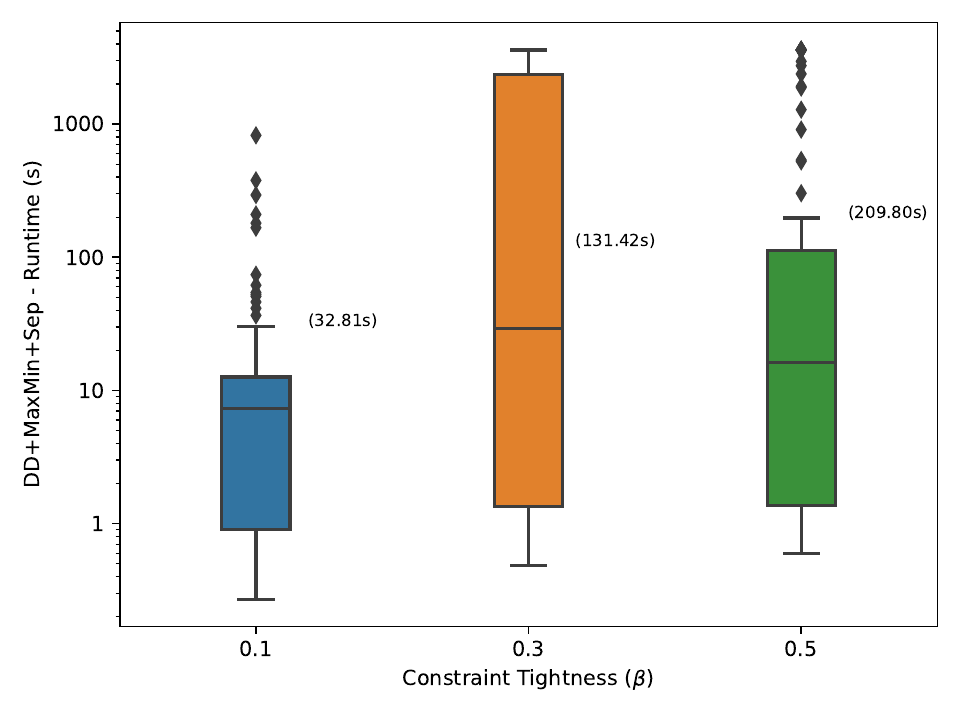} 
		\caption{\DDMaxMinSep{} runtimes for $\beta=.1,.3,.5$.}
		\label{fig:rhs_dd}
	\end{subfigure}	
    \caption{(Colored.) Runtime box-and-whisker plots for the exact variant of \DDMaxMinSep{}. The $y$-axis is in logarithmic scale in both plots. Times refer to instances solved up to the time limit of 3,600 seconds (238 cases).}
    \label{fig:range_rhs_dd}
\end{figure}

Analogously, Figure \ref{fig:rhs_dd} compares the distribution of runtimes for varying constraint tightness $\beta$. Runtimes were generally smaller for the extreme cases $\beta=0.1$ (90 instances) and $\beta=0.5$ (81 instances) when evaluating against $\beta=0.3$ (69 instances), as the number of feasible solutions for both the leader and follower are more restricted and relaxed if $\beta$ is smaller or larger, respectively; the results suggest that $\beta=0.3$ represents a phase transition when the problem becomes more difficult.

\subsection{Benchmark Instances and Integrating Relaxations}
\label{sec:num:benchmark}

We next assess performance when providing the stronger relaxations as input models to existing MIBLP solvers, verifying runtime gains from the complementary relaxations. For this analysis, we experiment on a well-known set of discrete bilevel problem instances proposed by \citet{fischetti2016intersection} and based on the MIPLIB 3.0 benchmark. We consider the subset of 33 instances (out of 57) having less than or equal to 3,000 variables and 3,000 constraints, as otherwise the value networks were excessively large to provide measurable runtime benefits. The experimental design and network parameters are the same as in \S\ref{sec:num:structured}, with the exception of the budget $\budget$: $\budget$ is set to $50$ if $\nld \in [0, 150]$, set to $25$ if $\nld \in (150, 300]$, set to $16$ if $\nld \in (300,500]$, set to $8$ if $\nld \in (500,1000]$, and set to $4$ if $\nld > 1000$. 

The instances and their sizes are described in Table \ref{tab:DiscreteLB}. For reference, the table also reports the integer optimality gaps obtained when solving \texttt{HPR} and \DDMaxMin{}, i.e., 
formulation \eqref{model:miblp_hpr} and formulation \eqref{model:hpr_net} with $\net^+_A$, respectively. The columns $\alpha$, $\tau$, and BKS denote number of different coefficient values of the interaction constraints, the percentage of non-zeros elements of $\Al$ (rounded down), and the best-known solution obtained by any method in 3,600 seconds, respectively (see runtime performance below). The other columns are defined as above. In particular, we observe that \DDMaxMin{} proved optimality for 18 instances out of the 33 cases, and improved the gap for three instances (\texttt{p0201-0.5}, \texttt{p0201-0.9} and \texttt{p0548-0.9}). Similar to the insights from \S\ref{sec:num:structured}, instances that were solved by \DDMaxMin{} or that resulted in better gaps generally had smaller $\nld$ (e.g., \texttt{stein27-0.9}), or smaller $\alpha$ and $\tau$ (e.g., \texttt{p0282-0.1}). 

\begin{table}[!t]
	\centering
	\scriptsize
	\caption{Integer Optimality Gaps between \texttt{HPR} and \DDMaxMin.}
	\label{tab:DiscreteLB}
	\begin{tabular}{rrrrrrrrrrrrr}
	\hline
	\multicolumn{ 1}{c}{Instance} & \multicolumn{ 1}{c}{$\nld$} & \multicolumn{ 1}{c}{$\nfl$} & \multicolumn{ 1}{c}{$m$} & \multicolumn{ 1}{c}{$\alpha$} & \multicolumn{ 1}{c}{$\tau$} & \multicolumn{ 1}{c}{BKS} &             \multicolumn{ 3}{c}{\texttt{HPR}} &              \multicolumn{ 3}{c}{\DDMaxMin} \\
	
	\multicolumn{ 1}{c}{} & \multicolumn{ 1}{c}{} & \multicolumn{ 1}{c}{} & \multicolumn{ 1}{c}{} & \multicolumn{ 1}{c}{} & \multicolumn{ 1}{c}{} & \multicolumn{ 1}{c}{} &   Time (s) &         LB &        Gap &   Time (s) &         LB &        Gap \\
	\hline

enigma-0.1 &         90 &         10 &         42 &        144 &     17.3\% &          0 &          0 &          0 &        0\% &          8 &          0 &        0\% \\

enigma-0.5 &         50 &         50 &         42 &         72 &     17.3\% &          0 &          0 &          0 &        0\% &          9 &          0 &        0\% \\

enigma-0.9 &         10 &         90 &         42 &         18 &     17.3\% &          0 &          0 &          0 &        0\% &         16 &          0 &        0\% \\
\hline
  lseu-0.1 &         81 &          8 &         28 &         30 &     28.5\% &       1120 &          0 &       1120 &        0\% &          8 &       1120 &        0\% \\

  lseu-0.5 &         45 &         44 &         28 &         17 &     23.6\% &       2263 &          0 &       1120 &       51\% &        891 &       1120 &       51\% \\

  lseu-0.9 &          9 &         80 &         28 &          4 &     39.7\% &       5838 &          0 &       1120 &       81\% &         31 &       5838 &        0\% \\
\hline
 p0033-0.1 &         30 &          3 &         16 &         15 &     26.7\% &       3089 &          0 &       3089 &        0\% &          6 &       3089 &        0\% \\

 p0033-0.5 &         17 &         16 &         16 &          4 &     23.0\% &       3095 &          0 &       3089 &        0\% &          8 &       3095 &        0\% \\

 p0033-0.9 &          4 &         29 &         16 &          4 &     50.0\% &       4679 &          0 &       3089 &       34\% &          2 &       4679 &        0\% \\
\hline
 p0201-0.1 &        181 &         20 &        133 &          4 &     17.8\% &      12225 &          0 &       7615 &       38\% &          8 &      12225 &        0\% \\

 p0201-0.5 &        101 &        100 &        133 &          4 &     12.8\% &      13635 &          0 &       7615 &       44\% &        529 &      11325 &       17\% \\

 p0201-0.9 &         21 &        180 &        133 &          4 &     16.3\% &      15025 &          0 &       7615 &       49\% &         24 &      14910 &        1\% \\
\hline
 p0282-0.1 &        254 &         28 &        241 &         21 &      4.0\% &     260781 &          0 &     258411 &        1\% &         14 &     258411 &        1\% \\

 p0282-0.5 &        141 &        141 &        241 &         27 &      4.3\% &     272659 &          0 &     258411 &        5\% &        165 &     258411 &        5\% \\

 p0282-0.9 &         29 &        253 &        241 &         16 &     44.5\% &     614837 &          0 &     258411 &       58\% &         20 &     614687 &        0\% \\
\hline
 p0548-0.1 &        494 &         54 &        176 &        108 &      3.9\% &      11067 &          0 &       8691 &       21\% &        112 &       8691 &       21\% \\

 p0548-0.5 &        274 &        274 &        176 &         77 &      1.5\% &      21750 &          0 &       8691 &       60\% &        112 &       8691 &       60\% \\

 p0548-0.9 &         55 &        493 &        176 &         35 &      2.7\% &      48852 &          0 &       8691 &       82\% &        516 &       9366 &       81\% \\
\hline
 p2756-0.1 &       2481 &        275 &        755 &         94 &      0.8\% &      11973 &          0 &       3124 &       74\% &        132 &       3124 &       74\% \\

 p2756-0.5 &       1378 &       1378 &        755 &         99 &      0.4\% &      23060 &          0 &       3124 &       86\% &        279 &       3124 &       86\% \\

 p2756-0.9 &        276 &       2480 &        755 &         63 &      0.6\% &      34471 &          0 &       3124 &       91\% &        818 &       3124 &       91\% \\
\hline
stein27-0.1 &         25 &          2 &        118 &          1 &     11.4\% &         18 &          0 &         18 &        0\% &          3 &         18 &        0\% \\

stein27-0.5 &         14 &         13 &        118 &          1 &     12.2\% &         19 &          0 &         18 &        5\% &          4 &         19 &        0\% \\

stein27-0.9 &          3 &         24 &        118 &          1 &     37.8\% &         24 &          0 &         18 &       25\% &          1 &         24 &        0\% \\

stein45-0.1 &         41 &          4 &        331 &          1 &      5.8\% &         30 &          3 &         30 &        0\% &         23 &         30 &        0\% \\

stein45-0.5 &         23 &         22 &        331 &          1 &      7.1\% &         32 &          3 &         30 &        6\% &         23 &         32 &        0\% \\

stein45-0.9 &          5 &         40 &        331 &          1 &     22.8\% &         40 &          3 &         30 &       25\% &          5 &         40 &        0\% \\
\hline
l152lav-0.1 &       1791 &        198 &        193 &         29 &      5.1\% &       4722 &          1 &       4722 &        0\% &        200 &       4722 &        0\% \\

l152lav-0.5 &        995 &        994 &        193 &         29 &      6.4\% &       4904 &          1 &       4722 &        4\% &       1140 &       4722 &        4\% \\

l152lav-0.9 &        199 &       1790 &        193 &         16 &      8.3\% &       5069 &          1 &       4722 &        7\% &       1205 &       4722 &        7\% \\
\hline
mod010-0.1 &       2390 &        265 &        291 &          2 &      3.1\% &       6554 &          0 &       6548 &        0\% &        348 &       6548 &        0\% \\

mod010-0.5 &       1328 &       1327 &        291 &          2 &      3.4\% &       6688 &          0 &       6548 &        2\% &       3248 &       6548 &        2\% \\

mod010-0.9 &        266 &       2389 &        291 &          2 &      4.8\% &       7441 &          0 &       6548 &       12\% &        623 &       6548 &       12\% \\

	\hline
			   &            &            &            &            &       &  {\bf Avg}         & {\bf 1}           &   & {\bf 26\%} &     {\bf 319}        &            & {\bf 16\%} \\
	\hline
	\end{tabular}  
\end{table}

We next report the performance of \BC{}, \MibS{}, and \DDMaxMinSep{} in this benchmark for a time limit of 3,600 seconds (one hour). To test the complementary of relaxations, we solve \eqref{model:miblp} augmented with the redundant flow constraints $\polynet(\net^{+}_A)$, providing the strengthened formulation as inputs to \MibS{} and \BC{}; we refer to these methods by \DDMibS{} and \DDBC{}, respectively. The results are presented in Table \ref{tab:MIPLIB}. In particular, the ``Time'' column refers to the runtime of convergence of the method, and ``\#Iter'' refers to the number of cutting-plane iterations until convergence of \DDMaxMinSep{}. The Gap column, in turn, presents the optimality gaps with respect to the best-known solutions reported in Table \ref{tab:DiscreteLB}, so as to provide an equal assessment of the strength of the final relaxation across methods. These best-known upper bounds are not input to solvers in advance. The runtimes for \DDMaxMinSep{}, \DDMibS{}, and \DDBC{} also account for network construction, reduction, and terminal value strengthening.

We highlight the following insights from Table \ref{tab:MIPLIB}.
\begin{itemize}

\item 
\texttt{enigma}, \texttt{stein}, and \texttt{p0033} instances, which are solved within a few seconds by both \MibS{} and \BC{}, are also solved by \DDMaxMinSep{} at its initial relaxation (i.e., no cuts were generated),
although at a higher runtime due to the network construction bottleneck and larger formulation sizes. The formulation size also impacted the higher runtimes for the integrations \DDMibS{} and \DDBC{}.

\item 
\BC{} generally presented the best performance for the three \texttt{lseu} instances, especially for \texttt{lseu-0.5} which was not solved by the other approaches. We also observed that \DDMaxMinSep{} and \DDBC{} presented relatively similar performance in \texttt{lseu-0.1} and slightly improved the runtime for \texttt{lseu-0.9}, which had a small number of leader's variables ($\nld=9$).
\end{itemize}
\begin{landscape}
	\begin{table}
	\centering
	\footnotesize
	\caption{Performance of the four exact algorithms on the MIPLIB instances by \cite{fischetti2016intersection}. Dash in the column ``Time(s)'' denotes 3,600 seconds (time limit).}
	\label{tab:MIPLIB}
	\begin{tabular}{|rrrrr|rr|rr|rrr|rr|rr|}
	\hline
	\multicolumn{ 1}{|c}{Instance} & \multicolumn{ 1}{c}{$\nld$} & \multicolumn{ 1}{c}{$\nfl$} & \multicolumn{ 1}{c}{$m$} & \multicolumn{ 1}{c}{\texttt{BKS}} & \multicolumn{ 2}{c}{\texttt{MibS}} & \multicolumn{ 2}{c}{\texttt{B\&C}} &             \multicolumn{ 3}{c}{\texttt{DD+MaxMin+Sep}} & \multicolumn{ 2}{c}{\texttt{MibS+DD}} & \multicolumn{ 2}{c|}{\texttt{B\&C+DD}} \\
	
	\multicolumn{ 1}{|c}{} & \multicolumn{ 1}{c}{} & \multicolumn{ 1}{c}{} & \multicolumn{ 1}{c}{} & \multicolumn{ 1}{c|}{} &   Time (s) &        Gap &   Time (s) &        Gap &   Time (s) &        Gap &       \# Iter &   Time (s) &        Gap &   Time (s) &        Gap \\
	\hline
enigma-0.1 &         90 &         10 &         42 &          0 &          1 &        0\% &          0 &        0\% &          6 &        0\% &          0 &         10 &        0\% &          6 &        0\% \\

enigma-0.5 &         50 &         50 &         42 &          0 &          1 &        0\% &          2 &        0\% &          5 &        0\% &          0 &          9 &        0\% &          7 &        0\% \\

enigma-0.9 &         10 &         90 &         42 &          0 &          1 &        0\% &          3 &        0\% &         10 &        0\% &          0 &         10 &        0\% &         13 &        0\% \\
\hline
  lseu-0.1 &         81 &          8 &         28 &       1120 &          2 &        0\% &          0 &        0\% &          7 &        0\% &          1 &          2 &        0\% &          0 &        0\% \\

  lseu-0.5 &         45 &         44 &         28 &       2263 &          - &       10\% &       2467 &        0\% &          - &       43\% &         30 &          - &       47\% &          - &        6\% \\

  lseu-0.9 &          9 &         80 &         28 &       5838 &          - &       18\% &         24 &        0\% &         17 &        0\% &          0 &         17 &        0\% &         19 &        0\% \\
\hline
 p0033-0.1 &         30 &          3 &         16 &       3089 &          0 &        0\% &          0 &        0\% &          4 &        0\% &          0 &          4 &        0\% &          4 &        0\% \\

 p0033-0.5 &         17 &         16 &         16 &       3095 &          0 &        0\% &          0 &        0\% &          6 &        0\% &          0 &         22 &        0\% &          7 &        0\% \\

 p0033-0.9 &          4 &         29 &         16 &       4679 &          0 &        0\% &          0 &        0\% &          2 &        0\% &          0 &          1 &        0\% &          1 &        0\% \\
\hline
 p0201-0.1 &        181 &         20 &        133 &      12225 &          - &       33\% &          - &       21\% &          7 &        0\% &          0 &          6 &        0\% &          6 &        0\% \\

 p0201-0.5 &        101 &        100 &        133 &      13635 &          - &       39\% &        539 &        0\% &        503 &        0\% &          0 &          - &       17\% &          - &       14\% \\

 p0201-0.9 &         21 &        180 &        133 &      15025 &          - &       43\% &          2 &        0\% &         19 &        0\% &          0 &          - &        1\% &         48 &        0\% \\
\hline
 p0282-0.1 &        254 &         28 &        241 &     260781 &          - &        1\% &        676 &        0\% &         43 &        0\% &          0 &          - &        1\% &         18 &        0\% \\

 p0282-0.5 &        141 &        141 &        241 &     272659 &          - &        5\% &       2938 &        0\% &        171 &        0\% &          0 &          - &        5\% &        233 &        0\% \\

 p0282-0.9 &         29 &        253 &        241 &     614837 &          - &       57\% &          - &       18\% &         15 &        0\% &          0 &          - &     0.03\% &         17 &        0\% \\
\hline
 p0548-0.1 &        494 &         54 &        176 &      11067 &          - &       33\% &          - &       17\% &          - &       19\% &         59 &          - &       36\% &          - &       17\% \\

 p0548-0.5 &        274 &        274 &        176 &      21750 &          - &       66\% &          - &       50\% &          - &       59\% &        135 &          - &       83\% &          - &       49\% \\

 p0548-0.9 &         55 &        493 &        176 &      48852 &          - &       85\% &          - &       62\% &          - &       78\% &         93 &          - &       81\% &          - &       66\% \\
\hline
 p2756-0.1 &       2481 &        275 &        755 &      11973 &          - &       77\% &          - &       72\% &          - &       74\% &        267 &          - &       77\% &          - &       71\% \\

 p2756-0.5 &       1378 &       1378 &        755 &      23060 &          - &       88\% &          - &       83\% &          - &       86\% &        222 &          - &       88\% &          - &       83\% \\

 p2756-0.9 &        276 &       2480 &        755 &      34471 &          - &       92\% &          - &       87\% &          - &       91\% &         42 &          - &       92\% &          - &       86\% \\
\hline
stein27-0.1 &         25 &          2 &        118 &         18 &          0 &        0\% &          0 &        0\% &          3 &        0\% &          0 &         19 &        0\% &          3 &        0\% \\

stein27-0.5 &         14 &         13 &        118 &         19 &          0 &        0\% &          0 &        0\% &          3 &        0\% &          0 &          3 &        0\% &          3 &        0\% \\

stein27-0.9 &          3 &         24 &        118 &         24 &          0 &        0\% &          0 &        0\% &          1 &        0\% &          0 &          1 &        0\% &          1 &        0\% \\

stein45-0.1 &         41 &          4 &        331 &         30 &         11 &        0\% &          1 &        0\% &         23 &        0\% &          0 &        550 &        0\% &         30 &        0\% \\

stein45-0.5 &         23 &         22 &        331 &         32 &          0 &        0\% &          0 &        0\% &          5 &        0\% &          0 &          5 &        0\% &          5 &        0\% \\

stein45-0.9 &          5 &         40 &        331 &         40 &          2 &        0\% &          0 &        0\% &          3 &        0\% &          0 &          3 &        0\% &          3 &        0\% \\
\hline
l152lav-0.1 &       1791 &        198 &        193 &       4722 &          3 &        0\% &          1 &        0\% &        112 &        0\% &          0 &        760 &        0\% &        140 &        0\% \\

l152lav-0.5 &        995 &        994 &        193 &       4904 &          - &        3\% &          - &        3\% &          - &        3\% &         28 &          - &        4\% &          - &        3\% \\

l152lav-0.9 &        199 &       1790 &        193 &       5069 &          - &        6\% &          - &        5\% &          - &        6\% &         41 &          - &        6\% &          - &        6\% \\
\hline
mod010-0.1 &       2390 &        265 &        291 &       6554 &          1 &        0\% &          2 &        0\% &       1069 &        0\% &         23 &          - &        0.1\% &        246 &        0\% \\

mod010-0.5 &       1328 &       1327 &        291 &       6688 &          - &        2\% &          - &        2\% &          - &        2\% &          8 &          - &        2\% &          - &        2\% \\

mod010-0.9 &        266 &       2389 &        291 &       7441 &          - &       11\% &          - &       12\% &          - &       12\% &         66 &          - &       12\% &          - &       12\% \\

	\hline
			   &            &            &     {\bf } & {\bf Avg. (\# solved)} & {\bf 1964} & {\bf 20.3\% (15)} & {\bf 1511} & {\bf 13.1\% (21)} & {\bf 1262} & {\bf 14.4\% (22)} &   {\bf 31} & {\bf 1898} & {\bf 16.8\% (17)} & {\bf 1334} & {\bf 12.6\% (21)} \\
	\hline
	\end{tabular}  
	
	\end{table}
\end{landscape}
\begin{itemize}
\item 
The network model presented measurable improvements for the difficult \texttt{p}-instance classes. In total, \DDMaxMinSep{},  \DDBC{}, and  \BC{} solved 6, 5, and 1 instance out 12, respectively, while \MibS{} presented a positive optimality gap for the 12 cases. We observe that the integration \DDMibS{} significantly reduced the final optimality gaps for \MibS{} on \texttt{p0201} and \texttt{p0282} instances. The integration \DDBC{}, in turn, presented at least one order of magnitude lower runtimes than \BC{} in \texttt{p0282}, and slightly better optimality gaps for all \texttt{p2756} instances.

\item 
All methods performed similarly on the large-scale \texttt{ll52-lab} and \texttt{mod010} instance classes, with no clear benefits or trade-offs observed in all approaches.
\end{itemize}

On average for these instance classes, \DDBC{} improved runtime and final gap by 11.7\% (1,334s vs 1,511s) and 3\% (12.6\% vs. 13.1\%) with respect to \BC{}, respectively. Similarly, \DDMibS{} improved the runtime of \MibS{} slightly by 3\% (1,898s vs. 1,964s) but its gap by 17\% (16.8\% vs. 20.3\%). Finally, although \DDMaxMinSep{} implements a straightforward separation procedure, the algorithm presented a surprisingly similar performance to \DDBC{} and \DDMibS{}, with an average runtime of 1,262 seconds, 14\% optimality gap, and 22 instances solved.



\section{Conclusion}
\label{sec:conclusion}

In this paper, we propose a network-based representation of the value function 
$\val$ for bilevel problems with linear interaction constraints and discrete leader's decisions. Our approach operates on a projection of the leader's decision into \textit{states} that map to distinct follower's subproblem evaluations, each captured by a node of the network. Evaluating the value function reduces to solving a flow linear program over the proposed network, which we show to define the convex hull of the graph of $\val$. For cases where the network is large, we provide a state-aggregation procedure that can be parameterized to provide networks of any desired maximum size. Our numerical experiments compare and incorporate the network encodings with state-of-the-art MIBLP solvers. Specifically, our improved high-point relaxations showed that strong performance correlated with the sparsity and range of coefficients in the interaction matrix $\Al$. By combining our relaxations with existing solvers, we also observed runtime and gap reductions for the branch-and-cut-based methods, solving additional instances within a one-hour limit.

The work also opens new avenues for research in both theory and methodology for discrete bilevel problems. For instance, it would be meaningful to investigate cuts that can be extracted from the networks, similar to \cite{davarnia2020outer}, for cases where the network is too large. Other areas include leveraging specific follower's subproblem structure to generate specialized approximate value networks with either stronger bounds or generally more compact.


\newpage


\bibliographystyle{ormsv080} 
\bibliography{references.bib} 



\clearpage
\begin{APPENDICES}

\section{Proofs} 
\label{app:proofs}
\begin{proof}{Proof of Proposition \ref{prop:convexhull}.}
Let $W := \{ \flowvec \in \mathbb{R}^{|\edgeset|} \colon \eqref{extform:cons:1}, \eqref{extform:cons:2}, \eqref{extform:cons:6}\}.$ The linear constraint set in $W$ corresponds to a network-flow model transferring one unit of flow from the initial node $\initnode$ to some terminal node $\termnode \in \nodeset_{\nld + 1}$; i.e., it is a path polytope. Thus, $W$ is bounded and integral \citep{AhujaEtal93}, and all extreme points are binaries. It follows that there is a one-to-one mapping between an extreme point $\flowvec^* \in W$ and a path $((\edge^*_1, \dots, \edge^*_{\nld}) =  (\node^*_1, \node^*_2, \x^*_1), (\node^*_2, \node^*_3, \x^*_2), \dots, (\node^*_{\nld}, \node^*_{\nld+1}, \x^*_{\nld})),
$
$\node^*_{j} \in \nodeset_{j}$ for all $j \in \augindexset$. Each path, in turn, has a one-to-one mapping with points of $\extval$ by construction. Moreover,
\begin{align*}
	\val(\xvec^*) = \edgeval_{\node^*_{\nld + 1}} = \sum_{\node \in \nodeset_{\nld+1}} \sum_{\edge \in \edgeset^{-}(\node)} \edgeval_{\node} \, \flow^*_{\edge},
\end{align*} 
where the first equality follows from Definition \ref{def:valuedd}-(C) and the second equality from the fact that edge $\edge^*_{j}$ belongs to the associated path if and only if $\flow_{\edge^*_{j}} = 1$. This implies that, for any extreme point $\flowvec^*$ in $W$, we can extend it to a valid point in $\polynet(\net)$ by setting $\z = \val(\xvec^*)$ and $\x_{j} = \x^*_{j}$ to satisfy constraints \eqref{extform:cons:3}-\eqref{extform:cons:5}. 

Finally, for appropriate matrices $\mathbf{D'}$ and $\mathbf{D'}$. we can rewrite the extension as
$$
	\polynet(\net) = \{ (\xvec, \z) \colon \exists \flowvec \in W \, \textnormal{s.t.} \, \xvec = \mathbf{D} \flowvec, \, \z = \mathbf{D'} \flowvec \}.
$$
It follows that any extreme point of $W$ has a one-to-one mapping to an extreme point of $\polynet(\net)$. More precisely, consider any convex combination $\flowvec' = \gamma \cdot \flowvec_1 + (1-\gamma) \cdot \flowvec_2$ for distinct $\flowvec_1,\flowvec_2 \in W$ and $\gamma \in (0,1)$. We must have $(\mathbf{D}\flowvec', \flowvec', \mathbf{D'}\flowvec') = \gamma \cdot (\mathbf{D}\flowvec_1, \flowvec_1, \mathbf{D'}\flowvec_1) + (1-\gamma) \cdot (\mathbf{D}\flowvec_2, \flowvec_2, \mathbf{D'}\flowvec_2)$. Thus, $(\xvec^*, \z)$ is an extreme point of $\polynet(\net)$ if and only if $\flowvec^*$ is an extreme point of $W$, since $\z = \val(\xvec^*)$. \hfill $\blacksquare$
\end{proof}
\smallskip
\begin{proof}{Proof of Lemma \ref{lem:statevalue}.}
	The conditions established by Definition \ref{def:valuedd}-(A) and (B) follow by construction. For the conditions in Definition \ref{def:valuedd}-(C), let $\xvec \in \{0,1\}^{\nld}$ and define the sequence $\state_1, \state_2, \dots, \state_{\nld + 1}$ such that $\state_1 = \zeros$ and $\state_{j+1} = \state_{j} + \al_{j} \x_{j}$ for $j = 1,\dots,\nld$. If $(\xvec, \val(\xvec)) \in \extval$, then we must have $\state_{\nld+1} \in \stateset_{\nld+1}$ and, by construction, $\state_{j} \in \stateset_{j}$ for $j \in \indexset$. That is, the sequence $\state_1, \dots, \state_{\nld+1}$ defines a path $(\state_{1}, \state_{2}, \x_{1}), (\state_{2}, \state_{3}, \x_{2}), \dots, (\state_{\nld}, \state_{\nld+1}, \x_{\nld})$ in $\net^S$ from the initial node $\initnode$ to some terminal node in $\nodeset^S_{\nld+1}$. From condition (c) of the lemma,
	$\edgeval^S_{\state_{\nld+1}} = \vale(\state_{\nld+1})$; that is, any pair $(\xvec, \val(\xvec)) \in \extval$ maps to a path in $\net^{S}$. 
	
	Conversely, consider any path $(\state_{1}, \state_{2}, \x_{1}), (\state_{2}, \state_{3}, \x_{2}), \dots, (\state_{\nld}, \state_{\nld+1}, \x_{\nld})$ in $\net^S$ from the initial node $\initnode$ to some terminal node in $\nodeset^S_{\nld+1}$. Then, by construction, $\state_{j} \in \nodeset^S_{j}$ for $j \in \augindexset$ and therefore $\state_{\nld+1} \in \stateset_{\nld+1}$. That is, $\state_{\nld+1} = \Al \xvec$ for the vector $\xvec = (\x_1, \dots, \x_{\nld})$. Since $\edgeval^S_{\state_{\nld+1}} = \vale(\state_{\nld+1}) =
	\vale(\Al \xvec) = \val(\xvec)$, we have that $(\xvec, \val(\xvec)) \in \extval$ holds, completing the proof.
	\hfill $\blacksquare$
\end{proof}
\smallskip
\begin{proof}{Proof of Proposition \ref{prop:minimality}.}
	The symmetry result follows by backward induction on the layer $j$. The basis case $j=\nld+1$ holds because nodes $u,v \in \nodeset_{\nld+1}$ are symmetric if and only if $\edgeval_{u} = \edgeval_{v}$, captured by (R1). For some $j < \nld+1$, suppose no two nodes in layers $\nodeset_{j+1}, \dots, \nodeset_{\nld+1}$ are symmetric after applying Algorithm \ref{algo:reduction}, but nodes $u,v \in \nodeset_{j}$ at layer $j$ are symmetric and violate condition (R2).
	
	There exists some $\x \in \{0,1\}$ such that $(u,u',\x)$, $(v,v',\x) \in \edgeset$ for nodes $u',v' \in \nodeset_{j+1}$, $u' \neq v'$. However, by Definition \ref{def:symmetryclass}, $u'$ and $v'$ are equivalent, otherwise there must exist some path starting at $u$ to a terminal node that either has distinct labels or terminal values than all paths starting at $v$ and also ending at a terminal node. This contradicts the induction hypothesis.
	
	Finally, uniqueness (up to isomorphism) follows since the resulting graph can be perceived as a multi-terminal binary decision diagram where no two nodes are equivalent, which is minimal and has no redundancies (Thm. 3.1.4, \citealt{wegener2000branching}).
	\hfill $\blacksquare$
\end{proof}
\smallskip
\begin{proof}{Proof of Lemma \ref{lem:statevalueapprox}.}
	Let $(\xvec, \val(\xvec)) \in \extval$ and, for any fixed $j \in \indexset$, $\state := \zeros + \sum_{j'=1}^{j-1}\al_{j'} \x_{j'}$ and $\state' = \state + \al_{j} \x_j$. By construction, $\state \in \stateset_j$ and $\state' \in \stateset_{j+1}$. Thus, by property (b), there exists 
	$(\Node, \Node', \x_j) \in \edgeset$ where $\Node \in \nodeset_j$, $\Node' \in \nodeset_{j+1}$, and $\state \in \Node$, $\state' \in \Node'$. Thus, since $j$ is arbitrary, there exists a path in $\net$ associated with labels $\xvec$, and Definition \ref{def:avdd}-(D1) is satisfied. Moreover, for the terminal $\Node \in \nodeset_{\nld+1}$ associated with this path,
	\begin{align*}
		\val(\xvec) = \vale(\Al \xvec) \le \max_{\state \in \Node} \vale(\state) = \edgeval_{\Node}
	\end{align*}
	because $\Al \xvec \in \Node$; i.e., Definition \ref{def:avdd}-(D2) is also satisfied.
	\hfill $\blacksquare$
\end{proof}

\smallskip

\section{Formulations for the Strengthening Procedure} 
\label{app:formulations}
Let $\mathscr{K} := \{1, \dots, K\}$ denote the index set of samples in $\yvecset^{S}$:
\begin{subequations}
\label{model:milpsample}
\begin{align}
	\tilde{\edgeval}_{\node} = \max_{\xvec} 
		\quad& 
		\delta
		\\
	\textnormal{s.t.}
		\quad&
			\delta \le g(\yvec_k)  + \vale(\nodelbvec)\sum_{j \in \{1,\dots,m\}}\gamma_{kj}, &\forall k \in \mathscr{K},
			\label{milpsample:cons:1} \\
		\quad&
			\al_j^{\top} \xvec + \mathbf{b}_j^{\top} \yvec_k  \le b_j - \epsilon\gamma_{kj} + \bar{a}_j(1 - \gamma_{kj})  , &\forall k \in \mathscr{K}, j \in \{1,\dots,m\}
			\label{milpsample:cons:2} \\
		\quad&
			\gamma_{kj} \in \{0,1\}, &\forall k \in \mathscr{K}, j \in \{1,\dots,m\} 
			\label{milpsample:cons:3} \\
		\quad&
			\xvec \in \mathscr{X}(\node).
			\label{milpsample:cons:4}
\end{align}
\end{subequations}
In formulation \eqref{model:milpsample}, $\epsilon > 0$ is a sufficiently small number that represents a feasibility tolerance, and 
$
    \bar{a}_j := \max_{\xvec \in \{0,1\}^{\nld}, \yvec \in \yvecset^{S}} \{\al_{j}^{\top} \xvec + \mathbf{b}^{\top}_j \yvec - b_j\}
$ 
is an upper bound on the component values of $\al_{j}^{\top} \xvec + \mathbf{b}_j^{\top} \yvec - b_j$ for $\xvec \in \{0,1\}^{\nld}$ and $\yvec \in \yvecset^{S}$. The binary variable $\gamma_{kj}$, defined by \eqref{milpsample:cons:3}, is one if the $k$-th sample $\yvec_k$ violates constraint $j \in \{1,\dots,m\}$ for the given $\xvec$, which is imposed by inequality 
\eqref{milpsample:cons:2}; that is, if $\gamma_{kj} = 1$, then $\al_{j}^{\top} \xvec + \mathbf{b}^{\top}_j \yvec  \le b_j - \epsilon < b_j$.  For all samples $\yvec_k$ that are feasible for $\xvec$, i.e., $\gamma_{kj} = 0$ for all $j \in \{1,\dots,m\}$, inequality \eqref{milpsample:cons:1} enforces the outer problem to be bounded by the follower's objective value, $g(\yvec_k)$. Thus, problem \eqref{model:milpsample} attempts to find a vector $\xvec$ that ``blocks'' feasible follower's solution to maximize $\delta$. 

To enforce a leader's decision $\xvec$ to be in an $(\initnode, \node)$-path as imposed by \eqref{milpsample:cons:4}, we can adapt the convex-hull formulation \eqref{model:extform} to consider only paths in $\net(\node)$. Specifically, denoting by $\initnode'$ the root node of $\net(\node)$, we have:
\begin{subequations}
	\label{model:subnet}
	\begin{align}
	\mathscr{X}(\node) := \bigg \{ \xvec 
	\colon \exists \flowvec \in \mathbb{R}^{|\edgeset(\node)|} \,\, \textnormal{s.t.} \,\,
	&\sum_{\edge \in \outedges(\initnode')} \flow_{\edge} = 1, 
			\label{subnet:cons:1} \\	
	&\sum_{\edge \in \outedges(v)} \flow_{\edge} - \sum_{\edge \in \inedges(v)} \flow_{\edge}
	= 0, &&\forall j \in \augindexset\setminus\{1\},  \forall v \in \nodeset_{j}(\node), 
		\label{subnet:cons:2} \\
	&\sum_{ \substack{ \edge = (v, v', 1) \colon v \in \nodeset_{j}(\node)}} \flow_{\edge} = \x_{j}, &&\forall j \in \indexset, 
		\label{subnet:cons:3} \\
	&\xvec \in \{0,1\}^{\nld}, \;\; \flowvec \ge 0 \,\,\, \}.
		\label{subnet:cons:6}
	\end{align}
\end{subequations}

\smallskip

\section{Additional Numerical Details} 
\label{app:numericaldetails}

In this section, we present detailed information about the computational setting, implementation, and additional results and supporting tables. 

\smallskip
\noindent \textit{Computational environment.} The experiments ran on an Intel(R) i7-10850H CPU at 2.70GHz with 32 GB of memory, each single thread. The value networks were implemented in Java. To ensure runtimes and bounds were comparable, all mathematical programs were modeled and solved using the same solver, ILOG IBM CPLEX 12.7, which is the version compatible with \BC{} by \cite{fischetti2017new}.

\smallskip
\noindent \textit{Solvers.} We ran our experiments on \MibS{} v.1.2.1 available at \url{https://github.com/coin-or/MibS}. A copy of \BC{} was obtained directly from authors of \cite{fischetti2017new}. For the value network implementation, all source code and instances will be made available online. 

\smallskip
\noindent \textit{Big-M Calculation.} Large constants $M$ are used in the cuts to provide an upper bound on the follower's value for any leader action. As a result, a valid value of $M$ can be directly computed by solving
\begin{align*}
	M := \max_{\xvec \in \{0,1\}^{\nld}, \yvec \in \{0,1\}^{\nfl}} 
	\left\{
	   \mathbf{d}^{\top} \yvec
		\colon
		\Gx \xvec + \Gy \yvec \ge \hvec, \,\, \Al \xvec + \Bf \yvec \ge \rhsf
	\right\},
	\label{model:miblp_hpr}
\end{align*}
which often produces weak bounds since the follower problem has a minimization sense, whereas the upper bound is obtained by maximizing the follower's objective. We instead use a similar procedure to the value strengthening described in Section \ref{subsec:robust} by noting that any follower value is bounded by 
\begin{align*}
	M
	&:= 
	\max_{\xvec \in \{0,1\}^{\nld}} \val(\xvec) \nonumber \\
	&=
	\max_{\xvec \in \{0,1\}^{\nld}} 
	\min_{\yvec} 
	\left\{ 
		g(\yvec) 
		\,\colon\,
		\Bf \yvec \ge \rhsf - \Al \xvec, \;\; \yvec \in \yvecset
	\right\}. 
\end{align*}
Since this is potentially a very challenging problem, we compute an upper bound as described in Section \ref{subsec:robust}, by using a cutting-plane algorithm and setting the stopping criterion to 50 iterations.    

\smallskip
\noindent \textit{Initial samples for $\yvecset^S$.} We obtain the initial set of samples $\yvecset^S$ from the cutting-plane algorithm described above by storing in $\yvecset^S$ all the distinct follower solutions encountered during the 50 iterations of the algorithm used to compute the $M$ values.    

\smallskip
\noindent \textit{Blocking cuts.} We implement the blocking cuts from \cite{Lozano2017}, where for each element of a sample of follower solutions $\hat\yvec \in \yvecset^S$ and each follower constraint we introduce an additional binary variable $w_{\hat\yvec,j}$ and the following constraints:
\begin{align*}
    &\mathbf{d}^{\top} \yvec \leq \mathbf{d}^{\top}\hat\yvec + \sum_{j \in \{1,\dots,m\}}Mw_{\hat\yvec,j}  &\forall \hat\yvec \in \yvecset^S \\
    &\mathbf{b}_j ^{\top} \hat\yvec + \al_{j} ^{\top} \xvec \leq b_j  -\epsilon w_{\hat\yvec,j} +(1-w_{\hat\yvec,j}) \bar{a}_j &\forall \hat\yvec \in \yvecset^S, j \in \{1,\dots,m\}, 
\end{align*}
where $\epsilon > 0$ is a sufficiently small number that represents a feasibility tolerance, and 
$
    \bar{a}_j := \max_{\xvec \in \{0,1\}^{\nld}, \yvec \in \yvecset^{S}} \{\al_{j}^{\top} \xvec + \mathbf{b}^{\top}_j \yvec - b_j\}
$ 
is an upper bound on the component values of $\al_{j}^{\top} \xvec + \mathbf{b}_j^{\top} \yvec - b_j$ for $\xvec \in \{0,1\}^{\nld}$ and $\yvec \in \yvecset^{S}$. For each element of the sample, if all the $w_{\hat\yvec,j}$ variables are equal to zero, then the inequality $\mathbf{d}^{\top} \yvec \leq \mathbf{d}^{\top}\hat\yvec$ holds. Otherwise, the first constraint is deactivated by the $M$ value and the second constraint ensures that $\al_{j}^{\top} \xvec + \mathbf{b}_j^{\top} \yvec < b_j$ for some $j \in \{1,\dots,m\}$. The initial sample set  $\yvecset^S$ is generated as described above and new cuts are generated iteratively from bilevel infeasible solutions $(\xvec^*, \yvec^*, \z^*)$, by adding an optimal follower response $\hat\yvec$ such that $\mathbf{d}^{\top}\hat\yvec = \val(\xvec^*)$ to $\yvecset^S$.   

\smallskip
\textit{Infeasible Follower's Subproblems.} A terminal node $\Node \in \nodeset_{\nld+1}$ with value $\edgeval_{\Node} = +\infty$ cannot necessarily be eliminated from an approximate value network, given that we are selecting the most restrictive constraint set among all potential states represented by $\Node$. However, further structure from $\xvecset(\cdot)$ or $\yvecset$ can be leveraged to rule out infeasible nodes while constructing the approximate network. In particular, let $\mathbf{V} \in \mathbb{R}^{\ncons}$ be the vector representing the minimum value that each state component must have to ensure the follower's subproblem is feasible, e.g., $V_i := \min_{\yvec \in \yvecset} \{ b_i - (\Bf \yvec)_i \}$ for all $i = 1, \dots, \ncons$, which can be calculated a priori. It follows that a node $(\nodelbvec, \nodeubvec)$ can be eliminated if $\nodelbvec < \mathbf{V}$. Analogous computations can be performed for $\nodeubvec$. We use these conditions in our numerical experiments in \S \ref{sec:numericalstudy}.

\smallskip
\noindent \textit{Additional Tables.} Table \ref{tab:extraoptgap} reports average integer optimality gaps for structured instances discussed in Section \ref{sec:num:structured} and Figure \ref{fig:integer_gap}-(a). Table \ref{tab:extraruntimes} reports average runtimes and number of instance solved for structured instances discussed in Figures \ref{fig:runtimes} and \ref{fig:range_rhs_dd}.   

\begin{table}
	\footnotesize
	\centering
	\caption{Average Integer Optimality Gaps for structured instances (in \%). Values in parenthesis are the standard deviation.}
	\label{tab:extraoptgap}
	\begin{tabular}{|cccr|r|r|r|}
		\hline
		\multicolumn{1}{|c}{$\nld$} & \multicolumn{1}{c}{$\ncons$} & \multicolumn{1}{c}{$\alpha$} & \multicolumn{1}{c}{$\beta$} & \multicolumn{1}{|r|}{\texttt{HPR}} & \multicolumn{1}{|r|}{\DD{}} & \multicolumn{1}{|r|}{\DDMaxMin{}} \\
		\hline
		25 & 1 & 1 & 0.10 & 58.09 (30.45) & 0.00 (0.00) & 0.00 (0.00) \\
		 &  &  & 0.30 & 63.58 (23.03) & 0.00 (0.00) & 0.00 (0.00) \\
		 &  &  & 0.50 & 51.43 (15.24) & 0.00 (0.00) & 0.00 (0.00) \\
		 &  & 3 & 0.10 & 61.54 (26.54) & 0.00 (0.00) & 0.00 (0.00) \\
		 &  &  & 0.30 & 62.40 (27.16) & 0.00 (0.00) & 0.00 (0.00) \\
		 &  &  & 0.50 & 51.99 (13.85) & 0.00 (0.00) & 0.00 (0.00) \\
		 &  & 5 & 0.10 & 67.15 (52.76) & 0.00 (0.00) & 0.00 (0.00) \\
		 &  &  & 0.30 & 68.53 (31.91) & 0.00 (0.00) & 0.00 (0.00) \\
		 &  &  & 0.50 & 48.06 (22.12) & 0.00 (0.00) & 0.00 (0.00) \\
		\hline
		 & 10 & 1 & 0.10 & 195.67 (148.62) & 3.99 (5.90) & 1.37 (3.06) \\
		 &  &  & 0.30 & 143.84 (97.11) & 10.37 (4.65) & 10.37 (4.65) \\
		 &  &  & 0.50 & 93.19 (55.35) & 1.97 (4.23) & 1.97 (4.23) \\
		 &  & 3 & 0.10 & 154.09 (131.03) & 7.12 (15.05) & 0.32 (0.72) \\
		 &  &  & 0.30 & 137.01 (89.99) & 26.56 (13.23) & 26.56 (13.23) \\
		 &  &  & 0.50 & 88.90 (46.36) & 6.95 (5.20) & 6.95 (5.20) \\
		 &  & 5 & 0.10 & 210.12 (239.60) & 23.39 (8.81) & 16.70 (12.09) \\
		 &  &  & 0.30 & 126.73 (77.69) & 32.68 (11.16) & 32.68 (11.16) \\
		 &  &  & 0.50 & 83.43 (45.31) & 9.58 (5.35) & 9.58 (5.35) \\
		\hline
		 & 20 & 1 & 0.10 & 309.32 (350.06) & 1.12 (2.51) & 1.12 (2.51) \\
		 &  &  & 0.30 & 74.34 (27.03) & 8.69 (8.62) & 7.70 (8.08) \\
		 &  &  & 0.50 & 61.55 (20.69) & 5.07 (3.92) & 5.07 (3.92) \\
		 &  & 3 & 0.10 & 182.48 (117.74) & 23.67 (21.08) & 11.61 (10.11) \\
		 &  &  & 0.30 & 67.47 (21.94) & 20.04 (8.82) & 20.04 (8.82) \\
		 &  &  & 0.50 & 56.41 (16.37) & 6.67 (3.15) & 6.67 (3.15) \\
		 &  & 5 & 0.10 & 214.39 (126.11) & 101.34 (79.35) & 28.79 (35.25) \\
		 &  &  & 0.30 & 69.02 (27.12) & 28.22 (15.52) & 28.22 (15.52) \\
		 &  &  & 0.50 & 52.34 (14.71) & 9.00 (4.27) & 9.00 (4.27) \\
		\hline
		50 & 1 & 1 & 0.10 & 33.47 (13.56) & 0.00 (0.00) & 0.00 (0.00) \\
		 &  &  & 0.30 & 34.94 (9.70) & 0.00 (0.00) & 0.00 (0.00) \\
		 &  &  & 0.50 & 30.93 (10.96) & 0.00 (0.00) & 0.00 (0.00) \\
		 &  & 3 & 0.10 & 37.27 (16.05) & 0.00 (0.00) & 0.00 (0.00) \\
		 &  &  & 0.30 & 34.89 (9.71) & 0.00 (0.00) & 0.00 (0.00) \\
		 &  &  & 0.50 & 31.49 (11.33) & 0.00 (0.00) & 0.00 (0.00) \\
		 &  & 5 & 0.10 & 36.61 (12.28) & 0.00 (0.00) & 0.00 (0.00) \\
		 &  &  & 0.30 & 36.81 (11.73) & 0.00 (0.00) & 0.00 (0.00) \\
		 &  &  & 0.50 & 31.69 (10.63) & 0.00 (0.00) & 0.00 (0.00) \\
		\hline
		 & 10 & 1 & 0.10 & 27.74 (11.36) & 5.30 (6.53) & 1.47 (1.41) \\
		 &  &  & 0.30 & 31.19 (8.77) & 5.22 (2.75) & 5.22 (2.75) \\
		 &  &  & 0.50 & 29.92 (8.08) & 3.05 (1.35) & 3.05 (1.35) \\
		 &  & 3 & 0.10 & 26.36 (12.37) & 21.09 (13.24) & 21.09 (13.24) \\
		 &  &  & 0.30 & 33.20 (9.24) & 15.07 (5.09) & 15.07 (5.09) \\
		 &  &  & 0.50 & 29.49 (8.09) & 7.65 (1.97) & 7.65 (1.97) \\
		 &  & 5 & 0.10 & 23.15 (15.71) & 19.87 (15.19) & 19.87 (15.19) \\
		 &  &  & 0.30 & 35.50 (13.50) & 21.70 (8.24) & 21.70 (8.24) \\
		 &  &  & 0.50 & 31.25 (10.46) & 11.92 (5.82) & 11.92 (5.82) \\
		\hline
		 & 20 & 1 & 0.10 & 38.72 (18.56) & 16.93 (9.04) & 7.55 (10.96) \\
		 &  &  & 0.30 & 38.13 (15.19) & 6.63 (7.83) & 6.63 (7.83) \\
		 &  &  & 0.50 & 40.62 (10.89) & 3.10 (1.77) & 3.10 (1.77) \\
		 &  & 3 & 0.10 & 30.47 (11.56) & 20.61 (6.87) & 20.61 (6.87) \\
		 &  &  & 0.30 & 45.43 (26.67) & 19.69 (14.52) & 19.69 (14.52) \\
		 &  &  & 0.50 & 41.30 (10.68) & 9.21 (2.32) & 9.21 (2.32) \\
		 &  & 5 & 0.10 & 33.94 (12.86) & 26.03 (9.50) & 26.03 (9.50) \\
		 &  &  & 0.30 & 45.45 (22.22) & 25.85 (15.47) & 25.85 (15.47) \\
		 &  &  & 0.50 & 50.10 (22.77) & 21.65 (18.87) & 21.65 (18.87) \\
		\hline
	\end{tabular}
\end{table}

\begin{table}
	\caption{Number of instances solved within the one-hour time limit (referred to as \#) and average runtime for each exact method. Values in parenthesis are the standard deviation. The averages only consider instances that are solved. }
	\label{tab:extraruntimes}
	\centering
	\footnotesize
	\begin{tabular}{|cccc|c r |r r |r r|}
		\hline
		& & & & \multicolumn{2}{c|}{\MibS{}} & \multicolumn{2}{c|}{\BC{}} & \multicolumn{2}{c|}{\DDMaxMinSep{}} \\
		\hline
		$\nld$ & $\ncons$ & $\alpha$ & $\beta$ & \# & Runtime (s) & \multicolumn{1}{c}{\#} & Runtime (s) & \# & Runtime (s) \\
		\hline
		25 & 1 & 1 & 0.10 & 5 & 1.50 (1.31) & 5 & 0.05 (0.08) & 5 & 0.50 (0.19) \\
		 &  &  & 0.30 & 5 & 8.82 (12.30) & 5 & 0.11 (0.17) & 5 & 0.63 (0.14) \\
		 &  &  & 0.50 & 5 & 5.50 (8.62) & 5 & 0.09 (0.19) & 5 & 0.65 (0.06) \\
		 &  & 3 & 0.10 & 5 & 4.08 (5.12) & 5 & 0.21 (0.32) & 5 & 0.65 (0.11) \\
		 &  &  & 0.30 & 5 & 107.17 (203.05) & 5 & 0.76 (0.24) & 5 & 0.57 (0.08) \\
		 &  &  & 0.50 & 5 & 24.93 (28.20) & 5 & 0.45 (0.95) & 5 & 0.87 (0.21) \\
		 &  & 5 & 0.10 & 5 & 6.52 (8.36) & 5 & 0.60 (0.34) & 5 & 0.67 (0.12) \\
		 &  &  & 0.30 & 5 & 105.13 (96.31) & 5 & 1.88 (1.91) & 5 & 0.68 (0.10) \\
		 &  &  & 0.50 & 5 & 5.77 (4.99) & 5 & 0.44 (0.63) & 5 & 0.97 (0.25) \\
		\hline
		 & 10 & 1 & 0.10 & 5 & 699.97 (405.55) & 5 & 5.18 (4.46) & 5 & 7.67 (2.11) \\
		 &  &  & 0.30 & 0 &          - & 4 & 669.50 (634.54) & 5 & 15.27 (4.41) \\
		 &  &  & 0.50 & 0 &          - & 4 & 1,326.50 (1,583.01) & 5 & 10.99 (2.69) \\
		 &  & 3 & 0.10 & 4 & 582.89 (475.63) & 5 & 6.32 (2.96) & 5 & 7.73 (1.85) \\
		 &  &  & 0.30 & 0 &          - & 5 & 1,568.60 (1,094.97) & 5 & 47.38 (48.65) \\
		 &  &  & 0.50 & 0 &          - & 5 & 1,397.20 (1,090.95) & 5 & 14.76 (7.15) \\
		 &  & 5 & 0.10 & 5 & 793.65 (486.72) & 5 & 7.80 (4.09) & 5 & 8.09 (0.59) \\
		 &  &  & 0.30 & 0 &          - & 4 & 1,520.75 (1,007.80) & 3 & 124.49 (164.66) \\
		 &  &  & 0.50 & 0 &          - & 5 & 1,565.80 (1,175.41) & 5 & 35.56 (26.39) \\
		\hline
		 & 20 & 1 & 0.10 & 5 & 1,015.74 (851.39) & 5 & 4.06 (3.84) & 5 & 7.72 (2.98) \\
		 &  &  & 0.30 & 0 &          - & 4 & 2,298.00 (1,172.91) & 5 & 27.02 (11.23) \\
		 &  &  & 0.50 & 0 &          - & 1 & 54.00 (-) & 5 & 16.70 (4.84) \\
		 &  & 3 & 0.10 & 5 & 919.04 (718.64) & 5 & 18.12 (18.78) & 5 & 7.49 (1.47) \\
		 &  &  & 0.30 & 0 &          - & 3 & 2,259.00 (610.34) & 5 & 349.25 (524.69) \\
		 &  &  & 0.50 & 0 &          - & 1 & 140.00 (-) & 5 & 42.12 (36.24) \\
		 &  & 5 & 0.10 & 5 & 1,380.23 (736.77) & 5 & 24.80 (18.31) & 5 & 8.18 (0.79) \\
		 &  &  & 0.30 & 0 &          - & 2 & 2,392.50 (1,154.71) & 4 & 906.33 (1,213.53) \\
		 &  &  & 0.50 & 0 &          - & 2 & 951.50 (410.83) & 5 & 155.44 (219.29) \\
		\hline
		50 & 1 & 1 & 0.10 & 5 & 24.90 (17.83) & 5 & 0.94 (1.20) & 5 & 0.76 (0.21) \\
		 &  &  & 0.30 & 3 & 716.73 (740.28) & 5 & 50.82 (87.74) & 5 & 1.25 (0.17) \\
		 &  &  & 0.50 & 5 & 106.85 (87.33) & 5 & 0.61 (0.54) & 5 & 1.17 (0.11) \\
		 &  & 3 & 0.10 & 5 & 79.14 (75.63) & 5 & 4.68 (4.05) & 5 & 0.91 (0.17) \\
		 &  &  & 0.30 & 1 & 181.68 (-) & 5 & 25.92 (35.62) & 5 & 1.27 (0.16) \\
		 &  &  & 0.50 & 3 & 227.10 (117.59) & 5 & 31.16 (66.44) & 5 & 1.39 (0.15) \\
		 &  & 5 & 0.10 & 5 & 252.97 (301.08) & 5 & 7.00 (6.82) & 5 & 1.10 (0.35) \\
		 &  &  & 0.30 & 2 & 1,388.13 (98.68) & 5 & 218.80 (409.25) & 5 & 1.52 (0.25) \\
		 &  &  & 0.50 & 3 & 208.68 (215.71) & 5 & 3.90 (4.22) & 5 & 1.61 (0.28) \\
		\hline
		 & 10 & 1 & 0.10 & 2 & 1,573.96 (506.81) & 5 & 171.80 (147.55) & 5 & 8.70 (2.22) \\
		 &  &  & 0.30 & 0 &          - & 0 &          - & 5 & 181.81 (333.46) \\
		 &  &  & 0.50 & 0 &          - & 0 &          - & 5 & 22.40 (9.25) \\
		 &  & 3 & 0.10 & 3 & 2,140.22 (1,229.99) & 5 & 566.80 (562.57) & 5 & 35.90 (24.41) \\
		 &  &  & 0.30 & 0 &          - & 0 &          - & 1 & 1,173.00 (-) \\
		 &  &  & 0.50 & 0 &          - & 0 &          - & 5 & 1,026.87 (1,035.24) \\
		 &  & 5 & 0.10 & 3 & 1,233.25 (2,023.04) & 4 & 443.75 (396.90) & 5 & 105.62 (153.60) \\
		 &  &  & 0.30 & 0 &          - & 0 &          - & 0 &          - \\
		 &  &  & 0.50 & 0 &          - & 1 & 2,793.00 (-) & 2 & 2,119.96 (1,184.01) \\
		\hline
		 & 20 & 1 & 0.10 & 2 & 1,989.04 (947.66) & 5 & 283.80 (184.56) & 5 & 12.08 (2.71) \\
		 &  &  & 0.30 & 0 &          - & 0 &          - & 5 & 126.33 (205.87) \\
		 &  &  & 0.50 & 0 &          - & 0 &          - & 5 & 65.18 (56.05) \\
		 &  & 3 & 0.10 & 2 & 1,848.39 (111.91) & 5 & 488.60 (308.34) & 5 & 68.28 (55.67) \\
		 &  &  & 0.30 & 0 &          - & 0 &          - & 0 &          - \\
		 &  &  & 0.50 & 0 &          - & 0 &          - & 3 & 1,620.50 (1,299.51) \\
		 &  & 5 & 0.10 & 1 & 1,045.12 (-) & 5 & 902.60 (425.40) & 5 & 308.54 (302.09) \\
		 &  &  & 0.30 & 0 &          - & 0 &          - & 0 &          - \\
		 &  &  & 0.50 & 0 &          - & 0 &          - & 1 & 908.67 (-) \\
		\hline
		\end{tabular}		
\end{table}

\end{APPENDICES}

\end{document}